\newcommand{\ra}{\rightarrow}
\newcommand{\hra}{\hookrightarrow}
\newcommand{\thra}{\twoheadrightarrow}
\newcommand{\lra}{\longrightarrow}
\newcommand{\id}{\mathop{\mathrm{id}}\nolimits}
\providecommand{\GL}[2]{{\rm GL}_{#1}(#2)}
\providecommand{\rep}[2]{\rho^{ }_{{#1},{#2}}}
\providecommand{\mrep}[2]{\bar{\rho}^{ }_{{#1},{#2}}}
\providecommand{\End}[2]{{\rm End^{ }}_{#1}(#2)}
\providecommand{\Gal}[2]{{\rm Gal}(#1/#2)}
\providecommand{\Ker}[1]{{\rm Ker}(#1)}
\providecommand{\Frob}[2]{ {\rm Frob}_{#1}^{#2 }}
\providecommand{\Fr}[2]{  \mathrm{Fr}_{ {#1} | {#2}  }  }
\providecommand{\FC}[3]{   \left[    \frac{{#1}/{#2}}{#3}\right] }
\edef\csname rm\@Alph\@tempcnta\endcsname{\noexpand\mathrm{\@Alph\@tempcnta}}
\edef\csname s\@Alph\@tempcnta\endcsname{\noexpand\mathscr{\@Alph\@tempcnta}}
\edef\csname b\@Alph\@tempcnta\endcsname{\noexpand\mathbb{\@Alph\@tempcnta}}
\edef\csname c\@Alph\@tempcnta\endcsname{\noexpand\mathcal{\@Alph\@tempcnta}}
\edef\csname rm\@alph\@tempcnta\endcsname{\noexpand\mathrm{\@alph\@tempcnta}}
\theoremstyle{plain}	
\newtheorem{thm}{Theorem}[section]
\newtheorem{prop}[thm]{Proposition}
\newtheorem{lem}[thm]{Lemma}
\newtheorem{conj}[thm]{Conjecture}
\newtheorem{prob}[thm]{Question}
\theoremstyle{definition}
\newtheorem{definition}[thm]{Definition}
\newtheorem{rem}[thm]{Remark}
\newtheorem{ex}[thm]{Example}
\title{A function field analogue of the Rasmussen-Tamagawa conjecture: The Drinfeld module case}
\date{}
\author{Yoshiaki Okumura}
\begin{document}
\maketitle
\thispagestyle{empty}

\begin{abstract}
In the arithmetic of  function fields, Drinfeld modules play the role that elliptic curves play in the arithmetic of number fields.
The aim of this paper is to study a non-existence problem of Drinfeld modules with constrains on  torsion points at places with large degree.
This  is motivated by a conjecture 
of Christopher  Rasmussen and Akio  Tamagawa on the non-existence of  abelian varieties over number fields with some arithmetic constraints.
We prove the  non-existence of  Drinfeld modules satisfying Rasmussen-Tamagawa type conditions  in the case where the  inseparable degree of base fields is not   divisible by the rank of Drinfeld modules.
Conversely if the rank divides the inseparable degree, then  we give an example of Drinfeld modules satisfying Rasmussen-Tamagawa-type conditions.
\end{abstract}

\pagestyle{myheadings}
\markboth{Y.\ Okumura}{Non-existence of certain Drinfeld modules}

\renewcommand{\thefootnote}{\fnsymbol{footnote}}
\footnote[0]{2010 Mathematics Subject Classification:\ Primary 11G09;\ Secondary 11R58.}
\renewcommand{\thefootnote}{\arabic{footnote}}
\renewcommand{\thefootnote}{\fnsymbol{footnote}}
\footnote[0]{Keywords:\ Drinfeld\ modules;\ Rasmussen-Tamagawa\ conjecture;\ Galois representations.}
\renewcommand{\thefootnote}{\arabic{footnote}}


\section{Introduction}

Let $p$ be a  prime number  and  fix  some $p$-power $q=p^m$.
Write   $A:=\bF_q[t]$  for the polynomial ring in one variable $t$ over $\bF_q$ and set $F:=\bF_q(t)$.
Let $K$ be a finite extension of $F$.
In this article, we identify every monic irreducible element $\pi$ of $A$ with  the corresponding  finite place of $F$.
Write $\bF_\pi=A/\pi A$ for the residue field at $\pi$ and set $q_\pi:=\#\bF_\pi=q^{\deg (\pi)}$.

Let $r$ be a positive integer and $\pi$  a monic irreducible element of  $A$.
Define $\sD(K,r,\pi)$ to be the set of isomorphism classes $[\phi]$ of rank-$r$ Drinfeld modules
over  $K$ which satisfy the following two conditions:
\begin{itemize}
\setlength{\leftskip}{0.5cm}
\item[(D1)] $\phi$ has good reduction at any finite places of $K$ not lying above $\pi$,
\item[(D2)] the mod $\pi$ representation $\mrep{\phi}{\pi}:G_K \ra \GL{r}{\bF_\pi}$ attached to $\phi$
is of the form
\[
\mrep{\phi}{\pi} \simeq
\begin{pmatrix}
\chi_\pi^{i_1} & * & \cdots & *\\
 & \chi_\pi^{i_2}  & \ddots  & \vdots \\
 &  & \ddots  & * \\
 & &  &\chi_\pi^{i_{r}}  
\end{pmatrix},
\]
where $\chi_\pi$ is the mod $\pi$ Carlitz character (see Example \ref{Carlitz})  and $0 \leq i_1, \ldots,i_r \leq  q_\pi-1$ are integers.
\end{itemize} 
Consider the following:

\begin{prob}\label{conj}
Does there exist a positive constant $C>0$ depending only on $K$ and $r$ which satisfies the following:
if  $\deg(\pi)>C$, then  the set $\sD(K,r,\pi)$ is empty ?
\end{prob}

The motivation of this question is a non-existence conjecture on abelian varieties stated by Rasmussen and Tamagawa \cite{RT1}.
Let $k$ be a finite extension of $\bQ$ and $g$ a positive integer. 
For a prime number $\ell$, denote by $\tilde{k}_\ell$  the maximal pro-$\ell$ extension of $k(\mu_\ell)$ which is unramified outside $\ell$, where $\mu_\ell=\mu_\ell(\bar{k})$ is the set of $\ell$-th roots of unity.
For an abelian variety $X$ over $k$, write $k(X[\ell^\infty]):=k(\bigcup_{n \geq 1} X[\ell^n])$ for the field generated by all $\ell$-power torsion points of $X$.
Define  $\sA(k,g,\ell)$ to be  the set of isomorphism classes $[X]$ of $g$-dimensional abelian varieties over $k$ which satisfy the following equivalent conditions:
\begin{itemize}
\setlength{\leftskip}{0.5cm}
\item[(RT-1)]  $k(X[\ell^\infty]) \subseteq \tilde{k}_\ell$,
\item[(RT-2)] $X$ has good reduction at any finite place of $k$ not lying above $\ell$ and $k(X[\ell])/k(\mu_\ell)$ is an $\ell$-extension,
\item[(RT-3)]  $X$ has good reduction at any finite place of $k$ not lying above $\ell$ and 
the mod $\ell$ representation $\bar{\rho}_{X,\ell}:G_k\ra\GL{\bF_\ell}{X[\ell]}\simeq \GL{2g}{\bF_\ell}$ is of the form
\[
\bar{\rho}_{X,\ell} \simeq
\begin{pmatrix}
\chi^{i_1}_\ell & * & \cdots & *\\
 & \chi^{i_2}_\ell  & \ddots  & \vdots \\
 &  & \ddots  & * \\
 & &  &\chi^{i_{2g}}_\ell  
\end{pmatrix} ,
\]
where $\chi_\ell$ is the mod $\ell$ cyclotomic character.
\end{itemize}
These   conditions  come from the study of a question of Ihara \cite{Ihara} related with the kernel of the canonical outer Galois representation of the pro-$\ell$ fundamental group of 
$\bP^1 {\setminus} \{ 0,1,\infty\} $; see \cite{RT1}.
Rasmussen and Tamagawa conjectured the following:

\begin{conj}[Rasmussen and Tamagawa {\cite[Conjecture 1]{RT1}}] \label{RTconj}
The set $\sA(k,g,\ell)$ is empty  for any $\ell$ large enough.
\end{conj}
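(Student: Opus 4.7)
The plan is to exploit the restrictive shape of the mod-$\ell$ Galois representation $\bar{\rho}_{X,\ell}$ imposed by (RT-3), combined with good reduction away from $\ell$, to force any candidate $X$ to be so arithmetically special that its existence can be ruled out for all sufficiently large $\ell$.

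First, I would analyze the local representation at primes above $\ell$. Since (RT-3) forces the semisimplification of $\bar{\rho}_{X,\ell}$ to be a direct sum of powers of the mod-$\ell$ cyclotomic character, the representation at $\ell$ is built out of twists of cyclotomic characters up to semisimplification. Combining this with good reduction outside $\ell$, I would invoke Fontaine--Laffaille theory (or $(\varphi,\Gamma)$-modules) to constrain the Hodge--Tate--Sen weights and thereby pin down the shape of $\bar{\rho}_{X,\ell}$ locally, in the spirit of Fontaine's discriminant bounds. In parallel, for each prime $\mathfrak{p}$ of $k$ of good reduction, (RT-3) forces $a_\mathfrak{p}(X) \equiv \sum_{j}(N\mathfrak{p})^{i_j}\pmod{\ell}$, and the Weil bound $|a_\mathfrak{p}(X)|\leq 2g\sqrt{N\mathfrak{p}}$ pins the trace down exactly once $\ell$ is large compared to $N\mathfrak{p}$.

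Second, I would feed these constraints into Faltings' finiteness theorem, which yields only finitely many isogeny classes of abelian varieties of dimension $g$ over $k$ with good reduction outside $\ell$ (for each fixed $\ell$). In the non-CM case, image-of-Galois largeness theorems \`a la Serre and Pink should give a direct contradiction, since the monodromy of $\bar{\rho}_{X,\ell}$ is forced far from the generic symplectic image. For each remaining isogeny class, the exact Frobenius traces recovered in the previous step should match those of an explicit CM abelian variety, and one could then hope to apply Chebotarev density together with the class field theoretic description of CM torsion fields to force the cyclotomic-character condition on $k(X[\ell])/k(\mu_\ell)$ to fail.

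The main obstacle, and the reason the conjecture remains open in general, is the CM case: one needs a bound on $\ell$ depending only on $k$ and $g$, and the ramification at $\ell$ of the fields cut out by CM torsion is controlled only under GRH or in restricted settings (as in work of Rasmussen--Tamagawa for $k=\mathbb{Q}$, and later of Bourdon, Arai, and Ozeki). I would therefore expect a GRH-conditional proof to be accessible via the framework above, while an unconditional statement over a general number field will require substantially new input about the arithmetic of CM abelian varieties and the associated Hecke characters over $k$.
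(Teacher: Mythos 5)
This statement is Conjecture~\ref{RTconj}, which the paper \emph{does not prove}: it is quoted verbatim from Rasmussen and Tamagawa \cite{RT1} as an open conjecture, cited only to motivate the function field analogue that is the actual subject of the paper. There is therefore no proof of the statement in the paper against which to compare your proposal, and your proposal is not a proof either --- which, to your credit, you acknowledge in your final paragraph.

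A few substantive remarks on the sketch itself. The pieces you assemble (Weil bounds forcing exact Frobenius traces once $\ell \gg N\mathfrak{p}$, Faltings' finiteness for fixed $\ell$, Chebotarev density, and a GRH-dependent effectivization) do indeed match the broad strategy of the known conditional results of Rasmussen--Tamagawa and the special cases treated by Ozeki, Bourdon, Arai, and Lombardo. However, the second step as written overreaches: large-image theorems of Serre/Pink type for $\bar{\rho}_{X,\ell}$ give bounds on $\ell$ that depend on the individual abelian variety $X$, not a bound uniform over all $X$ of dimension $g$ over $k$; extracting a uniform lower bound on $\ell$ from such results is precisely the hard part, and in the non-CM case it is achieved unconditionally in \cite{RT1,RT2,Ozeki} only for semistable or low-dimensional cases, and in general only under GRH via an effective Chebotarev theorem. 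You correctly identify the CM case and the ramification of CM torsion fields as the genuine obstruction to an unconditional statement. In short: this is a reasonable heuristic outline of why the conjecture is believed and where the difficulty sits, but it is a conjecture, not a theorem, both in \cite{RT1} and in the present paper, and nothing in your sketch changes that status.
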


\noindent
Since the set $\sA(k,g,\ell)$ is always finite (see Section 5, or \cite{RT1}), the conjecture is equivalent to saying that the union $\bigcup_\ell \sA(k,g,\ell)$ is also finite.
For example, the following cases are known: 
\begin{itemize}
\item $k=\bQ$ and $g=1$ \cite[Theorem 2]{RT1},
\item $k=\bQ$ and $g=2,3$ \cite[Theorem 7.1 and 7.2]{RT2},
\item for abelian varieties with everywhere semistable reduction \cite[Corollary 4.5]{Ozeki} and \cite[Theorem 3.6]{RT2},
\item for abelian varieties with abelian Galois representations  \cite[Corollary 1.3]{OzekiCM},
\item for QM abelian surfaces over certain imaginary quadratic fields \cite[Theorem 9.3]{Arai}. 
\end{itemize}
We notice that,
under the assumption of the Generalized Riemann Hypothesis (GRH) for  Dedekind zeta functions of number fields, the conjecture is true in general \cite[Theorem 5.1]{RT2}.
The key tool of this proof is  the effective version of Chebotarev density theorem for number fields, which holds under GRH.
Rasmussen and Tamagawa  also state the ``uniform version'' of the conjecture  \cite[Conjecture 2]{RT2}, which says that one can take a lower bound of $\ell$ satisfying $\sA(k,g,\ell)=\varnothing$ depending only on the degree $[k:\bQ]$ and $g$.
For instance, the uniform version of the conjecture for CM abelian varieties is proved by Bourdon \cite[Corollary 1]{Bou} and Lombardo \cite[Theorem 1.3]{L}.
 Under GRH, the uniform version conjecture is true if $[k:\bQ]$ is odd \cite[Theorem 5.2]{RT2}.

The arithmetic properties of Drinfeld modules are similar to those of  elliptic curves over number fields.
Under this analogy, the  condition (D1)$+$(D2)  can be regarded as a natural translation of the condition (RT-3).
In fact, we  have also Drinfeld module versions of (RT-1) and (RT-2); 
we will show that the set  $\sD(K,r,\pi)$ is characterized  by  three equivalent conditions as in the abelian variety case (Proposition \ref{DRTcondi}).
The main purpose of this paper is to give a complete answer to Question \ref{conj}:

\begin{thm}[{Theorem \ref{main1} and Theorem \ref{main2}}]\label{MM}
If  $r$ does not divide the inseparable degree  $[K:F]_{\rm i}$ of $K/F$, then the set $\sD(K,r,\pi)$ is empty for any $\pi$ whose degree is large enough.
\end{thm}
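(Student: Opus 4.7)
The plan is to adapt the Rasmussen--Tamagawa strategy from the number field setting to Drinfeld modules, using the Drinfeld analogue of the Weil bound for Tate modules, together with the (unconditional) effective Chebotarev density theorem available in the function field setting, where Weil's Riemann Hypothesis provides unconditionally what GRH only conjecturally gives over number fields.

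First, I would set up the basic trace formula. Suppose $[\phi]\in\sD(K,r,\pi)$, and let $w$ be a place of $K$ of good reduction for $\phi$ with $w\nmid\pi$. The characteristic polynomial $P_w(X)\in A[X]$ of the geometric Frobenius $\Frob{w}{}$ acting on $T_\pi\phi$ has roots of $\infty$-adic absolute value $q_w^{1/r}$ by Drinfeld's theorem, so its trace $a_w\in A$ satisfies $\deg_t(a_w)\leq \deg(w)/r$. Meanwhile, condition (D2) and passage to the semisimplification yields
\[
a_w\equiv\sum_{j=1}^{r}\chi_\pi(\Frob{w}{})^{i_j}\pmod\pi.
\]
A direct computation on the Carlitz module shows $\chi_\pi(\Frob{w}{})\equiv v^{f_{w/v}}\pmod\pi$, where $v$ is the place of $F$ below $w$ and $f_{w/v}$ is the residue degree.

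Second, I would establish a determinant formula. Via the top exterior power construction (which sends $\phi$ to a rank-$1$ Drinfeld $A$-module over $K$), $\det\mrep{\phi}{\pi}$ equals a specific power of $\chi_\pi|_{G_K}$ up to an unramified twist, with the exponent governed by invariants of $K/F$. Comparing with the product $\prod_j \chi_\pi^{i_j}$ coming from (D2) yields a congruence $\sum_j i_j\equiv c\pmod{q_\pi-1}$ for an integer $c$ encoding $[K:F]$; tracking the contribution of the inseparable part to $c$ is precisely where $[K:F]_{\rm i}$ enters.

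Third, I would carry out the arithmetic endgame. Choose a convenient linear place of $F$, say $v=t$, and use the unconditional function-field effective Chebotarev theorem to produce places $w$ of $K$ above $v$ with controlled residue degree; for such $w$ the degree $f_{w/v}$ is forced to be divisible by (a local counterpart of) $[K:F]_{\rm i}$. For $\deg\pi$ sufficiently large in terms of $[K:F]$ and $r$, the trace congruence upgrades to an equality in $A$,
\[
a_w=\sum_{j=1}^{r} v^{i_j f_{w/v}},
\]
because both sides admit canonical representatives in $A$ of degree strictly less than $\deg\pi$. Comparing degrees via the Weil bound, combined with the determinant congruence, then forces sharp arithmetic constraints on the multiset $\{i_j\}$.

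The main obstacle, and the heart of the argument, is the final combinatorial step: making the analysis of admissible tuples $(i_1,\ldots,i_r)$ precise enough that the divisibility $r\mid[K:F]_{\rm i}$ emerges as the sharp threshold for a contradiction. I expect this to require a case distinction based on how many $i_j$'s are ``small'' (bounded polynomially in $\deg(w)$) versus ``large'' (close to $q_\pi-1$), together with an explicit determinant formula isolating the inseparable contribution. Secondary technical points I would need to address are the treatment of non-Galois $K/F$ (likely by passing to the Galois closure before invoking Chebotarev) and the behavior of $\mrep{\phi}{\pi}$ at the possibly bad place $\pi$, which is why condition (D1) permits the exception only there.
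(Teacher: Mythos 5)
Your proposal has the right general flavor — Weil bounds on Frobenius eigenvalues, congruences against powers of the Carlitz character, and effective Chebotarev in the function field setting — but it is missing the central idea of the paper and would not close as written. The paper's proof splits into two genuinely different cases depending on whether $r$ is a $p$-power, and your outline addresses neither case adequately. Let me be concrete.

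The decisive ingredient you omit is the analysis of tame inertia weights at places $u$ of $K$ lying \emph{above} $\pi$, carried out via Tate uniformization and Gardeyn's theory of analytic $\tau$-sheaves. Proposition~\ref{stable} and Proposition~\ref{tame} establish that after a controlled extension $L_0/K_u$ the tame inertia weights $j_s$ of $\mrep{\phi}{\pi}$ all lie in $[0,e_u]$, where $e_u$ divides $e_{u|\pi}\cdot C_1(q,r)$ and $C_1$ is prime to $p$. Feeding this into the congruence $i_s\cdot e_u\equiv j_s\pmod{q_\pi-1}$ and comparing with the Weil bound $|\alpha_s^{e_u}|=q_v^{e_u/r}$ at a degree-one place, Proposition~\ref{cong} forces $j_s=e_u/r$ for every $s$, hence $r\mid e_u$. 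Since $C_1(q,r)$ is prime to $p$, when $r=p^\nu$ one gets $r\mid e_{u|\pi}$, which (for $\pi$ unramified in $K_{\rm s}$) forces $r\mid[K:F]_{\rm i}$. This argument works entirely locally at $\pi$ and does \emph{not} use Chebotarev; indeed the paper explicitly notes that the Chebotarev strategy fails for the $p$-power case. Your proposal never looks at the bad place $\pi$, only at good places, so it cannot detect the inseparable degree.

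For the non-$p$-power case $r=r_0p^\nu$ with $r_0>1$ prime to $p$, the paper does use effective Chebotarev, as you suggest. But the details matter: Proposition~\ref{mv} works with the coefficient $a_{v,p^\nu}$ of $T^{r-p^\nu}$ in $P_v(T)$, i.e.\ the $p^\nu$-th elementary symmetric function of the Frobenius eigenvalues, precisely because $\binom{r}{p^\nu}$ is not divisible by $p$, so the mod $\pi$ reduction does not vanish. Your proposal uses the trace $a_w$, i.e.\ the first symmetric function, and the analogous coefficient $\binom{r}{1}=r\equiv 0\pmod{p}$ when $p\mid r$, so the naive trace congruence is useless in exactly the interesting case. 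The student's ``determinant formula'' heuristic (compare $\sum_j i_j$ with a number computed from $[K:F]$) is close in spirit to Lemma~\ref{e}(2), but the paper derives that congruence from the inertia weight computation $j_s=e_u/r$ rather than from an exterior power, and you would still need the local input at $\pi$ to get there. Finally, you acknowledge that the ``final combinatorial step'' is unresolved; that step is exactly where the paper's Propositions~\ref{cong} and~\ref{mv} do the real work, so the gap is essential, not secondary.
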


\begin{thm}[{Theorem \ref{nonempty}}]\label{NE}
If $r$ divides $[K:F]_{\rm i}$, then the set $\sD(K,r,\pi)$ is never empty for any $\pi$. 
\end{thm}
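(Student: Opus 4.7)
The plan is, given any $K$ with $r \mid [K:F]_{\rm i}$ and any monic irreducible $\pi$, to exhibit an explicit member of $\sD(K,r,\pi)$ by realizing a rank-$r$ Drinfeld $A$-module over $K$ as the ``restriction of scalars'' of a rank-one Carlitz-type Drinfeld module over a purely inseparable enlargement of $A$. Concretely, I will take $u := t^{1/r} \in K$ and let $\phi$ be the rank-$r$ Drinfeld $A$-module over $K$ determined by $\phi_t := (u+\tau)^r \in K\{\tau\}$; the same $\phi$ will work uniformly in $\pi$.

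First I would verify that $u = t^{1/r}$ does lie in $K$. Writing $K_{\mathrm{s}}$ for the maximal separable subextension of $K/F$, one has $[K_{\mathrm{s}}:K_{\mathrm{s}}^p]=p$ since $K_{\mathrm{s}}$ is a function field of transcendence degree one over $\bF_q$, and hence the unique purely inseparable extension of $K_{\mathrm{s}}$ of degree $p^n$ is $K_{\mathrm{s}}^{1/p^n}$. Applying this to $K/K_{\mathrm{s}}$, which is purely inseparable of degree $[K:F]_{\rm i}$, yields $K = K_{\mathrm{s}}^{1/[K:F]_{\rm i}} \supseteq K_{\mathrm{s}}^{1/r} \supseteq F^{1/r} \ni t^{1/r}$.

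Next I would set $A' := \bF_q[u]$ and let $\psi$ be the rank-one Carlitz-type Drinfeld $A'$-module over $K$ defined by $\psi_u = u+\tau$. Via the inclusion $A \hookrightarrow A'$, $t \mapsto u^r$, I may view $\psi$ as a Drinfeld $A$-module $\phi$ over $K$ with $\phi_t = \psi_{u^r} = (u+\tau)^r$ of rank $r$. Condition (D1) is then immediate: all coefficients of $\phi_t$ lie in $\bF_q[u]$, the integral closure of $\bF_q[t]$ in $\bF_q(u) \subseteq K$, and the leading coefficient is the unit $1$; hence $\phi$ has good reduction at every finite place of $K$.

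For condition (D2), the factorization $\pi(u^r) = P(u)^r$ in $A'$, where $P$ is the monic irreducible of degree $\deg\pi$ obtained by extracting $r$-th roots of the coefficients of $\pi$ (possible since $r=p^a$ and $\bF_q$ is perfect), identifies $\phi[\pi] = \psi[P^r]$; the $A'$-module chain
\[
0 \;\subsetneq\; \psi[P] \;\subsetneq\; \psi[P^2] \;\subsetneq\; \cdots \;\subsetneq\; \psi[P^r]=\phi[\pi]
\]
is $G_K$-stable with each successive quotient one-dimensional over $A'/P \cong A/\pi$, and $G_K$ acts on each quotient by the Carlitz character $\chi_P$ of $\psi$ modulo $P$. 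The main step, which I expect to be the principal obstacle, is to identify $\chi_P$ as a power of $\chi_\pi$: using the Carlitz reciprocity formulas for both $\chi_P$ and $\chi_\pi$, together with the identity $v(u^r) = W(u)^r$ in $A'$ (where $W \in A'$ is the prime above an auxiliary $v \in A$) and the canonical isomorphism $A/\pi \cong A'/P$ sending $\bar t \mapsto \bar u^r$, one deduces $\chi_\pi = \chi_P^r$. Since $r$ is a $p$-power and $q_\pi - 1$ is prime to $p$, $r$ is invertible modulo $q_\pi-1$; choosing $s$ with $rs \equiv 1 \pmod{q_\pi-1}$ yields $\chi_P = \chi_\pi^s$. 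Consequently $\mrep{\phi}{\pi}$ is upper triangular with constant diagonal $\chi_\pi^s$, confirming $\phi \in \sD(K,r,\pi)$.
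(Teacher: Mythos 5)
Your construction is exactly the paper's: both take $u=t^{1/r}\in K$ (justified the same way, via $[K_{\rm s}:K_{\rm s}^p]=p$, which is the paper's Lemma~\ref{ps}), define the rank-one Drinfeld module $u+\tau$ for the shifted $A$-field structure, and pass to the rank-$r$ module $\Phi_t=(u+\tau)^r$ with the filtration by $\cC'[\hat\pi^s]$ (your $\psi[P^s]$). The only divergence is in identifying the character on the one-dimensional graded pieces: the paper computes it directly by $r$-th--powering torsion points (Lemma~\ref{relation} and the chain $\sigma(\lambda)^r=\cC_{a_\sigma}(\delta)=\bigl(\cC'_{\hat a_\sigma}(\lambda)\bigr)^r$), whereas you compare the two Carlitz characters $\chi_\pi$ and $\chi_P$ over $F$ and $F'=\bF_q(u)$ via their reciprocity laws on Frobenii plus Chebotarev, getting $\chi_P^r=\chi_\pi$ and hence $\chi_P=\chi_\pi^s$ with $rs\equiv1\pmod{q_\pi-1}$. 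Both routes give the same exponent, and your restriction-of-scalars framing is a natural repackaging of the paper's $a\mapsto\hat a$ bookkeeping; your argument is correct.
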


The proof of Theorem \ref{MM} consists of  the two cases: (i) $r=p^\nu$, and (ii) $r=r_0\cdot p^{\nu}$ for some $r_0>1$ which is prime to $p$.
In the case (ii), 
we use the effective version of Chebotarev density theorem for function fields proved by Kumer and Scherk \cite{KS}, which is a modification of the strategy in \cite{RT2}.
In this case, the  uniform version, which is an analogue of \cite[Theorem 5.2]{RT2}, is also shown (Theorem \ref{uniform}).
However, the same argument  dose not work well in the case (i).
The proof in the case (i) is provided by observations about  the tame inertia weights of $\mrep{\phi}{\pi}$ 
for any $[\phi] \in \sD(K,r,\pi)$.
This technique is used in \cite{Ozeki} and \cite{RT2}.

There are difference between the number field setting and the function field setting.
Indeed if 
 $r$ divides $[K:F]_{\rm i}$ (for which there is no number field setting), then we construct a rank-$r$ Drinfeld module $\Phi$ over $K$ satisfying (D1) and (D2) for any $\pi$ and prove Theorem \ref{NE}.
This means that in this case a Drinfeld module analogue of the Rasmussen-Tamagawa conjecture is not true 
although the original  conjecture is generally true under GRH.

The organization of the paper is as follows.
In Section 2, after reviewing several basic facts on Drinfeld modules, we study the ramification of Galois representations coming from Drinfeld modules, whose consequences are needed  in the next section.
In Section 3, for any $[\phi] \in \sD(K,r,\pi)$, an important integer $e_\phi$   is introduced and 
we prove some non-trivial properties of it, which imply the result in the case (i).
The aim of Section 4 is to give the proof  in the case (ii).
For any  $[\phi] \in \sD(K,r,\pi)$, we introduce a character $\chi(m_\phi)$ and show the property that $\chi(m_\phi)$ never vanishes on the Frobenius elements of places with some conditions.
It contradicts  a consequence of the effective Chebotarev density theorem if $\deg(\pi)$ is sufficiently large.
Finally, in Section 5, we construct  a Drinfeld module satisfying both (D1) and (D2) for any $\pi$ 
in the case where $r$ divides $[K:F]_{\rm i}$.
We also show that the set $\sD(K,r,\pi)$ is infinite if $\pi=t$ and $r \geq 2$.

\subsection*{Notation}
Let $p,q,A,F$, and $K$ be as above. 
Set  $n_K^{}:=[K:F]$  and write $K_{\rm s}$ for the separable closure of $F$ in $K$.
For  a finite place $u$ of $K$ above $\pi$, let $K_u$ be the completion of $K$ at $u$, $\cO_{K_u}$ its valuation ring, and $\bF_u$ its residue field.
We use the same symbol $u$ for the normalized valuation of $K_u$. 
Set $q_u^{ }:= \# \bF_u$.
Identify $G_{K_u}$ with the decomposition group of $G_K$ at $u$ and regard it as a subgroup of $G_K$.
Denote by $I_{K_u}$  the inertia subgroup of $G_{K_u}$ at $u$ and choose a lift  $\Frob{u}{}  \in G_{K_u}$ of   the Frobenius element of  $G_{K_u}/I_{K_u}$.
Denote by $e_{u|\pi}$  the absolute ramification index of $u$ and  set $f_{u|\pi}:= [\bF_u:\bF_\pi]$.

Let $F_\infty:=\bF_q{(}{(}1/t))$ be the completion of $F$ at the infinite place $\infty$ of $F$ and $\bC_\infty$ the completion of a fixed algebraic closure of $F_\infty$.
Every algebraic extension of $F$ is always regarded as a subfield of $\bC_\infty$.
Let $|\cdot|$ be the absolute value of $F_\infty$ attached to the  normalized valuation of $F_\infty$.
We also denote by  $|\cdot|$ the unique extension of it  to $\bC_\infty$, which defines an absolute value of each algebraic extension of $F$ by restriction.
For any  non-zero $a \in A$, we see  that 
$|a|=\# (A{/}aA)=q^{\deg(a)} $.

For any field $L$, denote by $G_L:=\Gal{L^{\rm sep}}{L}$ the absolute Galois group of $L$. 
The notation $C=C(x,y,\ldots,z)$ indicates a constant $C$ depending only on
$x,y,\ldots,$ and $z$. 
We use the notation $\rho^{\rm ss}$ for the  semisimplification of a representation $\rho$.

\section*{Acknowledgments}
The author is grateful to his supervisor, Yuichiro Taguchi, for giving him useful advice about Drinfeld modules and for his guidance in preparing this paper.
The author is greatly indebted to Akio Tamagawa for pointing out a mistake in the preprint and for providing 
his idea to construct  examples in Propositions \ref{main3} and \ref{m}.
The author also would like to thank Yoshiyasu Ozeki for his helpful comments on Proposition \ref{DRTcondi}.  

\section{Drinfeld modules}
\subsection{Basic definitions}

Let  $L$ be a field  equipped with an $\bF_q$-algebra homomorphism  $\iota:A \ra L$.
Such a pair $(L,\iota)$ is called an {\it  $A$-field}.
Let  $\bG_{a,L}$ be  the additive group scheme defined over $L$.
Denote by $\End{\bF_q}{\bG_{a, L}}$ the ring of $\bF_q$-linear endomorphisms of $\bG_{a,L}$.
It is isomorphic to the non-commutative polynomial ring $L\{ \tau \}$ 
in one variable $\tau$ satisfying  $\tau c=c^{q}\tau$ for any $c \in L$, where $\tau$ is the $q$-power Frobenius map.
Let $r$ be a positive integer.

\begin{definition}\label{Dr}
A {\it Drinfeld module} $\phi$ of rank $r$ defined over the $A$-field  $L$ is an $\bF_q$-algebra homomorphism
\[
\phi:A \ra L \{ \tau \};a \mapsto \phi_a
\]
such that  $\phi_t=\iota(t) + c_1\tau + \cdots + c_r\tau^r$, where $c_1, \ldots ,c_r \in L$ and $c_r \neq 0$.
\end{definition}

Note that  $\phi$ is completely determined by the image $\phi_t$ of $t$.
For two Drinfeld modules $\phi$ and $\psi$  over $L$,
  a {\it morphism} from $\phi$ to $\psi$ is  an  element $f \in L\{ \tau \}$
such that  $f \phi_a = \psi_a  f$ for any $a \in A$.
We say that $f$ is an {\it isomorphism} if there exists a morphism $g$ from $\psi$ to $\phi$ such that $fg=gf=1$.
It is easy to check that $f$ is an isomorphism if and only if $f \in L^\times$.

For any $a \in A$, its image $\phi_a$ is an endomorphism of $\bG_{a,L}$, so that $\phi$ endows  the additive group $\bG_{\mathrm{a},L}(L^{\mathrm{sep}})\simeq L^{\mathrm{sep}}$ with a new $A$-module structure defined by $a\cdot \lambda:=\phi_a(\lambda)$.
Denote   this  $A$-module by ${}_{\phi}L^{\mathrm{sep}}$.
For any non-zero element $a \in A$, the set of {\it $a$-torsion points} 
\[
\phi[a]=\{ \lambda \in {}_{\phi}L^{\mathrm{sep}}; \ a\cdot \lambda = \phi_a(\lambda)=0 \}
\]
of $\phi$ is  an $A$-submodule of  ${}_{\phi}L^{\mathrm{sep}}$ on which  $G_L$ acts.
If $a$ is not contained in the kernel of $\iota$, then $\phi[a]$  is a free $A/aA$-module of rank $r$.

Let $K$ be a finite extension of $F$.
From now on, unless otherwise stated, we regard  $K$ as an  $A$-field via the inclusion $A \hra F \subset K$.
Let $\phi$  be a rank-$r$ Drinfeld module  over  $K$.
For any finite place  $v$ of $K$, 
 we can regard $\phi$ as a Drinfeld module over $K_v$ via  the canonical inclusion $K\{ \tau \} \hra K_v\{ \tau \}$.

\begin{definition}
(1) We say that $\phi$ has {\it stable reduction} at $v$  if there exists a Drinfeld module $\psi$ over $K_v$ such that $\psi$ is isomorphic to $\phi$ over $K_v$ and 
\[
\psi_t= t  + c'_1\tau+ \cdots + c'_r\tau^r
\]
such that  $c'_1 , \ldots , c'_r \in \cO_{K_v}$ and $ c'_{r'} \in \cO_{K_v}^\times$ for some $1 \leq r' \leq r$. 

(2) We say that $\phi$ has {\it good reduction} at $v$ if it has stable reduction  at $v$ and $c_r' \in \cO_{K_v}^\times$.
\end{definition}

\begin{prop}[Drinfeld {\cite[Proposition 7.1]{Dri}}]
Every Drinfeld module $\phi$ over $K$ has potentially stable reduction at any finite place $v$ of $K$.
\end{prop}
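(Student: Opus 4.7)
The plan is to produce a twist of $\phi$ by an appropriate scalar in a finite extension of $K_v$ that simultaneously clears denominators and turns at least one coefficient into a unit; this is essentially a Newton polygon argument applied to the non-commutative polynomial $\phi_t$.

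First I would extend the normalized valuation $v$ uniquely to a $\bQ$-valued valuation on $\bar{K}_v$. For any $u\in\bar{K}_v^\times$, conjugation $\psi := u\phi u^{-1}$ defines a Drinfeld module isomorphic to $\phi$ over the finite extension $K_v(u)$. Writing $\phi_t = t + c_1\tau + \cdots + c_r\tau^r$ and applying the commutation rule $\tau^i c = c^{q^i}\tau^i$, a direct calculation yields
\[
u\tau^i u^{-1} = u^{1-q^i}\tau^i,
\]
so that $\psi_t = t + c_1'\tau + \cdots + c_r'\tau^r$ with $c_i' = c_i\,u^{1-q^i}$, and hence
\[
v(c_i') \;=\; v(c_i) - (q^i-1)\,v(u).
\]
Note that the constant term $t$ is automatically in $\cO_{K_v}$ because $v$ is a \emph{finite} place of $K$, and $t$ has a pole only at $\infty$.

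Second, I would optimize the choice of $u$. Set
\[
\rho \;:=\; \min_{\substack{1\leq i\leq r \\ c_i\neq 0}} \frac{v(c_i)}{q^i-1},
\]
which is a well-defined rational number since $c_r\neq 0$, and let $i_0$ be an index realizing the minimum. Choose $u$ in a finite extension $K'$ of $K_v$ with $v(u)=\rho$; such $u$ exists, for instance as a $(q^{i_0}-1)$-th root of any element of valuation $v(c_{i_0})$. By the very definition of $\rho$, we then have $v(c_i')\geq 0$ for every $i$, with equality when $i=i_0$. Moreover $c_r'\neq 0$ since $c_r\neq 0$, so $\psi$ is still of rank $r$ and exhibits stable reduction of $\phi$ over $K'$, as required.

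The whole argument is elementary once the commutation step with $\tau$ is carried out correctly; the only real subtlety is that this commutation introduces the exponent $q^i$ instead of $i$, so the relevant quantity to minimize is $v(c_i)/(q^i-1)$ rather than $v(c_i)/i$. No serious obstacle arises---the proof is constructive and gives the extension $K'$ explicitly.
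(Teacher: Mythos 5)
Your proof is correct and follows essentially the same route as the paper: set $\rho=\min_i v(c_i)/(q^i-1)$, pass to a finite (tamely ramified, separable) extension containing an element $u$ with $v(u)=\rho$, and twist by $u$ to force all coefficients into $\cO_{K_v'}$ with at least one being a unit. The paper states this more tersely via the condition $e(K_v'/K_v)\cdot R\in\bZ$, but the underlying computation $v(c_i')=v(c_i)-(q^i-1)v(u)$ is identical.
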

\begin{proof}
Write $\phi_t=t + c_1\tau + \cdots + c_r\tau^r$ and set $R:=\min_{1 \leq s \leq r} \{ \frac{v(c_s)}{q^s-1} \}$.
Let $K_v'$ be a finite extension of $K_v$.
If the ramification index $e(K_v'/K_v) $ satisfies  $e(K_v'/K_v) \cdot R \in \bZ$, then 
$\phi$ has stable reduction over $K_v'$.
\end{proof}

\begin{rem}\label{stablereduction}
In particular, we can take $K_v'/K_v$ as  a tamely ramified finite separable extension whose ramification index  $e(K_v'/K_v)$ divides $\prod_{s=1}^r(q^s-1)$.
Every rank-one Drinfeld module clearly has  potentially good reduction at any finite place.

\end{rem}

For any monic irreducible element $\pi \in A$, 
the set of $\pi$-torsion points $\phi[\pi]$ is a $G_K$-stable $r$-dimensional $\bF_\pi$-vector space, so that 
the {\it mod $\pi$ representation} 
\[
\mrep{\phi}{\pi}:G_K \ra \GL{\bF_\pi}{\phi[\pi]} \simeq \GL{r}{\bF_\pi}
\]
attached to $\phi$ can be defined.
Let   $A_\pi:=\varprojlim A/\pi^n A$ be the $\pi$-adic completion of $A$. 
Considering the maps $\phi[\pi^{n+1}] \ra \phi[\pi^n]$ defined by $x \mapsto \pi \cdot x$, one can define the {\it  $\pi$-adic Tate module} 
$T_\pi(\phi):= \varprojlim \phi[\pi^n]$, which is a free $A_\pi$-module of rank $r$ with continuous $G_K$-action.
Write $\rep{\phi}{\pi}: G_K \ra \GL{r}{A_\pi}$  for the representation attached to $T_\pi(\phi)$.

Let $\pi_0$  be a monic irreducible element of $A$ with $\pi_0 \neq \pi$ and let $v$ be a finite place of $K$ above $\pi_0$.
The next proposition is an analogue of the   N{\'e}ron-Ogg-Shafarevich criterion  for good reduction of  abelian varieties (cf.  \cite[Theorem 1]{ST}).

\begin{prop}[Takahashi {\cite[Theorem 1]{Tak}}]\label{NOS}
A Drinfeld module  $\phi$ over $K$ has good reduction at $v$ if and only if  $T_\pi(\phi)$ is unramified at $v$.
\end{prop}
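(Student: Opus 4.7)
The plan is to handle the two implications separately: the forward direction is elementary, while the reverse relies on Drinfeld's analytic uniformization for Drinfeld modules with stable reduction.

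For the forward direction, suppose $\phi$ has good reduction at $v$, and I would fix a model $\phi_t = t + c_1 \tau + \cdots + c_r \tau^r$ with $c_i \in \cO_{K_v}$ and $c_r \in \cO_{K_v}^\times$, with reduction $\bar\phi$ over $\bF_v$. The crucial observation is that $\phi_{\pi^n}(X)$, viewed as a polynomial in $X$, has unit leading coefficient (a product of $q$-power Frobenius images of $c_r$) and coefficient of $X$ equal to $\iota(\pi^n) = \pi^n$, which is a unit since $v \nmid \pi$. A Newton-polygon argument then forces every nonzero root to be a unit, so the reduction map $\phi[\pi^n] \ra \bar\phi[\pi^n]$ is injective and, by a cardinality count, bijective as $G_{K_v}$-modules. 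Since inertia acts trivially on $\bar\phi[\pi^n]$, it acts trivially on $\phi[\pi^n]$ for every $n$, hence $T_\pi(\phi)$ is unramified.

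For the reverse direction, assume $T_\pi(\phi)$ is unramified at $v$. By Drinfeld's potential stable reduction theorem and Remark \ref{stablereduction}, after a finite tamely ramified extension $K_v'/K_v$, $\phi$ acquires stable reduction of some rank $r' \leq r$. The key step is to show $r'=r$. For this I would invoke Drinfeld's analytic uniformization: over the completion of $K_v'$, $\phi$ is analytically a quotient of a rank-$r'$ Drinfeld module $\tilde{\phi}$ of good reduction by a discrete $A$-lattice $\Lambda$ of rank $r-r'$ whose nonzero elements have strictly positive valuation. This produces a $G_{K_v'}$-equivariant exact sequence
\[
0 \lra T_\pi(\tilde{\phi}) \lra T_\pi(\phi) \lra T_\pi(\Lambda) \lra 0,
\]
in which $T_\pi(\tilde{\phi})$ is unramified of $A_\pi$-rank $r'$ by the forward direction, while the quotient is built by iteratively $\pi$-dividing lattice elements of positive valuation, which ramifies the inertia action as soon as $r - r' > 0$. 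Since the hypothesis also makes $T_\pi(\phi)$ unramified over $K_v'$, the quotient must be unramified, forcing $r'=r$ and thus good reduction over $K_v'$.

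It remains to descend good reduction from $K_v'$ back to $K_v$: under the unramifiedness hypothesis, all $\pi^n$-torsion points lie in $K_v^{\mathrm{ur}}$, and since any isomorphism between good-reduction integral models is an element of $\cO_{K_v'}^\times$ whose Galois conjugation is detected by its scalar action on the Tate module, triviality of the descent cocycle on $T_\pi(\phi)$ forces triviality on the model, so that the integral model descends to $\cO_{K_v}$. The main obstacle is clearly the reverse direction, and within it the uniformization step: one needs reliable access to Drinfeld's lattice uniformization theorem for Drinfeld modules with stable reduction and a careful verification that the lattice quotient $T_\pi(\Lambda)$ genuinely ramifies inertia whenever $r' < r$. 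By comparison, the forward implication and the final descent are relatively routine.
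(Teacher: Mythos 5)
The paper does not prove this statement: Proposition \ref{NOS} is cited directly from Takahashi \cite[Theorem 1]{Tak}, so there is no in-paper argument to compare with. That said, your outline is the natural one, via Drinfeld's Tate uniformization, and the forward direction is correct as written.

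In the reverse direction, two points need care. First, the nonzero elements of the lattice $H = \Ker{e}$ have strictly \emph{negative} (not positive) valuation, since discreteness of an $A$-lattice in $(K'_v)^{\mathrm{sep}}$ means $|\lambda| \to \infty$ along $H$; only the nonvanishing of the valuation is used, but the sign should be right. Second, and more substantively, the quotient $H \otimes_A A_\pi$ is \emph{not} itself ramified: the $G_{K'_v}$-action on $H$ factors through a finite quotient and is even unramified over a further finite extension, as in Proposition \ref{stable}. What forces $T_\pi(\phi)$ to ramify when $h>0$ is the nonsplit monodromy of the extension $0 \to T_\pi(\psi) \to T_\pi(\phi) \to H \otimes_A A_\pi \to 0$: a lift to $T_\pi(\phi)$ of a nonzero $\lambda \in H$ is a compatible system of $\pi^n$-th $\psi$-division points of $\lambda$, and these generate ramified extensions precisely because $v(\lambda) \neq 0$. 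Your phrase ``iteratively $\pi$-dividing lattice elements'' gestures at this, but attributing the ramification to the quotient module misidentifies its source. Finally, the descent of a good model from $K'_v$ back to $K_v$ is far from routine and is only sketched; a complete argument would need to combine the finiteness of the inertia image on $T_\pi(\phi)$ (from potential good reduction), the fact that two good models over a finite extension differ by a unit in $\cO_{K'_v}^\times$, and the relation between the resulting cocycle and the action on the Tate module, and as written these steps are asserted rather than proved. The overall strategy is sound, but the extension-class point and the descent both need to be filled in.
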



Suppose that $\phi$ has good reduction at $v$. 
Then  $\rep{\phi}{\pi}$ is unramified at $v$, and so $\rep{\phi}{\pi}(\Frob{v}{}) \in \rep{\phi}{\pi}(G_K)$ is independent of the choice of $\Frob{v}{}$.
Denote by
\[
P_{v}(T):=\det(T-\rep{\phi}{\pi}(\Frob{v}{}) | T_\pi(\phi)) \in A_\pi[T]
\]
the characteristic polynomial of  $\Frob{v}{}$.
Then we have the following:

\begin{prop}[Takahashi {\cite[Proposition 3 (ii)]{Tak}}]\label{good}
The  polynomial $P_v(T)$ has coefficients in $A$ and is  independent  of $\pi$.
Any root  $\alpha$ of $P_v(T)$ satisfies  $|\alpha|=q_v^{1/r}$.
\end{prop}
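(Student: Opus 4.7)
The plan is to reduce everything to a statement about the Frobenius endomorphism of the reduced Drinfeld module $\bar\phi$ over the residue field $\bF_v$, and then invoke the structure theory of endomorphism rings of Drinfeld modules over finite fields. Since $\phi$ has good reduction at $v$, it spreads out to a Drinfeld module over $\cO_{K_v}$, whose closed-fiber reduction $\bar\phi$ is a rank-$r$ Drinfeld module over $\bF_v$. Standard functoriality (together with Proposition \ref{NOS}) identifies $T_\pi(\phi)$ with $T_\pi(\bar\phi)$ as $G_{\bF_v}$-modules, and under this identification $\rep{\phi}{\pi}(\Frob{v}{})$ is the action on $T_\pi(\bar\phi)$ of the $q_v$-power Frobenius endomorphism $F := \tau^{\log_q q_v} \in \End{\bF_v}{\bar\phi}$.

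For integrality of the coefficients and independence of $\pi$, I would appeal to the fact, due essentially to Drinfeld, that $\End{\bF_v}{\bar\phi}$ is a finitely generated projective $A$-module (an order in a central simple $F$-algebra of dimension at most $r^2$). Hence $F$ is integral over $A$, and the commutative $A$-subalgebra $A[F]$ sits inside a finite-dimensional $F$-algebra. Combined with the Tate-type injectivity
\[
\End{\bF_v}{\bar\phi} \otimes_A A_\pi \hra \End{A_\pi}{T_\pi(\bar\phi)},
\]
proved by Taguchi, a rank count identifies $P_v(T)$ with the characteristic polynomial of multiplication by $F$ on the $r$-dimensional $F$-algebra $F \otimes_A A[F]$ (viewed inside the semisimple algebra $\End \otimes F$). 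This characteristic polynomial lies in $A[T]$ by integrality of $F$ and is manifestly intrinsic to $\bar\phi$, hence independent of the auxiliary prime $\pi$.

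For the Archimedean bound $|\alpha|=q_v^{1/r}$, I would compute the degree (in the isogeny sense) of $F$: the element $F = \tau^{\log_q q_v}$ acts on $\bG_{a,\bF_v}$ as an $\bF_q$-linear map of degree $q_v$, so the constant term of $P_v(T)$ is, up to sign, an element $c_v \in A$ with $|c_v|=q_v$. Combined with the fact that every root $\alpha$ is integral over $A$ away from the place $v$ (since the representation is unramified outside $v$), the product formula for $F(\alpha)/F$ isolates the contribution at $\infty$, forcing $|\alpha|_\infty = q_v^{1/r}$ once one knows that all roots have the same Archimedean absolute value. The main obstacle is exactly this last point: proving that the Newton polygon of $P_v(T)$ at $\infty$ is pure of slope $1/r$. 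This is the genuine Weil-Hasse estimate for Drinfeld modules, and its verification---via the analysis of the slopes of $\bar\phi_\pi$ over the completed local ring and a symmetry argument between Newton slopes at $v$ and at $\infty$---constitutes the technical core of the proposition.
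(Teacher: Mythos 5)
The paper does not prove this proposition at all---it is cited verbatim from Takahashi and used as a black box---so there is no in-text argument to compare your sketch against. Judged on its own terms, your outline takes a reasonable general route (reduce to the Frobenius endomorphism of the reduction $\bar\phi$ over $\bF_v$, use the $A$-order structure of $\End{\bF_v}{\bar\phi}$ together with the Tate--Taguchi injectivity for integrality and $\pi$-independence), but the Archimedean bound is not actually established. You flag this yourself: you reduce $|\alpha|=q_v^{1/r}$ to the purity of the Newton polygon of $P_v(T)$ at $\infty$ and then call that purity ``the technical core.'' It is indeed the core---it is the Riemann Hypothesis for Drinfeld modules---and nothing in your sketch proves it. The intermediate reduction you give is also incomplete: from $|N|=q_v$ for the constant term $N=\pm\prod_i\alpha_i$ and integrality of the roots over $A$, one only obtains $\prod_i|\alpha_i|=q_v$; and the product formula applied to a single root $\alpha$ merely yields $\prod_{w\mid\infty}|\alpha|_w\geq 1$, since integrality over $A$ controls only the places of $F(\alpha)$ not above $\infty$. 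Neither statement isolates the individual $|\alpha_i|$ without first knowing that all roots share the same $\infty$-adic absolute value---which is precisely what is to be proved. What is actually required is a local slope analysis of the Frobenius at $v$ and at $\infty$ (using the height of $\bar\phi$ and the local structure of its endomorphism algebra), or a Rosati-type positivity argument; you gesture at the former but do not carry it out.

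A smaller imprecision: identifying $P_v(T)$ with the characteristic polynomial of multiplication by Frobenius on $F\otimes_A A[\mathrm{Frob}]$ ``by a rank count'' presumes that this commutative subalgebra has $A$-rank $r$, which is false in general. In fact $P_v(T)$ is a power of the minimal polynomial of Frobenius, and the exponent is obtained by comparing with the rank of $T_\pi(\bar\phi)$ via Taguchi's theorem, not from any intrinsic property of $A[\mathrm{Frob}]$ alone. Integrality of the coefficients and $\pi$-independence do follow from this circle of ideas, but the bookkeeping as you wrote it is not correct.
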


The following example gives a function field analogue of cyclotomic extensions of number fields.

\begin{ex}[cf.\ {\cite[Chapter 12]{Ros}}]\label{Carlitz}
The rank-one Drinfeld module $\cC:A \ra F\{ \tau \}$ determined by $\cC_t= t + \tau$ is called the {\it Carlitz module}.
For any  monic irreducible element $\pi \in A$, define
\[
\chi_\pi:G_F \ra \GL{\bF_\pi}{\cC[\pi]} \simeq \bF_\pi^\times,
\]
which is called the {\it mod $\pi$ Carlitz character}.
Since $\cC$ has good reduction at any finite place $\pi_0$ of $F$, 
the  character $\chi_\pi$ is  unramified at  $\pi_0$ if $\pi_0 \neq \pi$.
For any finite place $v$ of $K$ above $\pi_0 \neq \pi$, it is known that  $\chi_\pi $ satisfies
\[
\chi_\pi(\Frob{v}{}) \equiv \pi_0^{f_{v|\pi_0}} \pmod \pi.
\]
The mod $\pi$ Carlitz character  induces an isomorphism $\Gal{F(\cC[\pi])}{F} \overset{\sim}{\ra} \bF_\pi^\times$
, so that $F(\cC[\pi])/F$ is a cyclic  extension which is unramified outside $\pi$ and $\infty$.
Moreover, it is known that $\pi$ is totally  ramified in $F(\cC[\pi])$
and the ramification of the infinite place $\infty$ is as follows: there exists a subfield $F(\cC[\pi])_+$ with degree $[F(\cC[\pi]):F(\cC[\pi])_+]=q-1$ such that $\infty$ is totally split in $F(\cC[\pi])_+$ and any place of $F(\cC[\pi])_+$ above $\infty$ is totally ramified in $F(\cC[\pi])$.

\end{ex}

\subsection{Tate uniformization}

Let $u$ be a finite place of $K$ above $\pi$ and $\phi$ a rank-$r$ Drinfeld module over $K$.
Suppose that $\phi$ has stable reduction at $u$.
Then Drinfeld's result on Tate uniformization gives an analytic description of  $\phi$ as a Drinfeld module over $K_u$.

\begin{prop}[Tate uniformization; Drinfeld {\cite[Section 7]{Dri}}]
There exist a unique  Drinfeld module  $\psi$ over $K_u$ with good reduction and a unique entire analytic surjective morphism $e:\psi \ra \phi$ defined over $K_u$ such that 
$e$ is the identity on $\mathrm{Lie}(\bG_{a,K_u})$.
\end{prop}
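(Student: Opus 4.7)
The plan is to imitate Tate's uniformization for elliptic curves: identify $\psi$ as the ``good-reduction part'' of $\phi$, construct the uniformizing morphism $e$ as a noncommutative power series in $\tau$, and then verify entireness, surjectivity, and uniqueness by non-archimedean analysis on $\bC_\infty$.

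After replacing $\phi$ by an isomorphic model over $K_u$, I may assume $\phi_t = t + c_1\tau + \cdots + c_r\tau^r$ with $c_i \in \cO_{K_u}$. Let $r'$ be the largest index for which $c_{r'} \in \cO_{K_u}^\times$; the reduction $\bar\phi$ modulo the maximal ideal of $\cO_{K_u}$ is then a rank-$r'$ Drinfeld module over $\bF_u$. Define $\psi$ over $K_u$ by $\psi_t := t + c_1\tau + \cdots + c_{r'}\tau^{r'}$; this is a rank-$r'$ Drinfeld module with good reduction, and its reduction coincides with $\bar\phi$. To construct $e$, I look for a formal series $e = \sum_{n \geq 0} e_n \tau^n$ with $e_0 = 1$ satisfying $e \cdot \psi_t = \phi_t \cdot e$ in the ring of formal power series in $\tau$ over $K_u$ (with $\tau c = c^q \tau$). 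Matching the coefficient of $\tau^k$ yields
$$(t^{q^k} - t)\, e_k \;=\; \sum_{j=1}^{\min(r,k)} c_j\, e_{k-j}^{q^j} \;-\; \sum_{i=1}^{\min(r',k)} e_{k-i}\, c_i^{q^{k-i}},$$
and since $t$ is transcendental over $\bF_q$ the scalar $t^{q^k} - t$ is nonzero for every $k \geq 1$; the recursion therefore determines each $e_k \in K_u$ uniquely.

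The analytic core is to show that $e(z) := \sum_{n \geq 0} e_n z^{q^n}$ converges on all of $\bC_\infty$. One bounds $|e_k|_u$ inductively from the recursion, exploiting a uniform positive lower bound on $|t^{q^k} - t|_u$ together with $|c_j|_u \leq 1$, and deduces $|e_k|_u^{1/q^k} \to 0$, which is the standard convergence criterion for additive entire series over $\bC_\infty$. Surjectivity of $e:\bC_\infty \to \bC_\infty$ then follows from the non-archimedean fact that every nonconstant $\bF_q$-linear entire function on $\bC_\infty$ is surjective, via a Weierstrass-type factorization.

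For uniqueness of the pair $(\psi, e)$, given another candidate $(\psi', e')$ the formal composite $(e')^{-1} \circ e$ (well-defined because $e'$ has constant term $1$) is a formal power series in $\tau$ with constant term $1$ that intertwines $\psi$ with $\psi'$; reducing modulo the maximal ideal of $\cO_{K_u}$ and invoking the rigidity of Drinfeld modules with good reduction identifies this composite with the identity, forcing $\psi = \psi'$ and then $e = e'$. The principal technical obstacle is the convergence estimate of the third paragraph, where the $q^k$-th powers appearing in the recursion must be dominated by the decay of $|e_k|_u$; the maximality of $r'$, which ensures that $\psi$ really captures all of the good-reduction data of $\phi$, is essential to keep the right-hand side of the recursion under control.
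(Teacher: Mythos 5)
This proposition is cited from Drinfeld's paper; the present article gives no proof of its own, so there is nothing to ``compare against.'' What can be assessed is whether your blind attempt is a correct proof, and it is not: there is a genuine gap in the construction of $\psi$.

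You take for $\psi$ the truncation $\psi_t := t + c_1\tau + \cdots + c_{r'}\tau^{r'}$, where $r'$ is the largest index with $c_{r'}\in\cO_{K_u}^\times$. With this choice the recursion does give $e_k = 0$ for $k\le r'$, but for $k>r'$ the coefficients $e_k$ do \emph{not} satisfy $|e_k|_u^{1/q^k}\to 0$, so the series $e(z)=\sum e_k z^{q^k}$ is \emph{not} entire. The obstruction is the term $e_{k-1}c_1^{q^{k-1}}$ (more generally $e_{k-i}c_i^{q^{k-i}}$ for $i\le r'$) on the $\psi$-side of the recursion: since $|c_{r'}|_u = 1$, this contributes $|e_{k-1}|_u$ with no decay, and there is no cancellation to rescue the estimate. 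Concretely, take $q=2$, $K=F$, $u$ the place at $t$, and $\phi_t = t + \tau + t\tau^2$ (so $r=2$, $r'=1$, and the truncation is the Carlitz module $\psi_t = t+\tau$). One computes $e_1=0$, $e_2 = 1/(t^3-1)$, $e_3 = t^2/((t^3-1)^2(t^7-1))$, $e_4 = (1+t+t^3+t^7+t^8)/((t^{15}-1)(t^3-1)^4(t^7-1)^2)$, and finds $|e_4|_u = |e_5|_u = 1$; the sequence $|e_k|_u$ does not even tend to $0$, let alone satisfy $|e_k|_u^{1/q^k}\to 0$. So the ``principal technical obstacle'' you flag in your last paragraph is not a gap to be filled by a cleverer estimate --- the claim is false as stated for the truncated $\psi$.

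The source of the error is conceptual: the good-reduction Drinfeld module $\psi$ of Tate uniformization is \emph{not} the truncation of $\phi$. What is true is that its reduction $\bar\psi$ modulo $\mathfrak{m}_{K_u}$ must agree with $\bar\phi$, i.e.\ $d_i \equiv c_i \pmod{\mathfrak{m}_{K_u}}$ for $1\le i\le r'$. But among all rank-$r'$ lifts $\psi_t = t + d_1\tau + \cdots + d_{r'}\tau^{r'}$ of $\bar\phi$, exactly one makes the formal intertwiner $e$ entire, and it is generically not the one with $d_i = c_i$. Producing that $\psi$ --- equivalently, producing the $\psi$-lattice $H$ of rank $r-r'$ and identifying $\phi$ with the rigid-analytic quotient $\psi/H$ --- is precisely the nontrivial content of Drinfeld's theorem, and a correct proof must solve for $\psi$ (for instance by a successive-approximation or fixed-point argument on the space of lifts of $\bar\phi$) rather than guess it by truncation. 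Your uniqueness argument has the same shape but inherits the same problem: given $e$ with $e_0=1$, the $\psi$ it intertwines is uniquely determined by $\phi$, and the real issue is existence, not rigidity of the reduction.

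The setup of the recursion, the form of the coefficient identity for $e_k$, the observation that $t^{q^k}-t\ne 0$, and the uniform lower bound $|t^{q^k}-t|_u \ge q_u^{-e_{u|\pi}}$ are all fine; the error is confined to the identification of $\psi$ and the ensuing (false) convergence claim.
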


It is known that the rank $r'$ of $\psi$ satisfies $r' \leq r$ and the 
 kernel $H:=\Ker{e}(K_u^{\rm sep})$ is an $A$-lattice of rank $h:=r-r'$ in ${}_\psi K_u^{\rm sep}$, endowed with an action of a finite quotient of $G_{K_u}$.
For any monic irreducible element $\pi_0 \in A $, the analytic morphism $e$ induces the short exact sequence
\begin{align}\label{Tu}
0 \ra T_{\pi_0}(\psi) \ra T_{\pi_0}(\phi) \ra H\otimes_{A}A_{\pi_0} \ra 0
\end{align}
of $A_{\pi_0}[G_{K_u}]$-modules.
In the case where  $\pi_0 \neq \pi$,
 the $I_{K_u}$-action on $T_{\pi_0}(\phi)$ is potentially unipotent 
since both  $T_{\pi_0}(\psi)$ and $H\otimes_AA_{\pi_0}$ are potentially unramified at $u$.

\begin{rem}\label{tau}
By the theory on ``analytic $\tau$-sheaves''(see \cite{Gar}, \cite{Gar1} and  \cite{Gar2}), the sequence (\ref{Tu}) can be interpreted as follows.
For any Drinfeld module $\phi$ over $K_u$, one can construct the analytic $\tau$-sheaf 
$\tilde{M}(\phi)$ associated with $\phi$, which is a locally free $\sO_{\tilde{\bA}^1_{K_u}}$-module of finite rank  on $\tilde{\bA}^1_{K_u}$ with some additional structures, where $\tilde{\bA}^1_{K_u}$ is the affine line
$\bA_{K_u}^1=\mathrm{Spec}A \times_{\mathrm{Spec}\bF_q} \mathrm{Spec}K_u$, seen as a rigid analytic space. 
Then the $\pi_0$-adic Tate module $T_{\pi_0}(\tilde{M}(\phi))$ of $\tilde{M}(\phi)$ can be   defined and it is isomorphic to 
$T_{\pi_0}(\phi)$.
The Tate uniformization implies that there exist an analytic $\tau$-sheaf $\tilde{N}$ which is potentially trivial and the exact sequence
\[
0 \ra \tilde{N} \ra \tilde{M}(\phi) \ra \tilde{M}(\psi) \ra 0.
\]
Since $\tilde{M} \mapsto T_{\pi_0}(\tilde{M})$ is a contravariant exact functor, we obtain
\[
0 \ra T_{\pi_0}(\tilde{M}(\psi)) \ra T_{\pi_0}(\tilde{M}(\phi)) \ra T_{\pi_0}(\tilde{N}) \ra 0,
\]
which coincides with the sequence (\ref{Tu}) (for example, see \cite[Example 7.1]{Gar3}).
\end{rem}

We would like to estimate the tame ramification of the lattice $H$.
Suppose that $H$ is non-trivial, that is,  $h \neq 0$  and  consider the representation
\[
\rho : I_{K_u} \ra \GL{A}{H} \simeq \GL{h}{A}.
\]
Then we have the following:


\begin{prop}\label{stable}
There exists a finite separable extension $L/K_u$ such that
\begin{itemize}
\item the action of  $I_L$ on $H$ is trivial, 
\item the ramification index $e(L_0/K_u)$ divides $\prod_{s=1}^{r-1}(q^s-1)$, where $L_0$ is the maximal tamely ramified extension of $K_u$ in $L$.
\end{itemize}
\end{prop}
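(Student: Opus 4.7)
My plan is to construct $L$ as the finite Galois extension of $K_u$ cut out by $\ker(\rho)$ and then control the tame ramification by a purely algebraic analysis of finite prime-to-$p$ subgroups of $\GL_h(A)$. Because the $G_{K_u}$-action on $H$ factors through a finite quotient, $\ker(\rho) \subseteq G_{K_u}$ is open, so $L := (K_u^{\mathrm{sep}})^{\ker(\rho)}$ is a finite Galois (hence separable) extension with $I_L$ acting trivially on $H$ by construction. Writing $P_{K_u} \subseteq I_{K_u}$ for the wild (pro-$p$) inertia, the maximal tamely ramified subextension $L_0 \subseteq L$ satisfies $e(L_0/K_u) = |\rho(I_{K_u})/\rho(P_{K_u})|$, and since $I_{K_u}/P_{K_u}$ is pro-cyclic and pro-prime-to-$p$, this quotient is a finite cyclic subgroup of $\GL_h(A)$ of prime-to-$p$ order. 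It therefore suffices to bound the order of an arbitrary prime-to-$p$ torsion element of $\GL_h(A)$.

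For this, let $M \in \GL_h(A)$ have order $n$ coprime to $p$. Its minimal polynomial $f(T) \in A[T]$ divides $T^n - 1 \in \bF_q[T]$, and since monic divisors in $A[T] = \bF_q[t][T]$ of a polynomial in $\bF_q[T]$ must themselves lie in $\bF_q[T]$ (by unique factorization in the PID $\bF_q[T]$), we get $f(T) \in \bF_q[T]$. Moreover $\gcd(n,p)=1$ makes $f$ separable, so $f = \prod_i f_i$ factors into distinct monic irreducibles in $\bF_q[T]$ with $\deg f_i = d_i$. The Chinese Remainder Theorem yields the primary decomposition $A^h = \bigoplus_i V_i$ with $V_i := \ker f_i(M)$ a module over $A[T]/f_i \cong \bF_{q^{d_i}}[t]$; being a finitely generated torsion-free module over a PID, $V_i$ is free of some rank $h_i \geq 1$ over $\bF_{q^{d_i}}[t]$. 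Hence $h = \sum_i d_i h_i \geq \sum_i d_i$, so in particular $d_i \leq h$. On each $V_i$, the operator $M$ acts as the image of $T$ in $\bF_{q^{d_i}}^{\times}$, namely a root $\alpha_i$ of $f_i$, whose order divides $q^{d_i}-1$. Consequently $n = \mathrm{lcm}_i \mathrm{ord}(\alpha_i)$ divides $\mathrm{lcm}_i(q^{d_i}-1)$. Combining this with $h = r - r' \leq r - 1$ (which holds since $\psi$ is a Drinfeld module, hence $r' \geq 1$), every $d_i$ lies in $\{1,\ldots,r-1\}$, so $n$ divides $\mathrm{lcm}_{s=1}^{r-1}(q^s-1)$ and a fortiori $\prod_{s=1}^{r-1}(q^s-1)$.

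The main obstacle is the algebraic step in the second paragraph, specifically the two observations that (i) an element of prime-to-$p$ order in $\GL_h(A)$ must have minimal polynomial in the constant subring $\bF_q[T]$, and (ii) the primary decomposition of $A^h$ with respect to this minimal polynomial yields the bound $\sum d_i \leq h$ on the degrees of the irreducible factors. Once these are established, combining with the ramification-theoretic identity $e(L_0/K_u) = |\rho(I_{K_u}^{\mathrm{tame}})|$ from the first paragraph finishes the argument; no deeper input from Drinfeld's Tate uniformization is needed beyond the facts, already recorded in the excerpt, that $H$ is a free $A$-module of rank $h \leq r-1$ carrying a finite $A$-linear Galois action.
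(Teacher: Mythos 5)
Your overall structure mirrors the paper's: cut out $L$ from the kernel of the Galois action on $H$, identify $e(L_0/K_u)$ with the prime-to-$p$ part of the inertia image, and bound that by an algebraic fact about $\GL{h}{A}$. Where you diverge is in that algebraic fact. The paper isolates it as a separate lemma (Lemma~2.9) which bounds the prime-to-$p$ part of the order of \emph{any} finite subgroup $G \subset \GL{n}{A}$, and proves it by embedding $\GL{n}{A} \hookrightarrow \GL{n}{\bF_q[[t]]}$ and using that the kernel of the reduction $\GL{n}{\bF_q[[t]]} \twoheadrightarrow \GL{n}{\bF_q}$ is pro-$p$, so the prime-to-$p$ part of $|G|$ divides $|\GL{n}{\bF_q}|$. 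You instead bound the order of a single prime-to-$p$ torsion element by analysing its minimal polynomial: showing it lies in $\bF_q[T]$, that it is separable, and using CRT to decompose $A^h = \bigoplus V_i$ with $V_i$ free over $\bF_{q^{d_i}}[t]$, so that $\sum d_i \leq h$ and the order divides $\mathrm{lcm}_s(q^s-1)$. This is genuinely different; it is more elementary (no completion of $A$ needed), slightly sharper (lcm rather than the full product), and essentially self-contained. The trade-off is that the paper's lemma directly controls group orders, which is the quantity that actually appears, whereas yours controls element orders, so you need an extra bridging step.

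That bridging step is where there is a small gap. You assert that the tame quotient $\rho(I_{K_u})/\rho(P_{K_u})$ "is a finite cyclic subgroup of $\GL_h(A)$", but a priori it is a quotient of a subgroup, not itself a subgroup. This is repairable in one line — either invoke Schur–Zassenhaus (the wild part $\rho(P_{K_u})$ is a normal $p$-group with prime-to-$p$ cyclic quotient, hence a complement exists), or, more elementarily, lift a generator $\bar g$ of the cyclic quotient to $g \in \rho(I_{K_u})$, write $\mathrm{ord}(g) = n p^a$ with $n = [\rho(I_{K_u}):\rho(P_{K_u})]$, and note that $g^{p^a} \in \GL{h}{A}$ has exact prime-to-$p$ order $n$. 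With one of those sentences added, your argument is complete and correct. (The paper sidesteps this because Lemma~2.9 bounds the group order $|\rho(I)|$ directly, and $e(L_0/K_u)$ is by definition the prime-to-$p$ part of $e(L/K_u)=|\rho(I)|$.) One further very minor remark: the $\rho$ in the proposition's statement is defined only on $I_{K_u}$, so your $L := (K_u^{\mathrm{sep}})^{\ker\rho}$ tacitly uses the natural extension of $\rho$ to $G_{K_u}$; the paper instead fixes an auxiliary Galois extension $E$ through which the $G_{K_u}$-action factors and takes $L = E^{\ker(\rho|_I)}$. Both constructions give a valid $L$.
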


\begin{proof}
Let $E/K_u$ be a finite Galois extension such that the action of $G_{K_u}$ on $H$ factors through $\Gal{E}{K_u}$.
Now the image  $I$ of $I_{K_u}$ by the canonical projection map  $G_{K_u} \thra \Gal{E}{K_u}$ is the inertia subgroup of $\Gal{E}{K_u}$.
For the representation
\[
\rho:I \ra \GL{A}{H} \simeq \GL{h}{A},
\]
denote by $L$ the fixed subfield of $E$ by $J:=\Ker{\rho}$.
 By construction, the action of  inertia subgroup $I_L$ of $G_L$  on  $H$  is trivial. 
Now the image  $\mathrm{Im}(\rho)$ is a finite subgroup of $\GL{h}{A}$ of order $\#I/J=e(L/K_u)$ and $h \leq r-1$.
Applying Lemma \ref{gp} below to $\mathrm{Im}(\rho)$, we see that  $e(L_0/K_u)$ divides $\prod_{s=1}^{r-1}(q^s-1)$.
\end{proof}

\begin{rem}
 Proposition \ref{stable} means that the Drinfeld module $\phi$ is ``semistable'' over $L$ in the following sense. 
 By Remark \ref{tau}, the analytic $\tau$-sheaf $\tilde{M}(\phi)$  is  the extension 
  of  $\tilde{M}(\psi)$ by $\tilde{N}$   and both $\tilde{M}(\psi)$ and $\tilde{N}$  are ``good'' over $L$.
Hence the analytic $\tau$-sheaf $\tilde{M}(\phi)$ is  {\it strongly semistable} over $L$ in the sense of 
\cite[Definition 4.6]{Gar2}.
\end{rem}

\begin{lem}\label{gp}
For any positive integer $n$, 
let $G$ be a finite subgroup of $\mathrm{GL}_n(A)$.
Then the maximal prime-to-$p$ divisor of $\#G$ is a factor of  $\prod_{s=1}^{n}(q^s-1)$.
\end{lem}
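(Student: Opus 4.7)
The plan is to reduce modulo a degree-one place of $A$ and exhibit $G$ as an extension of a subgroup of $\mathrm{GL}_n(\mathbb{F}_q)$ by a $p$-group; a direct count then finishes the argument. More precisely, take the monic irreducible $t \in A$ and consider the reduction-mod-$t$ map
\[
\rho \colon \mathrm{GL}_n(A) \longrightarrow \mathrm{GL}_n(A/tA) = \mathrm{GL}_n(\mathbb{F}_q).
\]
Restrict $\rho$ to $G$ and set $H := G \cap \ker \rho$. The elements of $\ker \rho$ are precisely the matrices of the form $I + tN$ with $N \in M_n(A)$.

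The main step is to show that $H$ is a $p$-group. Suppose $M = I + tN \in H$ has finite order $m$ coprime to $p$, and pick $k$ with $p^k \equiv 1 \pmod{m}$, so that $M^{p^k} = M$. Since $I$ and $tN$ commute and the characteristic is $p$, the freshman's dream yields $(I+tN)^{p^k} = I + t^{p^k} N^{p^k}$, so the identity $M^{p^k} = M$ rewrites as $N = t^{p^k-1} N^{p^k}$. Assuming $N \neq 0$, let $v$ denote the minimum $t$-adic valuation of the entries of $N$; then $v \geq 0$ is finite, while the right-hand side has entries of $t$-adic valuation $\geq (p^k-1) + p^k v$. Comparing gives $v(1 - p^k) \geq p^k - 1$, which is impossible for $v \geq 0$. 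Hence $N = 0$ and $M = I$, so $H$ has no nontrivial prime-to-$p$ torsion, and by Cauchy's theorem $H$ is a $p$-group.

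To conclude, note $|G| = |H| \cdot |\rho(G)|$ with $|H|$ a power of $p$, so the maximal prime-to-$p$ divisor of $|G|$ divides $|\rho(G)|$, which in turn divides
\[
|\mathrm{GL}_n(\mathbb{F}_q)| = q^{n(n-1)/2} \prod_{s=1}^{n} (q^s - 1).
\]
Being coprime to $p$, this divisor must actually divide $\prod_{s=1}^n(q^s - 1)$, as required.

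The only nontrivial step is the verification that $H$ is a $p$-group; everything else is bookkeeping with orders. That step is short once one exploits the characteristic-$p$ binomial collapse, which is the crucial ingredient and has no analogue in the number field setting, where a comparable lemma would require a more delicate argument (such as Minkowski-type considerations).
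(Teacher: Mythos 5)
Your proof is correct, and its overall shape coincides with the paper's: reduce modulo $t$ to $\GL{n}{\bF_q}$, show the kernel contributes only $p$-power order, and read off the bound from $\#\GL{n}{\bF_q}=q^{n(n-1)/2}\prod_{s=1}^n(q^s-1)$. The difference lies in how the kernel is handled. The paper passes to the $t$-adic completion $\bF_q[[t]]$ and cites the standard fact that the congruence subgroup $\Gamma_n=\ker(\GL{n}{\bF_q[[t]]}\to\GL{n}{\bF_q})$ is pro-$p$ because $\bF_q[[t]]$ is a complete noetherian local ring with finite residue field of characteristic $p$; you instead stay over $A$ and prove directly that $G\cap\ker\rho$ has no nontrivial prime-to-$p$ torsion, via the characteristic-$p$ binomial collapse $(I+tN)^{p^k}=I+t^{p^k}N^{p^k}$ and a $t$-adic valuation comparison, then invoke Cauchy's theorem. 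Your route is more elementary and self-contained, avoiding both the completion and the general pro-$p$ theorem, at the cost of being slightly longer; the paper's is shorter once the standard congruence-subgroup fact is taken as known. One minor point worth making explicit: in choosing $k$ with $p^k\equiv 1\pmod m$ you implicitly use $k\geq 1$ (so that $p^k>1$), which is available because for $m>1$ one may take $k$ to be the multiplicative order of $p$ modulo $m$, and the case $m=1$ is trivial.
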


\begin{proof}
Consider the $t$-adic completion $\bF_q[[t]]$ of $A$ and regard $G$ as a finite subgroup of $\GL{n}{\bF_q[[t]]}$.
Let $\Gamma_n$ be the kernel of the map $\GL{n}{\bF_q[[t]]} \thra \GL{n}{\bF_q}$ induced by the reduction map $\bF_q[[t]] \thra \bF_q$.
Since $\bF_q[[t]]$ is a complete noetherian local ring whose residue field is finite of characteristic $p$, $\Gamma_n$ is a pro-$p$ group.
Hence the short exact sequence $1 \ra \Gamma_n \ra \GL{n}{\bF_q[[t]]} \ra \GL{n}{\bF_q} \ra 1$ shows that the maximal prime-to-$p$ divisor of $\# G$ is a factor of $\# \GL{n}{\bF_q}=q^{n(n-1)/2}\prod_{s=1}^{n}(q^s-1)$.
\end{proof}

\section{Inertia action on torsion points}
Let $\pi$ be a monic irreducible element of $A$.
In this section, studying ramification of mod $\pi$ representations attached to  Drinfeld modules, we show the non-existence result (Theorem \ref{main1}) in the case where  $r$ is a $p$-power and  does not divide $[K:F]_{\rm i}$.

\subsection{Tame inertia weights}
Let $u$ be a finite place of $K$ above $\pi$.
For a fixed separable closure $K^{\rm sep}_u$ of $K_u$ with residue field $\bar{\bF}_u$, denote 
by $K_u^{\rm ur}$ (resp. $K_u^{\rm t}$)  the maximal unramified  (resp. maximal tamely ramified) extension of $K_u$ in  $K^{\rm sep}_u$, so that $I_{K_u}$ is isomorphic to $\Gal{K_u^{\rm sep}}{K_u^{\rm ur}}$. 
Denote by $I^{\rm t}:=\Gal{K_u^{\rm t}}{K_u^{\rm ur}}$  the tame inertia subgroup of $I_{K_u}$.
Let $d$ be a positive integer and $\bF$  the finite field   with $q_\pi^{d}$ elements in $\bar{\bF}_u$.
Then  $\bF$ is the finite extension of $\bF_\pi$ of degree $d$.
Write $\mu_{q_\pi^d-1}(K_u^{\rm sep})$ for the set of $(q_\pi^d-1)$-st roots of unity in $K^{\rm sep}_u$ and fix the  isomorphism $\mu_{q_\pi^d-1}(K_u^{\rm sep}) \overset{\sim}{\ra}  \bF^\times$ coming from the reduction map $\cO_{K_u^{\rm sep}} \thra \bar{\bF}_u$.
For a uniformizer $\varpi$ of $K_u$, choose a solution $\eta \in K_u^{\rm sep}$ to the equation 
$X^{q_\pi^d-1}- \varpi =0$ and define
\[
\omega_{d, K_u}:I_{K_u} \ra \mu_{q_\pi^d-1}(K_u^{\rm sep}) \overset{\sim}{\ra}  \bF^\times \ ; 
\sigma \mapsto \frac{\eta^\sigma}{\eta},
\]
which is independent of the choices of $\varpi$ and $\eta$.
The character $\omega_{d, K_u}$ factors through $I^{\rm t}$ (cf. \cite{Serre}).
We call the  $\Gal{\bF}{\bF_\pi}$-conjugates $(\omega_{d, K_u})^{q_\pi^i}$ for $0\leq i \leq d-1$ of $\omega_{d, K_u}$
  the {\it  fundamental characters} of level $d$.
It is easy to check that 
\[
(\omega_{d, K_u})^{1+q_\pi+\cdots + q_\pi^{d-1}} = \omega_{1, K_u}
\]
and $(\omega_{d, K_u})^{q_\pi^d-1}=1$.
For any  finite extension $L$ of $K_u$,
 we see that   $(\omega_{d, K_u})|_{I_L}=(\omega_{d,L})^{e(L/K_u)}$ by definition.

As an analogue of Serre's classical result on the mod $\ell$ cyclotomic character (\cite[Proposition 8]{Serre}), the following fact is known.

\begin{prop}[Kim {\cite[Proposition  9.4.3.\ (2)]{Kim}}]\label{fund}
The character $(\omega_{1, K_u})^{e_{u|\pi}}$ coincides with the mod $\pi$ Lubin-Tate character restricted  to $I_{K_u}$.
\end{prop}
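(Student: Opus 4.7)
The plan is to adapt Serre's classical argument for the mod $\ell$ cyclotomic character (\cite[Proposition 8]{Serre}) to the Lubin-Tate setting in characteristic $p$. Fix a Lubin-Tate formal $A_\pi$-module $G$ over $\cO_{F_\pi}$ associated with the uniformizer $\pi$, with standard multiplication-by-$\pi$ series of the form $[\pi]_G(X) = \pi X + \cdots + X^{q_\pi}$. By definition the mod $\pi$ Lubin-Tate character $\chi_{{\rm LT},\pi}:G_{F_\pi} \ra \bF_\pi^\times$ describes the Galois action on a nonzero $\pi$-torsion point $\lambda \in G[\pi](K_u^{\rm sep})$ via $\sigma(\lambda) = [\chi_{{\rm LT},\pi}(\sigma)]_G(\lambda) = \chi_{{\rm LT},\pi}(\sigma)\lambda + O(\lambda^{q_\pi})$. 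Since $v(\lambda)>0$, taking residues gives $\sigma(\lambda)/\lambda \equiv \chi_{{\rm LT},\pi}(\sigma)$ modulo the maximal ideal of $\cO_{K_u^{\rm sep}}$.

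First I would pin down the valuation of $\lambda$: the polynomial $[\pi]_G(X)/X$ is Eisenstein over $\cO_{F_\pi}$ with constant term $\pi$ and leading term $X^{q_\pi-1}$, so every nonzero $\lambda\in G[\pi]$ satisfies $\lambda^{q_\pi-1}=u\pi$ for some unit $u$ in the integral closure of $\cO_{F_\pi}$ in $K_u^{\rm sep}$. Descending to $K_u$, write $\pi = w\,\varpi^{e_{u|\pi}}$ with $w \in \cO_{K_u}^\times$ and recall the defining relation $\eta^{q_\pi-1}=\varpi$; combining these yields
\[
\lambda^{q_\pi-1} = u w \cdot (\eta^{e_{u|\pi}})^{q_\pi-1}.
\]
Hence $\mu := \lambda/\eta^{e_{u|\pi}}$ is a $(q_\pi-1)$-th root of a unit, so $\mu \in \cO_{K_u^{\rm sep}}^\times$.

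Second, for any $\sigma \in I_{K_u}$ I would factor
\[
\frac{\sigma(\lambda)}{\lambda} = \frac{\sigma(\mu)}{\mu}\cdot\Bigl(\frac{\sigma(\eta)}{\eta}\Bigr)^{e_{u|\pi}}.
\]
The inertia subgroup $I_{K_u}$ acts trivially on the residue field $\bar{\bF}_u$, and $\mu$ is a unit, so $\sigma(\mu)/\mu \equiv 1$ modulo the maximal ideal. Moreover $(\sigma(\eta)/\eta)^{q_\pi-1}=\sigma(\varpi)/\varpi=1$, so $\sigma(\eta)/\eta$ is a root of unity reducing to $\omega_{1,K_u}(\sigma)$. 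Combining this with the first paragraph forces the equality $\chi_{{\rm LT},\pi}(\sigma) = \omega_{1,K_u}(\sigma)^{e_{u|\pi}}$ in $\bF_\pi^\times$, as required.

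The main obstacle is formal: one must fix a Lubin-Tate series for which $[\pi]_G(X)/X$ is genuinely Eisenstein, and then check that the resulting character does not depend on this choice. This is standard once one knows that any two Lubin-Tate formal $A_\pi$-modules attached to the uniformizer $\pi$ of $F_\pi$ are isomorphic over $\cO_{F_\pi}$. An alternative, and perhaps cleaner, route in our function-field setting is to run the same computation directly for the Carlitz module $\cC$ of Example \ref{Carlitz}, which is a global Lubin-Tate module whose $\pi$-torsion polynomial $\cC_\pi(X)/X$ has constant term $\pi$, leading term $X^{q_\pi-1}$, and intermediate coefficients divisible by $\pi$; this identifies $\chi_\pi|_{G_{F_\pi}}$ with $\chi_{{\rm LT},\pi}$ and avoids developing Lubin-Tate theory abstractly.
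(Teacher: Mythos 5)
Your argument is correct. The paper itself supplies no proof of this proposition, deferring entirely to Kim's thesis, so your computation usefully fills that in; it is the natural Lubin--Tate adaptation of Serre's argument for the mod $\ell$ cyclotomic character. The chain $\lambda^{q_\pi-1}=u\pi$, $\pi=w\varpi^{e_{u|\pi}}$, $\eta^{q_\pi-1}=\varpi$ correctly reduces the assertion to the factorization $\sigma(\lambda)/\lambda=(\sigma(\mu)/\mu)\cdot(\sigma(\eta)/\eta)^{e_{u|\pi}}$, where the first factor reduces to $1$ because $\mu$ is a unit and inertia acts trivially on residues, and the second reduces to $\omega_{1,K_u}(\sigma)^{e_{u|\pi}}$ by definition. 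Two small remarks. First, the ``formal obstacle'' you raise at the end is not really there: even when $[\pi]_G(X)$ is an honest power series rather than a polynomial, the Lubin--Tate conditions $[\pi]_G(X)\equiv\pi X\pmod{\deg 2}$ and $[\pi]_G(X)\equiv X^{q_\pi}\pmod{\pi}$ together with a Newton polygon argument still give $\lambda^{q_\pi-1}=u\pi$ with $u$ a unit, so nothing is lost in generality; alternatively, your Carlitz route works, since $\cC_\pi(X)/X$ is an Eisenstein polynomial over $A_\pi$ with constant term $\pi$ and leading coefficient $1$ precisely because $\cC_\pi\equiv\tau^{\deg\pi}\pmod{\pi}$. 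Second, independence from the choice of Lubin--Tate series is immediate either from the Lubin--Tate isomorphism theorem or, more in the spirit of this paper, from the remark following the proposition identifying the mod $\pi$ Lubin--Tate character with $\chi_\pi|_{G_{K_u}}$, which is canonically attached to the Carlitz module.
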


\begin{rem}
The {\it  mod $\pi$ Lubin-Tate character} is the character describing  the $G_{K_u}$ action on the $\pi$-torsion points of   Lubin-Tate formal group over $\cO_{K_u}$ associated with  $\pi$. 
It coincides  with the mod $\pi$ Carlitz character $\chi_\pi$ restricted to $G_{K_u}$, so that $\chi_\pi=(\omega_{1, K_u})^{e_{u|\pi}}$ on $I_{K_u}$.
\end{rem}

Let $V$ be a $d$-dimensional irreducible $\bF_\pi$-representation of $I_{K_u}$.
Then the action of $I_{K_u}$ on $V$ factors through  $I^{\rm t}$,  so that  $V$ can be regarded as a representation of $I^{\rm t}$.
Using Schur's Lemma, we see that $\End{I^{\rm t}}{V}$ is a finite field of  order $q_\pi^d$.
Fix an isomorphism $f : \End{I^{\rm t}}{V} \overset{\sim}{\ra} \bF$ and regard $V$ as a one-dimensional $\bF$-representation
\[
\rho:I^{\rm t} \ra \End{I^{\rm t}}{V}^\times \overset{\sim}{\ra} \bF^\times
\]
of $I^{\rm t}$.
Since $I^{\rm t}$ is pro-cyclic and $\omega_{d, K_u}$ is surjective, there exists an integer 
$0 \leq j \leq q_\pi^{d}-2$ such that $\rho = (\omega_{d, K_u})^j$.
If we decompose $j=n_0 + n_1q_\pi + \cdots + n_{d-1}q_\pi^{d-1}$ with integers $0 \leq n_s \leq q_\pi-1$, then the set $\{ n_0, n_1,\ldots, n_{d-1} \}$ is independent of the choice of $f$.

\begin{definition}
These numbers $ n_0, n_1,\ldots,n_{d-1}$ are called {\it tame inertia weights} of $V$.
In general, for any $\bF_\pi$-representation $\rho: G_{K_u} \ra \GL{\bF_\pi}{V}$, the tame inertia weights of $\rho$ are the tame inertia weights of all the Jordan-H$\ddot{\rm o}$lder quotients of $V|_{I_{K_u}}$.

\end{definition}

Denote by $\mathrm{TI}_{K_u}(\rho)$ the set of tame inertia weights of $\rho:G_{K_u} \ra \GL{d}{\bF_\pi}$.

\subsection{Ramification of  constrained torsion points}
Let $\phi$ be a Drinfeld module over $K$ satisfying $[\phi] \in \sD(K,r,\pi)$
and $u$  a finite place of $K$ above $\pi$.
By Remark \ref{stablereduction}, we can take a finite separable extension $K_u'$ of $K_u$ such that $\phi$ has stable reduction and $e(K_u'/K_u)$ divides $\prod_{s=1}^r(q^s-1)$.
By  Tate uniformization, we obtain the exact sequence
\begin{align}\label{mrep}
0 \ra \psi[\pi] \ra \phi[\pi] \ra H{\otimes_A}\bF_\pi \ra 0
\end{align}
of $\bF_\pi[G_{K_u'}]$-modules.
We also take    a finite separable extension $L$ of $K_u'$ as in Proposition \ref{stable} and denote by $L_0$ the maximal tamely ramified extension of $K_u'$ in $L$.
Set 
\[
C_1=C_1(q,r):=(q^r-1)\prod_{s=1}^{r-1}(q^s-1)^2
\]
and $e_u:=e(L_0/K'_u) \cdot e(K_u'/F_\pi)$.
Then  $e_u$ divides $e_{u|\pi} C_1(q,r)$.

\begin{prop}\label{tame}
Every tame inertia weight of $\mrep{\phi}{\pi}|_{G_{L_0}}$ is between $0$ and $e_u$.
\end{prop}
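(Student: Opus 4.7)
The plan is to analyze the Tate uniformization sequence (\ref{mrep}) of $\mathbb{F}_\pi[G_{K_u'}]$-modules after restricting to $G_{L_0}$. Since the set of tame inertia weights of a representation depends only on the semisimplification of its restriction to $I_{L_0}$, and weights are additive on short exact sequences (as multisets), it will suffice to bound the tame weights of the sub $\psi[\pi]$ and of the quotient $H\otimes_A\mathbb{F}_\pi$ separately.

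For the quotient $H\otimes_A\mathbb{F}_\pi$, I would show that every tame inertia weight is $0$. Proposition \ref{stable} tells us that $G_L$ acts trivially on $H$, and since $L/L_0$ is wildly ramified, $\mathrm{Gal}(L/L_0)$ is a $p$-group, into which $I_{L_0}/I_L$ embeds. Hence $I_{L_0}$ acts on $H\otimes_A\mathbb{F}_\pi$ through a $p$-group, so the pro-prime-to-$p$ tame quotient $I^{\mathrm{t}}_{L_0}$ must act trivially. All weights coming from the quotient are therefore $0$.

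For the sub $\psi[\pi]$, the strategy has two stages: first bound the tame weights over $K_u'$ using the fact that $\psi$ has good reduction there, then transfer the bound down to $L_0$. By an analogue of Raynaud's theorem on finite flat group schemes adapted to Drinfeld modules, generalizing the rank-one case recorded in Proposition \ref{fund}, every tame inertia weight of $\psi[\pi]|_{I_{K_u'}}$ should lie in $[0, e(K_u'/F_\pi)]$. To pass to $I_{L_0}$ I would use the restriction formula $\omega_{d,K_u'}|_{I_{L_0}} = \omega_{d,L_0}^{e(L_0/K_u')}$: an irreducible constituent $\omega_{d,K_u'}^{j}$ of $\psi[\pi]|_{I_{K_u'}}$ becomes $\omega_{d,L_0}^{j\cdot e(L_0/K_u')}$ over $L_0$, and provided $e_u \le q_\pi - 1$ (which we may arrange, since $e_u$ depends only on $K$ and $r$ while $q_\pi$ grows with $\deg(\pi)$), the base-$q_\pi$ digits of the new exponent are just the old digits scaled by $e(L_0/K_u')$, landing in the interval $[0, e(K_u'/F_\pi) \cdot e(L_0/K_u')] = [0, e_u]$.

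The main obstacle is the good-reduction input, namely the Raynaud-type bound $[0, e(K_u'/F_\pi)]$ for the tame weights of $\psi[\pi]|_{I_{K_u'}}$. If a direct citation is not available in the precise form needed, the fallback plan is to use the connected--\'etale decomposition of the finite $A$-module scheme $\psi[\pi]$ over $\mathcal{O}_{K_u'}$: the \'etale part contributes only weight $0$, while the connected part is cut out by a formal Drinfeld module whose tame inertia action decomposes as products of fundamental characters, each controlled by the Lubin--Tate description in Proposition \ref{fund}, which yields the desired bound.
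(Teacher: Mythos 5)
Your overall strategy — split via the Tate uniformization sequence and bound the tame weights of the sub and quotient separately — is exactly the paper's. For the quotient $H\otimes_A\bF_\pi$ your argument is cleaner than the paper's: you observe directly from Proposition~\ref{stable} that $I_L$ acts trivially, so $I_{L_0}$ acts through the $p$-group $I_{L_0}/I_L$ of order $e(L/L_0)$, and hence $I^{\mathrm t}_{L_0}$ acts trivially; the paper instead invokes Gardeyn's $\tau$-sheaf theory to see $\bar\rho^{\rm ss}|_{I_L}=1$ and then runs a small congruence argument with $(\omega_{1,L_0})^j|_{I_L}=(\omega_{1,L})^{e(L/L_0)j}$, reaching the same conclusion. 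For the sub $\psi[\pi]$, the key good-reduction input you are missing a citation for is exactly Gardeyn's Theorem 2.14 (tame inertia weights of a Drinfeld module with good reduction over a local field $M$ lie in $[0,e(M/F_\pi)]$), which the paper applies \emph{directly over} $L_0$ to get $\mathrm{TI}_{L_0}(\bar\rho_{\psi,\pi})\subset[0,e_u]$ with no further work. Your detour — applying the bound over $K_u'$ and then transferring to $L_0$ via $\omega_{d,K_u'}|_{I_{L_0}}=\omega_{d,L_0}^{e(L_0/K_u')}$ — is salvageable but costs you: it needs $e_u\le q_\pi-1$ (so a lower bound on $\deg(\pi)$ that Proposition~\ref{tame} itself does not assume), and it requires you to check that the base-$q_\pi$ digit multiset is well-behaved under restriction even when the irreducible constituent decomposes over $L_0$ (true, since the digits of a Galois-periodic exponent reproduce the digits of the smaller-level exponent with multiplicity, but you should say so). Since $\psi$ also has good reduction over $L_0$, applying the Raynaud-type bound there directly removes both complications and matches the paper.
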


\begin{proof}
By (D2), the restriction
$
\bar{\rho}_{\phi, \pi}^{\rm ss}|_{I_{L_0}}$ is isomorphic to $(\omega_{1, L_0})^{j_1} \oplus \cdots \oplus (\omega_{1, L_0})^{j_r}
$, where $\{ j_1, \ldots, j_r \}=\mathrm{TI}_{L_0}(\mrep{\phi}{\pi})$.
Write $\bar{\rho}:G_{L_0} \ra \GL{h}{\bF_\pi}$ for the representation arising from $H{\otimes}_A\bF_\pi$.
Then the sequence (\ref{mrep}) implies $\bar{\rho}_{\phi, \pi}^{\rm ss} = \bar{\rho}_{\psi, \pi}^{\rm ss} \oplus \bar{\rho}_{}^{\rm ss}$ on $G_{L_0}$ , so that 
$\mathrm{TI}_{L_0}(\mrep{\phi}{\pi}) = \mathrm{TI}_{L_0}(\bar{\rho}_{\psi,\pi}) \cup \mathrm{TI}_{L_0}(\bar{\rho})$.
Let $\tilde{M}(\psi)$ and $\tilde{N}$ be analytic $\tau$-sheaves on $\tilde{\bA}_{L_0}^1$ attached to $\psi$ and $H$, respectively.
Since  $\tilde{M}(\psi)$ is good over $L_0$,
 we see that $\mathrm{TI}_{L_0}(\bar{\rho}_{\psi,\pi}) \subset [0, e_u]$ by \cite[Theorem 2.14]{Gar}.
On the other hand,  the analytic $\tau$-sheaf $\tilde{N}$ is of dimension zero and  good over $L$, so that 
 every tame inertia weight of $\bar{\rho}|_{G_L}$ is zero by  \cite[Theorem 2.14]{Gar}, which means that $\bar{\rho}_{}^{\rm ss}|_{I_L}=1$.
Hence  we see that 
\[
(\omega_{1,L_0})^j|_{I_{L}}=(\omega_{1,L})^{e(L/L_0) \cdot j}=1
\]
for any $j \in \mathrm{TI}_{L_0}(\bar{\rho})$,
so that  $e(L/L_0) \cdot j \equiv 0 \pmod {q_\pi-1}$ holds.
Since $e(L/L_0)$ is a $p$-power and  $0 \leq j \leq q_\pi-2$, we see that   $j=0$.
\end{proof}

The condition (D2) means that 
$
\bar{\rho}_{\phi, \pi}^{\rm ss} $
is isomorphic to 
$\chi_\pi^{i_1} \oplus \cdots \oplus \chi_\pi^{i_r}$ for 
 $0 \leq i_1,\ldots, i_r \leq q_\pi-1$.
By renumbering  $\{ j_1, \ldots, j_r\}$ if necessary,  Propositions \ref{fund} and \ref{tame} mean   that $\chi_\pi^{i_s}|_{I_{L_0}}=(\omega_{1,L_0})^{ i_s\cdot e_u}=(\omega_{1,L_0})^{j_s}$ for any $1 \leq s \leq r$ .
Thus we obtain 
\begin{align}\label{ee}
i_s \cdot e_u \equiv j_s \pmod {q_\pi-1}
\end{align}
for any $1 \leq s \leq r.$

For any finite place $v$ of $K$ not lying above $\pi$ and any integer $m$, denote by
$
P_{v,m}(T)=\det (T-\rep{\phi}{\pi}(\Frob{v}{m}) | T_\pi(\phi)) \in A[T]
$
the characteristic polynomial of $\Frob{v}{m}$.
Set 
\[
C_2=C_2(n_K^{}, q,r):=r \cdot n_K^{2} \cdot C_1(q,r).
\]
Then we obtain the following important proposition.

\begin{prop}\label{cong}
If $\deg(\pi) > C_2$, then $r$ divides $e_u$ and the congruence
\[
 i_s \cdot e_u \equiv \frac{e_u}{r} \pmod {q_\pi-1}
\]
holds for  any  $1 \leq s \leq r.$
\end{prop}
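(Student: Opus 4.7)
The goal is to extract the divisibility $r \mid e_u$ and the congruence $i_s e_u \equiv e_u/r \pmod{q_\pi-1}$ for every $s$. The plan combines the tame-inertia analysis already developed in this section with a Frobenius eigenvalue input at a carefully chosen auxiliary place.

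First I would consolidate the inertia-side data: from (\ref{ee}) we know $i_s e_u \equiv j_s \pmod{q_\pi-1}$ with $j_s \in [0, e_u]$ by Proposition \ref{tame}. The hypothesis $\deg \pi > C_2 = r n_K^2 C_1(q,r)$ forces $e_u \le e_{u|\pi} \cdot C_1(q,r) \le n_K \cdot C_1(q,r)$ to be far smaller than $q_\pi-1$, so each $j_s$ is pinned down as a small non-negative integer determined by its residue class. The next step is to produce an arithmetic identity relating the $i_s$.

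Second, I would select a finite place $v$ of $K$ lying over a finite place $\pi_0 \neq \pi$ of $F$, at which $\phi$ has good reduction (guaranteed by (D1)), with both $\deg \pi_0 \le n_K$ and $f_{v|\pi_0} \le n_K$, so that $\deg v \le n_K^2$. Such a $v$ exists by elementary counting of monic irreducibles of small degree in $A$ together with their factorizations in $K$. By Proposition \ref{good}, the roots $\alpha_1, \dots, \alpha_r \in \bar F$ of $P_v(T) \in A[T]$ all satisfy $|\alpha_s| = q_v^{1/r}$. On the other hand, (D2) combined with the Carlitz congruence of Example \ref{Carlitz} yields, after a suitable reindexing,
\[
P_v(T) \equiv \prod_{s=1}^r \bigl(T - \pi_0^{f_{v|\pi_0} i_s}\bigr) \pmod{\pi}.
\]

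Third, I would match the two descriptions of $P_v(T)$. Every coefficient of $P_v(T)$ is a symmetric function of the $\alpha_s$, hence has absolute value at most $q_v$ and so degree in $t$ at most $\deg v \le n_K^2$. Since $\deg \pi > C_2 \ge r n_K^2$, mod-$\pi$ congruences in this bounded-degree range are rigid, so the congruence above can be upgraded to tight numerical information. Comparing the infinity-place size $|\alpha_s| = q_v^{1/r}$ with the size $|\pi_0^{f_{v|\pi_0} i_s}| = q_v^{i_s}$ of its prescribed mod-$\pi$ residue, the only way $r$ algebraic numbers of fractional absolute value $q_v^{1/r}$ can carry these integer residues while remaining compatible with the Weil-type uniformity is for $i_s$ to have the "size" $1/r$ in an appropriate modular sense. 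Translating back through (\ref{ee}) this yields $i_s e_u \equiv e_u/r \pmod{q_\pi-1}$; since the left-hand side is an integer one concludes $r \mid e_u$.

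The main obstacle is the rigidity argument in the third step: promoting the mod-$\pi$ factorization of $P_v(T)$ to a genuine numerical constraint on the exponents $i_s$ given that $\alpha_s$ has fractional absolute value $q_v^{1/r}$ while its residue $\pi_0^{f_{v|\pi_0} i_s}$ has integer absolute value $q_v^{i_s}$. The calibration $C_2 = r n_K^2 C_1$ is exactly what is needed to guarantee that $\deg v$ and $e_u$ are both much smaller than $\deg \pi$, so that the mod-$\pi$ comparison is "sharp" enough to force the divisibility $r \mid e_u$ together with the stated uniform congruence on the $i_s$.
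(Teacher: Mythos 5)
There is a genuine gap in step 3, and it hides exactly the key idea of the paper's proof. You compare $P_v(T)$, the characteristic polynomial of $\mathrm{Fr}_v$, with the polynomial $\prod_{s=1}^r(T - \pi_0^{f_{v|\pi_0} i_s})$ coming from (D2). But the exponents $i_s$ are only controlled modulo $q_\pi - 1$ and may be as large as $q_\pi - 1$, so the coefficients of $\prod_s(T - \pi_0^{f_{v|\pi_0} i_s})$ can have degree in $t$ that vastly exceeds $\deg(\pi)$. Consequently there is no ``rigidity'': a mod-$\pi$ congruence between a small polynomial (the coefficients of $P_v(T)$) and a potentially enormous one gives no numerical equality, and the appeal to the $\infty$-place size of $\alpha_s$ versus $\pi_0^{f_{v|\pi_0} i_s}$ cannot be made precise, since $|\alpha_s| = q_v^{1/r}$ is a fractional power of $q_v$ while $|\pi_0^{f_{v|\pi_0} i_s}|$ is an integral power. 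The phrase ``$i_s$ has size $1/r$ in an appropriate modular sense'' is not a proof and does not yield $r \mid e_u$.

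The paper closes this gap by replacing $\mathrm{Fr}_v$ with $\mathrm{Fr}_v^{e_u}$, i.e., by working with $P_{v,e_u}(T)$ rather than $P_v(T)$. This is the step you are missing, and it does real work: the roots become $\alpha_s^{e_u}$ with $|\alpha_s^{e_u}| = q_v^{e_u/r}$, and on the mod-$\pi$ side the relevant residues become $\chi_\pi(\mathrm{Fr}_v)^{i_s e_u}$, which by the congruence (\ref{ee}) equal $\pi_v^{j_s}$ with $0 \le j_s \le e_u$ small (by Proposition \ref{tame}). Now \emph{both} $P_{v,e_u}(T)$ and $\prod_s(T - \pi_v^{j_s})$ have coefficients of absolute value at most $q^{r\, e_u\, f_{v|\pi_0}} \le q^{C_2} < |\pi|$, so the congruence upgrades to a genuine equality of polynomials. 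Matching the absolute values of the roots then forces $j_s = e_u/r$ for every $s$, which, since $j_s \in \mathbb{Z}$, gives $r \mid e_u$, and plugging back into (\ref{ee}) gives the stated congruence $i_s e_u \equiv e_u/r \pmod{q_\pi - 1}$. You did flag $j_s \in [0, e_u]$ in your first step, but you never routed it into the Frobenius comparison; without the passage to $\mathrm{Fr}_v^{e_u}$ the exponent reduction $i_s e_u \mapsto j_s$ is unavailable, and the argument cannot close.
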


\begin{proof}
Suppose  that $\deg(\pi)> C_2$.
Take a monic irreducible element $\pi_0 \in A $ with $\deg(\pi_0)=1$ and a finite place $v$ of $K$ above $\pi_0$.
Since   $\phi$ has good reduction at $v$ by (D1), the polynomial $P_{v,e_u}(T)$ is well-defined.
Now the  roots of $P_{v,e_u}(T)$ are given by $\{ \alpha_s^{e_u}\}_{s=1}^r$, where  $\{ \alpha_s \}_{s=1}^r$ are  the roots of $P_v(T)=P_{v,1}(T)$.
On the other hand, the condition (D2$)$ implies that the  roots of the polynomial
 $P_{v,e_u}(T) \pmod \pi$ in $\bF_\pi[T]$
are given by $\{ \chi_\pi(\Frob{v}{})^{i_s \cdot e_u} \}_{s=1}^r$.
Set $\pi_v:=\pi_0^{f_{v|\pi_0}}$.
By the above relation (\ref{ee}), we see that  $\chi_\pi(\Frob{v}{})^{i_s\cdot e_u}=\chi_\pi(\Frob{v}{})^{j_s}$
 for any $1 \leq s \leq r$.
Since $\chi_\pi(\Frob{v}{})^{j_s} \equiv \pi_v^{j_s} \pmod \pi$ holds by Example \ref{Carlitz}, we obtain
\begin{equation}\label{eigen}
P_{v,e_u}(T) \equiv \prod_{s=1}^r(T-\chi_\pi(\Frob{v}{})^{j_s}) \equiv \prod_{s=1}^r(T-\pi_v^{j_s}) \pmod \pi.
\end{equation}
Denote by  $S_k(x_1,\ldots,x_r)$  the fundamental symmetric polynomial of degree $k$ with
$r$ variables $x_1,\ldots,x_r$ for $0 \leq k \leq r$.
Then 
\[
\prod_{s=1}^r(T-x_s)=\sum_{k=0}^r(-1)^k S_k(x_1,\ldots,x_r)T^{r-k}.
\]
Now  $|\alpha_s^{e_u}| =q_v^{e_u/r}$ for any $1 \leq s \leq r$ by Proposition \ref{good}.
For any $0 \leq k \leq r$, we obtain

\begin{eqnarray}
\left|S_k(\alpha_1^{e_u},\ldots,\alpha_r^{e_u} )  - S_k(\pi_v^{j_1},\ldots,\pi_v^{j_r})\right| 
&\leq& \underset{1 {\leq} s_1 {<} {\cdots} {<} {s_k} {\leq} r}{\mathrm{max}} \left\{ q_v^\frac{k \cdot e_u}{r}, q_v^{j_{s_1}+\cdots+j_{s_k}} 
 \right\} \nonumber \\
&\leq&q_v^{k \cdot e_u} \nonumber \\ 
&\leq& q_v^{r \cdot e_u} =q^{r\cdot e_u \cdot f_{v|\pi_0}}_{}\nonumber
\end{eqnarray}
since $j_s \leq e_u$ for each $s$ by Proposition \ref{tame}. 
Since $e_u$ divides $e_{u|\pi}\cdot C_1(q,r)$ and both $e_{u|\pi}$ and $f_{v|\pi_0}$ are less than or equal to  $n_K=[K:F]$,
 we see that
\[
q^{r \cdot e_u \cdot f_{v|\pi_0}} \leq q^{C_2}<q^{\deg(\pi)}=|\pi|,
\]
which means that all absolute values of coefficients of  $P_{v,e_u}(T)-\prod_{s=1}^r(T-\pi_v^{j_s})$
are smaller than $|\pi|$.
Therefore the congruence  (\ref{eigen}) implies $P_{v,e_u}(T)=\prod_{s=1}^r(T-\pi_v^{j_s})$.
Comparing the absolute values of the roots of $P_{v,e_u}(T)$ and $\prod_{s=1}^r(T-\pi_v^{j_s})$,
 we see that
${e_u/r}=j_s$  for any $1 \leq s \leq r$, which implies the  conclusion.
\end{proof}

Set $e_{\phi}:={\gcd}\{ e_u ; \ u|\pi \}$ and $\bS_r:=\{ {\bf s}=(s_1, \ldots , s_r) \in \bZ^{r} ; 1 \leq s_k \leq r\}$.

\begin{lem}\label{e}
Suppose that $\deg(\pi)>C_2$.

$(1)$ $e_\phi| n_K^{} \cdot  C_1(q,r)$. If $\pi$ is unramified in $K_{\rm s}$, then $e_\phi | [K:F]_{\rm i} \cdot C_1(q,r)$.

$(2)$ For any 
$(s_1, \ldots , s_r) \in \bS_r$, the relation $e_\phi \cdot (i_{s_1} + \cdots + i_{s_r}-1) \equiv 0 \pmod {q_\pi-1}$ 
holds.
\end{lem}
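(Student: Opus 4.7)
The key observation is that the congruence in Proposition \ref{cong} holds at every place $u \mid \pi$ individually, and both parts of the lemma amount to extracting a ``global'' consequence by combining the local information at all such $u$ via elementary divisibility / Bezout-type arguments.

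\textbf{Part (2).} I start here because it is the more substantive part. Fix $u \mid \pi$. Proposition \ref{cong} gives $r \mid e_u$ together with $i_s \cdot e_u \equiv e_u/r \pmod{q_\pi-1}$ for every $1 \leq s \leq r$. Given any tuple $(s_1,\ldots,s_r) \in \bS_r$, summing these $r$ congruences yields
\[
e_u \cdot (i_{s_1} + \cdots + i_{s_r}) \;\equiv\; r \cdot (e_u/r) \;=\; e_u \pmod{q_\pi-1},
\]
so $e_u \cdot (i_{s_1}+\cdots+i_{s_r}-1) \equiv 0 \pmod{q_\pi-1}$ for every $u \mid \pi$. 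Writing $X := i_{s_1}+\cdots+i_{s_r}-1$, the relation $e_u X \equiv 0 \pmod{q_\pi-1}$ for each $u$ forces the same for any $\bZ$-linear combination of the $e_u$; in particular for $e_\phi = \gcd\{e_u : u \mid \pi\}$ via Bezout. This yields part (2).

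\textbf{Part (1).} By construction $e_u = e(L_0/K_u') \cdot e(K_u'/F_\pi)$, and the factorization $e(K_u'/F_\pi) = e(K_u'/K_u) \cdot e_{u\mid\pi}$ combined with the bounds $e(K_u'/K_u) \mid \prod_{s=1}^{r}(q^s-1)$ (Remark \ref{stablereduction}) and $e(L_0/K_u') \mid \prod_{s=1}^{r-1}(q^s-1)$ (Proposition \ref{stable}) shows
\[
e_u \;\Big|\; e_{u\mid\pi} \cdot C_1(q,r).
\]
Since $e_\phi \mid e_u$, this gives $e_\phi \mid e_{u\mid\pi}\cdot C_1(q,r)$ for every $u \mid \pi$. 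Then $e_\phi$ divides every $\bZ$-linear combination of the $e_{u\mid\pi}\cdot C_1(q,r)$; in particular, the combination weighted by residue degrees,
\[
\sum_{u \mid \pi} f_{u\mid\pi} \cdot \bigl(e_{u\mid\pi}\cdot C_1(q,r)\bigr) \;=\; C_1(q,r) \cdot \sum_{u\mid\pi} e_{u\mid\pi} f_{u\mid\pi} \;=\; n_K \cdot C_1(q,r),
\]
establishing $e_\phi \mid n_K \cdot C_1(q,r)$. For the refined statement, suppose $\pi$ is unramified in $K_{\rm s}$. Fix $u \mid \pi$ and let $w$ be the place of $K_{\rm s}$ below $u$; then $e(w/\pi) = 1$. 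Since $K/K_{\rm s}$ is purely inseparable of degree $[K:F]_{\rm i}$, there is a unique $u$ above $w$, and the residue extension $\bF_u/\bF_w$ is purely inseparable over a perfect (finite) field, hence trivial; consequently $e(u/w) = [K_u : (K_{\rm s})_w] = [K:F]_{\rm i}$. Therefore $e_{u\mid\pi} = [K:F]_{\rm i}$ uniformly in $u$, and the bound $e_u \mid [K:F]_{\rm i} \cdot C_1(q,r)$ passes immediately to $e_\phi$.

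\textbf{Expected obstacle.} There is no serious analytic obstacle here: once Proposition \ref{cong} is in hand, both parts are formal manipulations. The only mild care needed is the purely inseparable local computation in the last step (to justify $e_{u\mid\pi} = [K:F]_{\rm i}$ in the unramified-in-$K_{\rm s}$ case) and, for part (2), the Bezout passage from ``$e_u X \equiv 0 \pmod{q_\pi-1}$ for all $u$'' to ``$e_\phi X \equiv 0 \pmod{q_\pi-1}$''.
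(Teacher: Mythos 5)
Your proof is correct and follows essentially the same route as the paper's: both use the decomposition of $e_{u\mid\pi}$ through $K_{\rm s}$ for part (1) and the congruence from Proposition \ref{cong} summed over $s_1,\ldots,s_r$ for part (2). The only difference is that you spell out the gcd/Bezout passage from "$e_u X \equiv 0$ for all $u\mid\pi$" (resp.\ "$e_\phi \mid e_{u\mid\pi}C_1$ for all $u$") to the corresponding statement for $e_\phi$ (resp.\ for $n_K C_1$ via $\sum_u e_{u\mid\pi} f_{u\mid\pi}=n_K$), which the paper leaves implicit when it writes $i_s \cdot e_\phi \equiv e_\phi/r$ directly.
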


\begin{proof}
Let $u$ be a finite place of $K$ above $\pi$ and $u_0$ the  place of $K_{\rm s}$ below $u$.
Then $e_{u|\pi}=e_{u_0|\pi}[K:F]_{\rm i}$ since $u_0$ is totally ramified in $K$ if $K \neq K_{\rm s}$.
Hence (1) follows from the relation $e_u|e_{u|\pi}C_1(q,r)$.
By Proposition \ref{cong}, we see that $i_s \cdot e_\phi \equiv \frac{e_\phi}{r} \pmod {q_\pi-1}$.
Adding this congruence for $s_1,\ldots,s_r$ gives
\[
e_\phi \cdot (i_{s_1}+ \cdots +i_{s_r}) \equiv e_\phi \pmod  {q_\pi-1},
\]
which proves (2).
\end{proof}

There exist only finitely many places of $F$ which are ramified in $K_{\rm s}$.
Define  $C_3=C_3(K_{\rm s})$ to be  the maximal degree of such places  and set
\[
C_4=C_4(n_K^{},q,r,K_{\rm s}) := \max \{C_2(n_K^{}, q,r),C_3(K_{\rm s})\}.
\] 

\begin{thm}\label{main1}
Suppose that  $r=p^\nu>1$ and $r$ does not divide $[K:F]_{\rm i}$.
If $\deg(\pi) > C_4$, then  the set $\sD(K,r,\pi)$ is empty.
\end{thm}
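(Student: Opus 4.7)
The plan is to argue by contradiction. Suppose $[\phi]\in\sD(K,r,\pi)$ with $\deg(\pi)>C_4$, and aim to derive the conclusion $r\mid [K:F]_{\mathrm{i}}$, which contradicts the standing hypothesis.

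First I would extract the divisibility $r\mid e_\phi$: since $\deg(\pi)>C_4\geq C_2$, Proposition \ref{cong} gives $r\mid e_u$ for every finite place $u$ of $K$ above $\pi$, and hence $r\mid e_\phi=\gcd\{e_u:u\mid\pi\}$.

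Next I would bound the $p$-part of $e_\phi$ using the separable-inseparable decomposition. Because $\deg(\pi)>C_4\geq C_3(K_{\mathrm{s}})$, the place $\pi$ is unramified in $K_{\mathrm{s}}/F$, so Lemma \ref{e}(1) applies and gives
\[
e_\phi\ \Big|\ [K:F]_{\mathrm{i}}\cdot C_1(q,r).
\]
Now $q=p^m$, so each factor $q^s-1$ is coprime to $p$; hence $C_1(q,r)=(q^r-1)\prod_{s=1}^{r-1}(q^s-1)^2$ is coprime to $p$. Consequently, the $p$-part of $e_\phi$ divides the $p$-part of $[K:F]_{\mathrm{i}}$, which, being purely inseparable of degree a $p$-power, equals $[K:F]_{\mathrm{i}}$ itself.

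Finally I would combine the two facts. Since $r=p^\nu$ is a $p$-power and $r\mid e_\phi$, the integer $r$ divides the $p$-part of $e_\phi$, hence divides $[K:F]_{\mathrm{i}}$. This contradicts the assumption that $r$ does not divide $[K:F]_{\mathrm{i}}$, and therefore $\sD(K,r,\pi)=\varnothing$ once $\deg(\pi)>C_4$. The main technical input is all borrowed from earlier results: the real work is already done in Proposition \ref{cong} (the congruence $i_s\cdot e_u\equiv e_u/r\pmod{q_\pi-1}$, which forces $r\mid e_u$) and in Lemma \ref{e}(1) (controlling $e_\phi$ by the inseparable degree when $\pi$ is unramified in $K_{\mathrm{s}}$); the present theorem is the clean $p$-power extraction that puts those pieces together. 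No new obstacle arises beyond ensuring the threshold $C_4$ simultaneously exceeds $C_2$ (to invoke Proposition \ref{cong}) and $C_3(K_{\mathrm{s}})$ (to guarantee unramifiedness in $K_{\mathrm{s}}$), which is built into the definition of $C_4$.
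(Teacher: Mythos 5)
Your proof is correct and follows essentially the same route as the paper: invoke Proposition \ref{cong} (valid since $\deg(\pi)>C_4\geq C_2$) to get $r\mid e_u$ for every $u\mid\pi$ and hence $r\mid e_\phi$, then invoke Lemma \ref{e}(1) (valid since $\deg(\pi)>C_4\geq C_3$ forces $\pi$ unramified in $K_{\rm s}$) to get $e_\phi\mid [K:F]_{\rm i}\cdot C_1(q,r)$, and finally use that $C_1(q,r)$ is prime to $p$ together with $r=p^\nu$ to conclude $r\mid [K:F]_{\rm i}$, a contradiction. The only difference is that you spell out the $p$-part comparison a little more explicitly than the paper does, but the logical content is identical.
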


\begin{proof}
Assume that $[\phi] \in \sD(K,r,\pi)$ and $\deg(\pi) > C_4$.
Then $\pi$ is unramified in $K_{\rm s}$ by $\deg(\pi)>C_3$.
 Proposition \ref{cong} and Lemma \ref{e} imply that
$r $ divides $ [K:F]_{\rm i}\cdot C_1(q,r)$.
The integer $C_1(q,r)$ is prime to $p$ and so $r$ must divide $[K:F]_{\rm i}$, which contradicts  the assumption on $r$. 
\end{proof}

\begin{rem}
In Section 5, we see that if $r$ divides $[K:F]_{\rm i}$, then
$\sD(K,r,\pi)$ is not empty.
\end{rem}

\section{Observations at places with small degree}
In this section,
using Propositions \ref{mth} and \ref{mv}, we prove Theorem \ref{main2} on the emptiness  of $\sD(K,r,\pi)$ when $r$ is not a $p$-power.
In this case, we also prove its uniform version (Theorem \ref{uniform}).

\subsection{Effective Chebotarev density theorem} 
Recall some basic facts on function field arithmetic.
Let $L$ be an algebraic extension of $K$.
The {\it constant field}  $\bF_L$ of $L$ is the algebraic closure of $\bF_q$ in $L$.
If $L=\bF_LK$, then $L$ is called a {\it constant field extension} of $K$, which 
is unramified at any places (\cite[Proposition 8.5]{Ros}).
If $\bF_L=\bF_K$, then $L$ is called a {\it geometric extension} of $K$.
In general, the field $\bF_LK$ is the maximal constant extension of $K$ in $L$ and the extension $L/\bF_LK$  
 is geometric.
Set $[L:K]_{\rm g}:=[L:\bF_LK]$ if $L/K$ is finite, which is called the {\it geometric extension degree} of $L/K$.
For example, for any $a \in A$,  the field $F(\cC[a])$ arising from the Carlitz module is a geometric extension of $F$. 

Denote by $\mathrm{Div}(K)$ the divisor group of $K$, that is, the free abelian group generated by all places of $K$.
We write divisors additively, so that a typical divisor is of the form $D=\sum_vn_vv$.
The notation  $v \not\in D$ means that  $n_v=0$.
The {\it degree} of a place $v$ of $K$ is defined by  $\deg_Kv:=[\bF_v:\bF_K]$ and it is extended to any divisor $D=\sum_vn_vv$ by $\deg_KD=\sum_vn_v\deg_Kv$.
The degree $\deg_F\pi$
 of a finite place $\pi$ of $F$ is exactly the degree $\deg(\pi)$ as a polynomial. 

Suppose that $L$ is a finite separable extension of $K$.
Then the {\it conorm map} $i_{L/K}:\mathrm{Div}(K) \ra \mathrm{Div}(L)$ is defined to be the linear extension of 
\[
i_{L/K}v=\sum_{w|v}e_{w|v}w,
\]
 where     $v$ is a place of $K$.
The following is known (cf.\  \cite[Proposition 7.7]{Ros}).

\begin{lem}\label{deg}
Let $w$ be a place of $L$ above a place $v$ of $K$ and $D \in \mathrm{Div}(K)$.
Then
\[
\deg_Li_{L/K}D=[L:K]_{\rm g}\deg_KD \ \mbox{and}\ \deg_Lw=\frac{f_{w|v}}{[\bF_L:\bF_K]}\deg_Kv.
\]
\end{lem}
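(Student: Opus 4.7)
The plan is to prove the two identities in reverse order, deriving the first from the second via linearity and the classical $\sum ef = n$ identity for finite separable extensions of function fields.

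First I would handle the residue-field formula $\deg_L w = \frac{f_{w|v}}{[\bF_L:\bF_K]}\deg_K v$. By definition, $\deg_L w = [\bF_w:\bF_L]$ and $\deg_K v = [\bF_v:\bF_K]$, while $f_{w|v} = [\bF_w:\bF_v]$. Since $\bF_K \subseteq \bF_v \subseteq \bF_w$ (constants reduce into constants, and $v$ lies below $w$) and also $\bF_K \subseteq \bF_L \subseteq \bF_w$ (the constant field $\bF_L$ embeds injectively into the residue field $\bF_w$ via reduction on the valuation ring of $L_w$), the tower law applied two different ways to $[\bF_w:\bF_K]$ yields
\[
[\bF_w:\bF_L]\cdot[\bF_L:\bF_K] = [\bF_w:\bF_v]\cdot[\bF_v:\bF_K],
\]
which rearranges to the desired formula.

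Next I would prove the conorm formula $\deg_L i_{L/K}D = [L:K]_{\rm g}\deg_K D$. By linearity of both sides it suffices to take $D = v$ for a single place $v$ of $K$. Expanding $i_{L/K}v = \sum_{w|v} e_{w|v}w$ and substituting the formula just proved gives
\[
\deg_L i_{L/K}v = \sum_{w|v} e_{w|v}\,\frac{f_{w|v}}{[\bF_L:\bF_K]}\,\deg_K v = \frac{\deg_K v}{[\bF_L:\bF_K]}\sum_{w|v} e_{w|v}f_{w|v}.
\]
Since $L/K$ is finite separable, the fundamental identity $\sum_{w|v}e_{w|v}f_{w|v}=[L:K]$ holds, so the right-hand side equals $\frac{[L:K]}{[\bF_L:\bF_K]}\deg_K v$. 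It then remains to identify $[L:K]/[\bF_L:\bF_K]$ with $[L:K]_{\rm g}=[L:\bF_L K]$. For this I would observe $\bF_L\cap K = \bF_K$ (both sides consist of elements of $K$ algebraic over $\bF_q$), whence $[\bF_L K:K]=[\bF_L:\bF_K]$, and then apply the tower $[L:K] = [L:\bF_L K]\cdot [\bF_L K:K]$.

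The argument is essentially bookkeeping; there is no single ``hard part,'' but the one place that requires care is the identification $[\bF_L K:K]=[\bF_L:\bF_K]$, which ultimately rests on the linear disjointness of $\bF_L$ over $\bF_K$ from $K$ inside $\bar{F}$ (equivalently $\bF_L\cap K = \bF_K$). Once this is in hand, the lemma reduces to combining the tower law for residue fields with $\sum ef = n$.
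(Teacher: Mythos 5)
Your proof is correct and is essentially the standard argument; the paper itself supplies no proof and simply cites Rosen's Proposition 7.7, whose proof is the same bookkeeping you carry out. Both halves are fine: the residue-field formula is indeed just the tower law applied to $[\bF_w:\bF_K]$ through the two intermediate fields $\bF_v$ and $\bF_L$, and the conorm formula follows by linearity, the fundamental identity $\sum_{w\mid v} e_{w|v}f_{w|v}=[L:K]$, and the identification $[L:K]/[\bF_L:\bF_K]=[L:\bF_L K]$. The one place you flag as ``requiring care'' deserves one extra clause: the inference from $\bF_L\cap K=\bF_K$ to $[\bF_L K:K]=[\bF_L:\bF_K]$ uses that $\bF_L/\bF_K$ is a \emph{Galois} extension (it is a finite extension of finite fields), since for non-normal extensions a trivial intersection does not by itself give linear disjointness. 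With that clause inserted the argument is complete.
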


For any place $w$ of $L$ above a place $v$ of $K$, denote by $\mathfrak{p}_w$ the maximal ideal of $\cO_{L_w}$ and let $\delta_w$ 
the exact power
 of $\mathfrak{p}_w$ dividing the different of $\cO_{L_w}$ over $\cO_{K_v}$. 
Then it satisfies $\delta_w \geq e_{w|v}-1$ with equality holding if and only if $p$ 
 does not divide $e_{w|v}$.
Define  the {\it ramification divisor} of $L/K$ by
$
\cD_{L/K}=\sum_w\delta_ww.$
For any intermediate field $K'$ of $L/K$, we see that 
\[
\cD_{L/K}=\cD_{L/K'} + i_{L/K'}\cD_{K'/K}
\]
(for example, see \cite[Chapter III 4]{Serre2}).
Hence  $\cD_{L/K'} \leq \cD_{L/K}$ holds.
 In addition, the following holds (cf.\  {\cite[Lemma 2.6]{CL}}).

\begin{lem}\label{d}
Let $L/K$ and $L'/K$ be finite separable extensions.
Then
\[
\cD_{LL'/K} \leq i_{LL'/L}\cD_{L/K} + i_{LL'/L'}\cD_{L'/K}.
\]
\end{lem}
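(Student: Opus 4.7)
The plan is to reduce the desired inequality to a local, place-by-place claim via the transitivity formula for the different. Applied to the tower $K \subset L \subset LL'$, the transitivity formula (recalled just above the lemma) gives
\[
\cD_{LL'/K} = \cD_{LL'/L} + i_{LL'/L} \cD_{L/K}.
\]
Subtracting $i_{LL'/L}\cD_{L/K}$ from both sides of the lemma therefore reduces the problem to proving
\[
\cD_{LL'/L} \leq i_{LL'/L'}\cD_{L'/K},
\]
which is the geometric statement that base-changing $L'/K$ along $L/K$ cannot increase the ramification divisor.

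For this reduced inequality I would argue one place at a time. Fix a place $\mathfrak{P}$ of $LL'$ and let $\mathfrak{p}$, $\mathfrak{p}'$, $\mathfrak{q}$ be its restrictions to $L$, $L'$, $K$. Because all residue fields in sight are finite, $\cO_{L'_{\mathfrak{p}'}}$ is monogenic over $\cO_{K_{\mathfrak{q}}}$: choose $\alpha \in \cO_{L'_{\mathfrak{p}'}}$ with $\cO_{L'_{\mathfrak{p}'}} = \cO_{K_{\mathfrak{q}}}[\alpha]$ and let $f \in \cO_{K_{\mathfrak{q}}}[X]$ be its minimal polynomial. Since $(LL')_{\mathfrak{P}} = L_{\mathfrak{p}} \cdot L'_{\mathfrak{p}'} = L_{\mathfrak{p}}(\alpha)$, the minimal polynomial $g \in \cO_{L_{\mathfrak{p}}}[X]$ of $\alpha$ over $L_{\mathfrak{p}}$ divides $f$ in $\cO_{L_{\mathfrak{p}}}[X]$ by Gauss's lemma. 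Writing $f = gh$ and differentiating yields $f'(\alpha) = g'(\alpha) h(\alpha)$, so $g'(\alpha) \mid f'(\alpha)$ in $\cO_{(LL')_{\mathfrak{P}}}$.

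The local different of $L'_{\mathfrak{p}'}/K_\mathfrak{q}$ equals $(f'(\alpha))$ by the monogenic choice of $\alpha$, while the local different of $(LL')_{\mathfrak{P}}/L_\mathfrak{p}$ is contained in the ideal $(g'(\alpha))$. Comparing $\mathfrak{P}$-adic valuations gives
\[
\delta_{\mathfrak{P}}^{LL'/L} \leq v_{\mathfrak{P}}(g'(\alpha)) \leq v_{\mathfrak{P}}(f'(\alpha)) = e_{\mathfrak{P}|\mathfrak{p}'} \cdot \delta_{\mathfrak{p}'}^{L'/K},
\]
where $\delta_{\star}^{M/N}$ denotes the different exponent of $M/N$ at a place $\star$ of $M$. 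Summing over all $\mathfrak{P}$ of $LL'$ then yields the reduced inequality, and hence the lemma. The main technical point I would want to double-check is the monogenic generation of $\cO_{L'_{\mathfrak{p}'}}$ as an $\cO_{K_{\mathfrak{q}}}$-algebra; this is standard for finite separable extensions of complete discrete valuation rings with perfect (in particular finite) residue field, so it applies throughout the function field setting, and once it is in place the divisibility $g \mid f$ together with the chain rule does the rest.
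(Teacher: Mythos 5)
The paper itself offers no proof: it cites Chen and Lee \cite[Lemma 2.6]{CL} for this statement, so your argument is a standalone replacement rather than a variant of the paper's. It is correct. You use the transitivity formula $\cD_{LL'/K} = \cD_{LL'/L} + i_{LL'/L}\cD_{L/K}$ (recalled in the text just above the lemma) to reduce to $\cD_{LL'/L} \leq i_{LL'/L'}\cD_{L'/K}$, then settle this place by place: complete DVRs with finite residue field are monogenic, so $\cO_{L'_{\mathfrak{p}'}} = \cO_{K_\mathfrak{q}}[\alpha]$ and $\mathfrak{d}_{L'_{\mathfrak{p}'}/K_\mathfrak{q}} = (f'(\alpha))$; the compositum $(LL')_\mathfrak{P} = L_\mathfrak{p}(\alpha)$; Gauss's lemma gives the factorization $f = gh$ in $\cO_{L_\mathfrak{p}}[X]$; and $f'(\alpha) = g'(\alpha)h(\alpha)$ yields $g'(\alpha) \mid f'(\alpha)$. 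This is exactly the standard proof and is what $\cite{CL}$'s Lemma 2.6 amounts to. One direction-of-containment slip worth fixing in the write-up: the local different $\mathfrak{d}_{(LL')_\mathfrak{P}/L_\mathfrak{p}}$ \emph{contains} the ideal $(g'(\alpha))$ (equivalently, $(g'(\alpha))\cO_{(LL')_\mathfrak{P}} = \mathfrak{c}\cdot\mathfrak{d}$ with $\mathfrak{c}$ the conductor of $\cO_{L_\mathfrak{p}}[\alpha]$), not the reverse as written; but this is what produces the valuation inequality $\delta_{\mathfrak{P}}^{LL'/L} \le v_\mathfrak{P}(g'(\alpha))$ you use in the next line, so the conclusion is unaffected. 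Summing $\delta_{\mathfrak{P}}^{LL'/L} \le e_{\mathfrak{P}|\mathfrak{p}'}\,\delta_{\mathfrak{p}'}^{L'/K}$ over all $\mathfrak{P}$ and grouping by $\mathfrak{p}'$ indeed reproduces $i_{LL'/L'}\cD_{L'/K}$, completing the argument.
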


Now let $E$ be a finite Galois extension of $K$ and $v$ a place of $K$ unramified in $E$.
For any place $w$ of $E$ above $v$, denote by $\mathrm{Fr}_{w|v}$ the Frobenius element in  $ \Gal{E}{K}$.
These  elements consist a conjugacy class 
\[
\FC{E}{K}{v}:=\{ \Fr{w}{v} ;   w|v\ \}
\]
in $\Gal{E}{K}$.
Define $\Sigma_{E/K}$ to be the divisor of $K$ that is the sum of all ramified places of $K$ in $E$.
 As a consequence of  the effective version of Chebotarev density theorem \cite[Theorem 1]{KS}, the following holds.

\begin{prop}[Chen and Lee {\cite[Corollary 3.4]{CL}}]\label{CL}
Let $E/K$ be a finite Galois extension and $\Sigma$ a divisor of $K$ such that 
$\Sigma \geq \Sigma_{E/K}$.
Set $d_0:=[\bF_K:\bF_q]$ and $d:=[\bF_E:\bF_K]$.
Define the constant $B=B(E/K, \Sigma)$ by 
\[
B=\max\{\deg_K\Sigma, \deg_E\cD_{E/\bF_EK}, 2[E:\bF_EK]-2, 1 \}.
\]
Then for any nonempty conjugacy class $\sC$ in $\Gal{E}{K}$, there exists a place $v$ of $K$ 
with $v \notin \Sigma$ such that 
\begin{itemize}
\item $\sC=\FC{E}{K}{v}$, 
\item $\deg_Kv \leq \frac{4}{d_0} \log_q\frac{4}{3}(B+3g_K+3) + d$,
\end{itemize}
where $g_K$ is the genus of $K$.
\end{prop}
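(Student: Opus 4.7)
The plan is to derive the statement from the effective Chebotarev density theorem of Kumer and Scherk \cite[Theorem 1]{KS}, which provides, for a finite Galois extension of function fields and a conjugacy class $\sC$ in its Galois group, an explicit lower bound of the shape
\[
\# \bigl\{ v : \deg_K v = n,\ \FC{E}{K}{v} = \sC \bigr\} \;\geq\; \frac{|\sC|}{|\Gal{E}{K}|}\cdot \frac{q^{d_0 n}}{n} \;-\; (\text{Weil error term}),
\]
with the error controlled by the genus of $E$ and the ramification of $E/K$. The task is then to pin down the smallest degree $n$ for which this lower bound is strictly positive; once such an $n$ is produced, any place $v$ of $K$ realizing $\sC$ automatically lies outside $\Sigma$ since $\Sigma$ already dominates $\Sigma_{E/K}$ and only ramified places could possibly be excluded.

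First I would dispose of the constant-field subtlety. Setting $K' := \bF_E K$, the short exact sequence
\[
1 \to \Gal{E}{K'} \to \Gal{E}{K} \to \Gal{K'}{K} \to 1
\]
identifies $\Gal{K'}{K}$ with $\Gal{\bF_E}{\bF_K}$, which is cyclic of order $d$. Any conjugacy class $\sC$ in $\Gal{E}{K}$ projects to a single element $\sigma$ of this cyclic quotient, and $\FC{E}{K}{v}$ lies in $\sC$ only if its image in $\Gal{\bF_E}{\bF_K}$ equals $\sigma$; equivalently, $\deg_K v$ must lie in one fixed residue class modulo the order of $\sigma$. This residue-class constraint is exactly the source of the additive $+d$ in the final bound: after the main-term-beats-error-term threshold is crossed, one may have to increase $n$ by up to $d-1$ to land in the admissible residue class.

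Next, I would bound the Weil error via the Riemann Hypothesis for the smooth projective model of $E$ over $\bF_E$, whose genus $g_E$ is controlled by Riemann--Hurwitz applied to the geometric sub-extension $E/K'$,
\[
2 g_E - 2 \;=\; [E:K']\,(2 g_{K'} - 2) \;+\; \deg_E \cD_{E/K'},
\]
together with the fact that $g_{K'} = g_K$ since constant-field extensions preserve genus. The error term is dominated by a quantity of order $\sqrt{g_E}\cdot q^{d_0 n/2}$, and the constant $B$ is engineered precisely to package the inputs: $2[E:K']-2$ contributes the dimension, $\deg_E \cD_{E/K'}$ the geometric ramification, $\deg_K\Sigma$ absorbs any additional wild ramification of $E/K$ not visible from $E/K'$, and the safeguard $\geq 1$ prevents logarithm issues. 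Comparing the main term $|\sC|\, q^{d_0 n}/(n\,|\Gal{E}{K}|)$ against the Weil error and solving for $n$ yields the asserted bound $\frac{4}{d_0}\log_q \frac{4}{3}(B+3g_K+3)+d$; the explicit coefficients $4$, $4/3$, and $3g_K+3$ fall out of substituting the numerical constants from \cite[Theorem 1]{KS}.

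The main obstacle, I expect, will be the bookkeeping required to convert Kumer--Scherk's formulation (for a single Galois extension with its own intrinsic genus and conductor) into the relative form needed here, where the base is $K$ but the geometric curve naturally lives over $K'=\bF_E K$. Carrying the constant-field shift and the residue-class restriction on $\deg_K v$ through the argument simultaneously, while losing no more than the additive $+d$ in the final bound, is the delicate step; the decision to charge the ``extra'' ramification into $\Sigma$ rather than into $\cD_{E/K'}$ is what keeps the ramification contributions to $B$ cleanly separated and the final estimate logarithmic in $B$.
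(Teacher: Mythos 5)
The paper offers no proof of this proposition: it is stated as a quoted result of Chen and Lee (\cite[Corollary 3.4]{CL}), which in turn rests on the effective Chebotarev density theorem of Kumer and Scherk \cite[Theorem 1]{KS}. So there is no in-paper argument to compare against, and your task was effectively to reconstruct the proof given in the cited reference. Your high-level plan is the right one, and it does match what Chen and Lee do: convert the problem to a counting problem over the geometric cover $E/\bF_E K$, use Riemann--Hurwitz to control the genus of $E$, apply the Weil bound (Riemann Hypothesis for curves) to bound the error term, and track the constant-field shift to explain the $+d$.

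That said, the sketch has two inaccuracies worth flagging. First, the Weil error term in counting degree-$n$ primes of $E$ grows linearly in $g_E$, not like $\sqrt{g_E}$; the correct shape is roughly $g_E\cdot q^{d_0 n/2}$ (coming from $|a_1|\le 2g_E\sqrt{q}$ in the zeta function), and this is what produces the logarithmic dependence on $B$ after solving for $n$. Second, your claim that any $v$ realizing $\sC$ ``automatically lies outside $\Sigma$'' is false: the hypothesis is only $\Sigma\ge\Sigma_{E/K}$, so $\Sigma$ may contain unramified places, and a place realizing $\sC$ could well lie in $\Sigma$. This is precisely why $\deg_K\Sigma$ enters the constant $B$ --- the places of $\Sigma$ must be subtracted from the count before concluding existence, and bounding their number by $\deg_K\Sigma$ is what lets the argument close. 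Finally, what you have written is a plan, not a proof: the substitution of the explicit constants from Kumer--Scherk and the reduction modulo the order of the image of $\sC$ in $\Gal{\bF_E}{\bF_K}$ still need to be carried out to recover the stated bound $\tfrac{4}{d_0}\log_q\tfrac{4}{3}(B+3g_K+3)+d$.
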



Let $\pi$ be a monic irreducible element of $A$ and $m \geq 1$ an integer
which divides $\#\bF_\pi^\times = q_\pi-1$.
A monic irreducible element $\pi_0$ distinct from  $\pi$ is called an {\it $m$-th power residue modulo $\pi$} if $(\pi_0 \ \mbox{mod}\ \pi) \in (\bF_\pi^\times)^m$.
As an application of Proposition \ref{CL}, we show that  one can find an $m$-th power residue modulo $\pi$ whose degree is  smaller than  $\deg(\pi)$ if $\deg(\pi)$ is sufficiently large.
Denote by $F_m$ the unique  subfield of $F(\cC[\pi])$ with $[F_m:F]=m$ and 
consider the character
$
\chi(m):G_F \overset{\chi_\pi}{\lra} \bF_\pi^\times \thra \bF_\pi^\times/(\bF_\pi^\times)^m.
$

\begin{lem}\label{est}
The following are equivalent.
\begin{itemize}
\item  $\pi_0$ is an $m$-th power residue modulo $\pi$.
\item  $\chi(m)(\Frob{\pi_0}{})=1$.
\item  $\Frob{\pi_0}{}{\mid}_{F_m}^{}=\id$.
\end{itemize}
\end{lem}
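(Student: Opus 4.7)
The plan is to deduce all three equivalences from the explicit description of $\chi_\pi$ on Frobenius elements recorded in Example \ref{Carlitz}, together with the Galois-theoretic interpretation of $F_m$ as the fixed field of $\Ker(\chi(m))$.

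First I would recall that the mod $\pi$ Carlitz character induces an isomorphism $\chi_\pi : \Gal{F(\cC[\pi])}{F} \overset{\sim}{\ra} \bF_\pi^\times$. Since $m$ divides $q_\pi-1 = \#\bF_\pi^\times$ and $\bF_\pi^\times$ is cyclic, the unique subgroup of $\bF_\pi^\times$ of index $m$ is $(\bF_\pi^\times)^m$. Under the isomorphism $\chi_\pi$, this subgroup corresponds to a unique subgroup $H \subset \Gal{F(\cC[\pi])}{F}$ of index $m$, whose fixed field must coincide with $F_m$ by the uniqueness clause in the definition of $F_m$. Consequently $\chi(m)$ factors through $\Gal{F_m}{F}$ and identifies it with $\bF_\pi^\times/(\bF_\pi^\times)^m$.

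Next I would handle (2) $\Leftrightarrow$ (3): by the previous paragraph, $\chi(m)(\Frob{\pi_0}{}) = 1$ if and only if $\Frob{\pi_0}{}$ acts trivially on $F_m$, i.e.\ $\Frob{\pi_0}{}|_{F_m} = \id$. Note that $\chi_\pi$ is unramified at $\pi_0 \neq \pi$, so $\Frob{\pi_0}{}$ is well-defined on both $F(\cC[\pi])$ and $F_m$ up to conjugation, and since the relevant Galois group is abelian the value $\chi(m)(\Frob{\pi_0}{})$ does not depend on any choice.

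For (1) $\Leftrightarrow$ (2), I would apply the formula $\chi_\pi(\Frob{\pi_0}{}) \equiv \pi_0^{f_{\pi_0|\pi_0}} = \pi_0 \pmod{\pi}$ from Example \ref{Carlitz}, specialized to the base field $F$ (where $f_{\pi_0|\pi_0}=1$). Passing to the quotient $\bF_\pi^\times \thra \bF_\pi^\times/(\bF_\pi^\times)^m$ gives $\chi(m)(\Frob{\pi_0}{}) = (\pi_0 \bmod \pi)(\bF_\pi^\times)^m$, which is trivial precisely when $\pi_0 \bmod \pi \in (\bF_\pi^\times)^m$, i.e.\ when $\pi_0$ is an $m$-th power residue modulo $\pi$. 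There is no real obstacle here; the lemma is entirely formal once one has the explicit description of the Carlitz character on Frobenius elements and the Galois correspondence for the cyclic extension $F(\cC[\pi])/F$.
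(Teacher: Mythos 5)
Your argument is correct and follows exactly the same route as the paper's (very brief) proof: it combines the formula $\chi_\pi(\Frob{\pi_0}{}) \equiv \pi_0 \pmod{\pi}$ from Example \ref{Carlitz} with the identification of $F_m$ as the fixed field of $\Ker(\chi(m))$. You simply spell out the Galois-theoretic details that the paper leaves implicit.
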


\begin{proof}
It is trivial when $m=1$.
If not, then this lemma immediately follows from  that $\chi_\pi(\Frob{\pi_0}{}) \equiv \pi_0\ (\mbox{mod}\ \pi)$ and $F_m$ is the fixed field of $F^{\rm sep}$ by $\Ker {\chi(m)}$.
\end{proof}

Denote by $\tilde{K}$ the Galois closure of $K_{\rm s}$ over $F$
and set $E:=\tilde{K}F_m$, which is also a Galois extension of $F$.
Consider the divisor  $\Sigma:=\Sigma_{E/F} +\pi + \infty \in \mathrm{Div}(F)$.
For the constant  $B=B(E/F, \Sigma)=\max\{\deg_F\Sigma, \deg_E\cD_{E/\bF_EF}, 2[E:\bF_EF]-2, 1 \}$, we obtain the following estimate.

\begin{lem}\label{B}
Let $n$ be a positive integer.
Then there exists a constant $C_5=C_5(K_{\rm s}, n_K^{}, q,m , n)>0$ such that 
for any $\pi$ satisfying $\deg(\pi) > C_5$, the inequality 
\[
4 \log_q\frac{4}{3}(B + 3) + [\bF_{\tilde{K}}: \bF_q] < \frac{1}{n} \deg(\pi)
\]
holds.
\end{lem}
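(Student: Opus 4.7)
The plan is to show that $B = B(E/F, \Sigma)$ grows at most linearly in $\deg(\pi)$ with coefficients depending only on $K_{\rm s}, n_K, q, m$. Since the target inequality involves $B$ only inside a logarithm on the left while the right-hand side is linear in $\deg(\pi)$, such a bound immediately yields the result for $\deg(\pi)$ sufficiently large.

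First I bound each of the four quantities in the $\max$ defining $B$. For $\deg_F \Sigma$: a place of $F$ ramifies in $E = \tilde K F_m$ iff it ramifies in $\tilde K$ or in $F_m$; the former contributes a constant depending only on $K_{\rm s}$, while $F_m \subset F(\cC[\pi])$ is ramified only at $\pi$ and $\infty$ (Example \ref{Carlitz}), contributing at most $\deg(\pi) + 1$. Together with the extra $\pi + \infty$ in the definition of $\Sigma$, this gives $\deg_F \Sigma \leq 2 \deg(\pi) + C(K_{\rm s})$. For $\deg_E \cD_{E/\bF_E F}$: since constant extensions are unramified, $\cD_{\bF_E F / F} = 0$, so $\cD_{E/\bF_E F} = \cD_{E/F}$, and by Lemma \ref{d},
\[
\cD_{E/F} \leq i_{E/\tilde K}\cD_{\tilde K / F} + i_{E/F_m}\cD_{F_m/F}.
\]
Taking $\deg_E$ via Lemma \ref{deg}, the $\tilde K$-piece is bounded by a constant depending on $K_{\rm s}$ and $m$. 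For the $F_m$-piece, the crucial point is that $\gcd(p, q_\pi - 1) = 1$, so $F(\cC[\pi])/F$ (hence $F_m/F$) is tamely ramified everywhere, giving $\delta_w = e_{w|v} - 1$; since $\pi$ is totally ramified of degree $m$ in $F_m/F$ with residue degree $1$, and $F_m$ is a geometric extension of $F$, its contribution to $\deg_{F_m}\cD_{F_m/F}$ is exactly $(m-1)\deg(\pi)$, with a further bounded contribution from $\infty$. Finally, $2[E:\bF_E F] - 2 \leq 2[\tilde K : F] m$ is a constant in $\pi$.

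Combining, $B \leq \alpha \deg(\pi) + \beta$ for constants $\alpha, \beta$ depending only on $K_{\rm s}, n_K, q, m$, whence $4\log_q\bigl(\tfrac{4}{3}(B+3)\bigr) + [\bF_{\tilde K} : \bF_q]$ is $O(\log \deg(\pi))$, which is smaller than $\deg(\pi)/n$ once $\deg(\pi) > C_5(K_{\rm s}, n_K, q, m, n)$. The main technical observation is the tameness of $F_m/F$ at $\pi$, which is what keeps the ramification divisor growing only linearly (rather than worse) in $\deg(\pi)$; a minor subsidiary point is verifying $\bF_E = \bF_{\tilde K}$, which follows from $F_m/F$ being geometric.
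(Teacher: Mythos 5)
Your proof is correct and follows essentially the same route as the paper's: both bound $B$ by a linear function of $\deg(\pi)$ by splitting the ramification divisor of $E/F$ via Lemma \ref{d}, using that $F_m/F$ is tamely ramified (only at $\pi$ and $\infty$) so that the $F_m$-contribution to $\deg_E\cD_{E/F}$ grows linearly, and then conclude because the left-hand side grows only logarithmically in $\deg(\pi)$. One small caveat on your side remark: $\bF_E=\bF_{\tilde K}$ does \emph{not} follow merely from $F_m/F$ being geometric (compositing two geometric extensions can enlarge the constant field); what actually forces it is that $\pi$ is unramified in $\tilde K$ and totally ramified in $F_m$, hence totally ramified in $E/\tilde K$, so inertia fills $\Gal{E}{\tilde K}$ and the unramified constant subextension must be trivial — but this point is not needed for the lemma as stated, which only uses that $[\bF_{\tilde K}:\bF_q]\leq n_K!$ is a constant, exactly as in the paper.
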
 
\begin{proof}
We first compute an upper  bound of $B$.
We may assume that $\pi$ is unramified in $K_{\rm s}$. 
Since the degree $[\tilde{K}:F]$ is less than or equal to $n_{K}^{}!$, we see that 
$[\bF_{\tilde{K}}:\bF_q] \leq n_K^{}!$ and $[E:\bF_EF] \leq m\cdot n_K^{}!$.
By Example \ref{Carlitz}, the infinite place $\infty$ of $F$ is split into at most $m$ places in $F_m$
whose ramification indices divide $q-1$ and $\pi$ is totally ramified or unramified  (if $m=1$) in $F_m$.
Thus we see that 
\[
\deg_F\Sigma \leq \deg_F(\Sigma_{F_m/F} + \Sigma_{\tilde{K}/F} + \pi + \infty)\leq 2\deg(\pi)+2+\deg_F\Sigma_{\tilde{K}/F}.
\]
Now $\cD_{E/\bF_EF} = \cD_{E/F}$ holds.
Lemmas \ref{deg} and \ref{d} imply
\begin{eqnarray}
\deg_E\cD_{E/F} &{\leq}& \deg_{E}i_{E/\tilde{K}}\cD_{\tilde{K}/F} + \deg_{E}i_{E/F_m}\cD_{F_m/F} \nonumber \\
&\leq& m\cdot \deg_{\tilde{K}}\cD_{\tilde{K}/F} + [E:F_m]_{\rm g}
\cdot \deg_{F_m}(\sum_{v |\infty}(q-2)v + m\pi) \nonumber \\
&\leq& m\cdot \deg_{\tilde{K}}\cD_{\tilde{K}/F} +  n_K! \cdot m(q-2 + \deg(\pi)). \nonumber
\end{eqnarray}
Hence there exist positive constants $B_1$ and $B_2$ depending only on $K_{\rm s}, n_K$, $q$ and $m$ such that 
$
B \leq B_1 \cdot \deg(\pi) +B_2
$ holds.
Therefore if $\deg(\pi)$ is sufficiently large, then $4 \log_q\frac{4}{3}(B + 3) + [\bF_{\tilde{K}}: \bF_q] < \frac{1}{n} \deg(\pi)$ holds.
\end{proof} 

 Proposition \ref{CL} and Lemma \ref{B} imply the following:

\begin{prop}\label{mth}
Let $n$ be  a positive integer. 
If $\deg(\pi) > C_5$, then there exist a monic irreducible element $\pi_0 \in A$
and a place $v$ of $K$ above $\pi_0$ such that 
\begin{itemize}
\item $\pi_0$ is an $m$-th power residue modulo $\pi$,
\item $\deg(\pi_0) < \frac{1}{n}\deg(\pi)$,  
\item $f_{v|\pi_0}=1$.
\end{itemize}
\end{prop}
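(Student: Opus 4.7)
The plan is to apply the effective Chebotarev density theorem (Proposition \ref{CL}) to the Galois extension $E = \tilde{K}F_m$ over $F$, taking as conjugacy class the singleton $\sC = \{1\} \subset \Gal{E}{F}$ and as ramification divisor $\Sigma = \Sigma_{E/F} + \pi + \infty$ — exactly the setup already considered in Lemma \ref{B}. The class $\sC$ is obviously nonempty. Since $F_m \subseteq F(\cC[\pi])$ is a geometric extension of $F$ by Example \ref{Carlitz}, the constant field of $E$ coincides with that of $\tilde{K}$, so in the notation of Proposition \ref{CL} applied to $E/F$ we have $d_0 = 1$, $g_F = 0$, and $d = [\bF_E:\bF_q] = [\bF_{\tilde{K}}:\bF_q]$.

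With these parameters, Proposition \ref{CL} produces a monic irreducible element $\pi_0 \in A$ with $\pi_0 \notin \Sigma$ (in particular $\pi_0 \neq \pi$), with $\FC{E}{F}{\pi_0} = \{1\}$, and with
\[
\deg(\pi_0) \leq 4 \log_q \tfrac{4}{3}(B+3) + [\bF_{\tilde{K}}:\bF_q].
\]
As soon as $\deg(\pi) > C_5$, Lemma \ref{B} forces the right-hand side to be strictly smaller than $\frac{1}{n}\deg(\pi)$, which is the desired second conclusion.

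It then remains to read off the two other properties from $\FC{E}{F}{\pi_0} = \{1\}$. First, since $F_m \subseteq E$, the restriction $\Frob{\pi_0}{}|_{F_m} = \id$, and Lemma \ref{est} immediately gives that $\pi_0$ is an $m$-th power residue modulo $\pi$. Second, since $\tilde{K} \subseteq E$, the restriction $\Frob{\pi_0}{}|_{\tilde{K}} = \id$, so $\pi_0$ splits completely in $\tilde{K}$, hence in the intermediate field $K_{\rm s}$; so every place $v_0$ of $K_{\rm s}$ above $\pi_0$ satisfies $f_{v_0|\pi_0} = 1$. Because $K/K_{\rm s}$ is purely inseparable, every place of $K_{\rm s}$ has a unique extension to $K$ with the same residue field, so every place $v$ of $K$ above $\pi_0$ satisfies $f_{v|\pi_0} = 1$, and any such $v$ works.

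The only mildly delicate step is the final transfer of complete splitting from $\tilde{K}$ down to $K$ through the purely inseparable step $K/K_{\rm s}$, together with the observation $\bF_E = \bF_{\tilde{K}}$ that makes the constant-field term in Proposition \ref{CL} match the one appearing in Lemma \ref{B}. The real analytic content — bounding the ramification divisor and the discriminant of $E/F$ — has already been absorbed into Lemma \ref{B}, so no further serious obstacle is expected.
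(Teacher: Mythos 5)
Your proof is correct and follows essentially the same route as the paper: apply Proposition \ref{CL} to $E=\tilde{K}F_m$ over $F$ with the divisor $\Sigma$ of Lemma \ref{B}, then read off the three conclusions from the resulting Frobenius condition and degree bound. The only cosmetic difference is that you take $\sC=\{1\}$, whereas the paper takes the conjugacy class of an arbitrary $\sigma\in\Gal{E}{F}$ with $\sigma|_{K_{\rm s}F_m}=\id$ and passes to the fixed field of the decomposition group; your choice forces complete splitting in all of $\tilde{K}$ rather than only in $K_{\rm s}F_m$, which is a harmless strengthening since the effective bound is computed over $E$ in either case. One small caution on a side remark: the equality $\bF_E=\bF_{\tilde{K}}$ does \emph{not} follow merely from $F_m/F$ being geometric (the compositum of two geometric extensions need not be geometric — e.g.\ $\bF_3(\sqrt{t})\cdot\bF_3(\sqrt{-t})$ acquires $\sqrt{-1}$); the correct reason is that $F_m/F$ is totally ramified at $\pi$ while $\tilde{K}/F$ is unramified there, so any nontrivial constant (hence everywhere unramified) subextension of $E/\tilde{K}$ would contradict the ramification of $\pi$ in $F_m$. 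This point is also left implicit in the paper's Lemma \ref{B}, so it does not distinguish your argument from the paper's, but the stated justification should be tightened.
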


\begin{proof}

We may assume that $K=K_{\rm s}$ since the extension $K/K_{\rm s}$ is totally ramified at any place  if $K \neq K_{\rm s}$.
Let $\tilde{K}$ and $E=\tilde{K}F_m$ be as above and fix an element $\sigma \in \Gal{E}{F}$ such that $\sigma|_{KF_m}=\id$.
For the conjugacy class $\sC$ of $\sigma$ in $\Gal{E}{F}$, by Proposition \ref{CL} and Lemma \ref{B},
there exists a  place $\pi_0$ of $F$ with $\pi_0 \notin \Sigma$ (hence it is a finite place) such that 
$\FC{E}{F}{\pi_0}=\sC$ and $\deg(\pi_0) < \frac{1}{n}\deg(\pi)$, so that $\sigma=\mathrm{Fr}_{w|\pi_0}$
for some place $w$ of $E$.
Then the decomposition group $Z_w$ of $w$ over $\pi_0$ is generated by $\sigma$ and  
it is a  subgroup of $\Gal{E}{KF_m}$.
Denote by $K'$ the fixed field of $E$ by $Z_w$.
Then the  place $v'$ of $K'$ below $w$ satisfies
$e_{v'|\pi_0}=1$ and $f_{v'|\pi_0}=1$.
Hence  $f_{v|\pi_0}=1$, where $v$ is the place  of $K$ below $v'$. 
By construction, we see that   $\Frob{v}{}|_{F_m}=\id$.
Lemma \ref{est} means that $\pi_0$ is an
$m$-th power residue modulo $\pi$.
\end{proof}

\subsection{Non-$p$-power rank case}
Let $\phi$ be a rank-$r$ Drinfeld module over $K$ satisfying $[\phi] \in \sD(K,r,\pi)$.
In this subsection, we always assume that $r=r_0 \cdot p^\nu$ for some  $r_0>1$ which   is prime to $p$. 
Now let $i_1, \ldots, i_r$ be the positive integers satisfying  $\bar{\rho}^{\rm ss}_{\phi,\pi} \simeq \chi_\pi^{i_1} \oplus \cdots \oplus \chi_\pi^{i_r}$ by (D2). 
For any  ${\bf s}=(s_1, \ldots, s_r) \in \bS_r$, set $\varepsilon_{\bf s}:=\chi_\pi{}^{i_{s_1}+\cdots +i_{s_r}-1}$ and define 
\[
\epsilon:=(\varepsilon_{\bf s})_{{\bf s} \in \bS_r} : G_F \ra (\bF_\pi^\times)^{\oplus r^r}.
\]
Set $m_\phi:=\#\epsilon(G_F)$, which is the least common multiple of the orders of  $\varepsilon_{\bf s}$.
Since $\epsilon$ factors through $\bF_\pi^\times$, the image $\epsilon(G_F)$ is cyclic and $m_\phi|(q_\pi-1)$.
Then we obtain the following commutative diagram

\begin{equation}
\begin{xy}
(30,20) *{\epsilon(G_F)}="G",
(0,0) *{G_F }="H", (2,-3)*{ }="S", (30,0) *{\bF_\pi^\times }="I", (60,0)*{ }="Q", (60,3)*{ }="P",(60,-3)*{ }="R",(70,0) *{\bF_\pi^\times /(\bF_\pi^\times)^{m_\phi}}="J".

\ar "H";"G"^-{\epsilon}
\ar@{>>} "I";"G"^-{}
\ar "H";"I"^-{\chi_\pi}
\ar@{>>}"I";"Q"^-{}
\ar "P";"G"_{\simeq}
\ar @/_2mm/@{.>}"S";"R"_{\chi \left(m_\phi \right)}
\end{xy} .\nonumber
\end{equation}

\noindent
Hence a monic irreducible element $\pi_0$ is an $m_\phi$-th power residue modulo $\pi$ if and only if $\varepsilon_{\bf s}(\Frob{\pi_0}{})=1$ for any ${\bf s} \in \bS_r$.

\begin{lem}
If $\deg(\pi)>C_2(n_K,q,r)$, then $m_\phi$ divides the greatest common divisor $(e_\phi, q_\pi-1)$.
In particular, it divides $n_K C_1(q,r)$.
\end{lem}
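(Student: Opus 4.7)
The plan is to extract the divisibility from Lemma \ref{e}(2) and combine it with the elementary fact that $\chi_\pi$ takes values in $\bF_\pi^\times$, which has order $q_\pi-1$.

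First I would fix an arbitrary ${\bf s}=(s_1,\ldots,s_r) \in \bS_r$ and consider the character $\varepsilon_{\bf s} = \chi_\pi^{i_{s_1}+\cdots+i_{s_r}-1}$. Since $\deg(\pi) > C_2$, Lemma \ref{e}(2) applies and gives
\[
e_\phi \cdot (i_{s_1} + \cdots + i_{s_r} - 1) \equiv 0 \pmod{q_\pi - 1}.
\]
Because $\chi_\pi$ factors through $\bF_\pi^\times$, whose order is $q_\pi - 1$, this congruence is equivalent to $\varepsilon_{\bf s}^{e_\phi} = 1$. Hence the order of every $\varepsilon_{\bf s}$ divides $e_\phi$, and consequently their least common multiple $m_\phi = \#\epsilon(G_F)$ also divides $e_\phi$.

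Next I would recall that each $\varepsilon_{\bf s}$ is a power of $\chi_\pi$ and therefore its order divides $q_\pi - 1$; taking the least common multiple over ${\bf s}$ gives $m_\phi \mid q_\pi - 1$. Combining the two divisibilities yields $m_\phi \mid \gcd(e_\phi, q_\pi - 1)$.

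For the final assertion, I would simply invoke Lemma \ref{e}(1), which states that $e_\phi$ divides $n_K \cdot C_1(q,r)$; since $m_\phi \mid e_\phi$, it follows that $m_\phi \mid n_K \cdot C_1(q,r)$. There is no real obstacle here: the entire statement is an essentially formal consequence of Lemma \ref{e}, together with the observation that the values of $\chi_\pi$ lie in a cyclic group of order $q_\pi - 1$.
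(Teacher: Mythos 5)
Your proof is correct and follows the same route as the paper, whose own proof is just the single line ``It follows from Lemma~\ref{e}~(2).'' You have simply unpacked that citation: Lemma~\ref{e}~(2) gives $\varepsilon_{\mathbf{s}}^{e_\phi}=1$ for every $\mathbf{s}$, hence $m_\phi\mid e_\phi$, while $m_\phi\mid q_\pi-1$ is automatic because the $\varepsilon_{\mathbf{s}}$ factor through $\bF_\pi^\times$, and the ``in particular'' clause is Lemma~\ref{e}~(1). (One tiny imprecision: you say the congruence is ``equivalent to'' $\varepsilon_{\mathbf{s}}^{e_\phi}=1$ because $\chi_\pi$ factors through $\bF_\pi^\times$; factoring through only gives the implication you actually use, and the converse would need surjectivity of $\chi_\pi$ — but since you only need the one direction, this does not affect correctness.)
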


\begin{proof}
It follows from Lemma \ref{e} (2). 
\end{proof}

\begin{prop}\label{mv}
If there exist a monic irreducible element $\pi_0$ and a finite place $v$ of $K$ above $\pi_0$ such that $\deg(\pi) > f_{v|\pi_0} \deg(\pi_0)$ and $r_0$ does not divide $f_{v|\pi_0}$,
then $m_{\phi}>1$ and $\chi(m_\phi)(\Frob{v}{}) \neq 1$.
\end{prop}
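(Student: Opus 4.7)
The plan is to argue by contradiction: assume $\chi(m_\phi)(\Frob{v}{})=1$. By the commutative diagram preceding the statement, this is equivalent to $\varepsilon_{\bf s}(\Frob{v}{})=1$ for every ${\bf s}\in\bS_r$. Specialising to the constant tuples ${\bf s}=(s,\ldots,s)$ yields $\chi_\pi(\Frob{v}{})^{ri_s-1}=1$ for each $s$, and comparing two such relations that differ in a single coordinate gives $\chi_\pi(\Frob{v}{})^{i_s-i_t}=1$ for all $s,t$. Hence all diagonal entries $\beta_s:=\chi_\pi(\Frob{v}{})^{i_s}$ of $\mrep{\phi}{\pi}(\Frob{v}{})$ collapse to a single value $\gamma\in\bF_\pi^\times$ satisfying $\gamma^r=\chi_\pi(\Frob{v}{})\equiv\pi_0^{f_{v|\pi_0}}\pmod\pi$ by Example \ref{Carlitz}. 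Since $\phi$ has good reduction at $v$ by condition (D1) (as $\pi_0\neq\pi$), Proposition \ref{good} produces the characteristic polynomial $P_v(T)=\sum_{k=0}^r(-1)^k a_k T^{r-k}\in A[T]$ with $|a_k|\leq q_v^{k/r}$; its reduction modulo $\pi$ coincides with the characteristic polynomial of the upper-triangular matrix $\mrep{\phi}{\pi}(\Frob{v}{})$, namely $(T-\gamma)^r$. Therefore $a_k\equiv\binom{r}{k}\gamma^k\pmod\pi$ for each $k$.

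The crucial step, which I anticipate as the main obstacle, is to isolate a coefficient that yields an $r_0$-th root extraction. By Lucas's theorem applied to $r=r_0 p^\nu$, one has $\binom{r}{p^\nu}\equiv r_0\pmod p$, which is nonzero since $p\nmid r_0$. Setting $f:=f_{v|\pi_0}$ and $a:=a_{p^\nu}$, we obtain $a\equiv r_0\gamma^{p^\nu}\pmod\pi$, whence
\[
a^{r_0}\equiv r_0^{r_0}\gamma^r\equiv r_0^{r_0}\pi_0^f\pmod\pi.
\]
The estimate $|a^{r_0}|\leq q_v^{r_0 p^\nu/r}=q_v=q^{f\deg(\pi_0)}$ together with $|r_0^{r_0}\pi_0^f|=q^{f\deg(\pi_0)}$ and the hypothesis $\deg(\pi)>f\deg(\pi_0)$ imply, via the non-archimedean triangle inequality, that $|a^{r_0}-r_0^{r_0}\pi_0^f|<|\pi|$. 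Combined with $a^{r_0}-r_0^{r_0}\pi_0^f\in\pi A$, this forces the identity $a^{r_0}=r_0^{r_0}\pi_0^f$ in $A$.

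Finally, rewriting this as $(a/r_0)^{r_0}=\pi_0^f$ in $F$ and taking the $\pi_0$-adic valuation yields $r_0\mid f$, contradicting the hypothesis $r_0\nmid f_{v|\pi_0}$. Therefore $\chi(m_\phi)(\Frob{v}{})\neq 1$, and $m_\phi>1$ follows automatically, since $m_\phi=1$ would render $\chi(m_\phi)$ trivial. The choice $k=p^\nu$ is essentially forced by competing constraints: Lucas's theorem requires $p\nmid\binom{r}{k}$, whose smallest positive solution is $k=p^\nu$, while the absolute value comparison requires $k\leq r/r_0=p^\nu$, so $p^\nu$ is the unique index for which both the coefficient is a $p$-adic unit \emph{and} the bound $|a_k^{r_0}|\leq q_v$ fits strictly below $|\pi|$.
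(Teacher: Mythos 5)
Your proof is correct and follows essentially the same route as the paper: both focus on the coefficient of $T^{r-p^\nu}$ in $P_v(T)$, raise it to the $r_0$-th power, combine the mod-$\pi$ congruence with an absolute-value estimate to upgrade it to an equality in $A$, and then extract $r_0 \mid f_{v|\pi_0}$ from the $\pi_0$-adic valuation. The only noteworthy difference is a presentational one: you first show, by playing constant tuples against tuples differing in a single slot, that all diagonal eigenvalues of $\mrep{\phi}{\pi}(\Frob{v}{})$ collapse to a single $\gamma$ with $\gamma^r=\chi_\pi(\Frob{v}{})$, so that $P_v(T)\equiv(T-\gamma)^r\pmod\pi$ and $a_{p^\nu}\equiv\binom{r}{p^\nu}\gamma^{p^\nu}$ drops out immediately. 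The paper instead computes $(a_{v,p^\nu})^{r_0}$ directly by expanding $S_{p^\nu}^{r_0}$ as a sum indexed by $\bS_{p^\nu}^{r_0}$ and appealing to $\varepsilon_{\bf s}(\Frob{v}{})=1$ term-by-term. Your version isolates the ``all eigenvalues equal'' step, which makes the combinatorics more transparent, and you make the appeal to Lucas's theorem explicit where the paper merely asserts $p\nmid\binom{r}{p^\nu}$; the two arguments are otherwise the same computation up to the sign $(-1)^{p^\nu}$ coming from the paper's convention for $a_{v,p^\nu}$.
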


\begin{proof}
Assume that either $m_\phi=1$ or $\chi(m_\phi)(\Frob{v}{})=1$ holds.
Then  $\varepsilon_{\bf s}(\Frob{v}{})=1$ for any ${\bf s} \in \bS_r$.
Denote by $a_{v,p^\nu} \in A$ the coefficient of $T^{r-p^\nu}$ in the characteristic polynomial $P_v(T)$ of $\Frob{v}{}$ on $T_\pi(\phi)$.
It is given by $a_{v,p^\nu}=(-1)^{p^\nu}S_{p^\nu}(\alpha_1, \ldots, \alpha_r)$, where $\alpha_1, \ldots, \alpha_r$ are the roots of $P_v(T)$ and $S_{p^\nu}(x_1, \ldots,x_r)$ is the fundamental symmetric polynomial of degree $p^\nu$ with $r$ variables.
Consider the subset  $\bS_{r,p^\nu}:=\{ (s_1, \ldots, s_{p^\nu}) ; 1 {\leq} s_1 {<}\cdots {<}s_{p^\nu} {\leq} r\}$ of $\bZ^{p^\nu}_{}$. 
Then the product $\bS_{p^\nu}^{r_0}$ can be  regarded as a subset of $\bS_r$.
Since $S_{p^\nu}(x_1, \ldots,x_r)$ is the sum of $\binom{r}{p^\nu}$ monomials of degree $p^\nu$, we obtain that 
\begin{eqnarray}
(a_{v,p^\nu})^{r_0} {=} (-1)^r S_{p^\nu}(\alpha_1, \ldots, \alpha_r)^{r_0} &\equiv&
 (-1)^r \left(\sum_{(s_1, \ldots, s_{p^\nu}) \in \bS_{r,p^\nu}} \chi_\pi^{i_{s_1}+\cdots + i_{s_{p^\nu} }}(\Frob{v}{}) \right)^{r_0}   \nonumber \\
&\equiv&  (-1)^r\sum_{{\bf s} \in \bS_{p^\nu}^{r_0}} \varepsilon_{\bf s}(\Frob{v}{})\chi_{\pi}(\Frob{v}{}) \nonumber \\
&\equiv&  (-1)^r\sum_{{\bf s} \in \bS_{p^\nu}^{r_0}} \chi_\pi(\Frob{v}{}) \nonumber \\
&\equiv&  (-1)^r\binom{r}{p^\nu}^{r_0} \pi_0^{f_{v|\pi_0}}  \pmod \pi \nonumber 
\end{eqnarray}
and $ (-1)^r\binom{r}{p^\nu}^{r_0} \pi_0^{f_{v|\pi_0}} \not\equiv 0 \pmod \pi$ since $\binom{r}{p^\nu}$ is not divisible by  $p$.
Now we see that 
\[
|(a_{v,p^\nu})^{r_0}| \leq  q_v =q^{f_{v|\pi_0}\deg(\pi_0)} < |\pi|\ 
\mbox{and}\ 
\left| (-1)^r\binom{r}{p^\nu}^{r_0} \pi_0^{f_{v|\pi_0}} \right|=|\pi_0^{f_{v|\pi_0}}| =q_v < |\pi|.
\]
Hence the above congruence implies $(a_{v,p^\nu})^{r_0}=(-1)^r\binom{r}{p^\nu}^{r_0} \pi_0^{f_{v|\pi_0}}$.
Comparing  the $\pi_0$-adic valuations of both sides, we obtain $r_0|f_{v|\pi_0}$, which is a contradiction. 
\end{proof}
Set 
\begin{eqnarray}
C_6 &=& C_6(K_{\rm s}, n_K, q,r):=\max\{ C_5(K_{\rm s}, n_K, q,m,1) ;\  m|n_KC_1(q,r)\} \nonumber \\
C_7 &=& C_7(K_{\rm s}, n_K,q, r):=\max\{ C_2(n_K, q,r), C_6(K_{\rm s}, n_K,q,r)\}. \nonumber
\end{eqnarray}
Then we have the following theorem.

\begin{thm}\label{main2}
Suppose that $r=r_0p^\nu$  as above and $\deg(\pi)>C_7$.
 Then the set  $\sD(K,r,\pi)$ is empty.  
\end{thm}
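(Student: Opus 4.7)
The plan is to derive a contradiction from the assumption that some $[\phi] \in \sD(K,r,\pi)$ exists when $\deg(\pi) > C_7$. The two ingredients already in hand, Propositions \ref{mth} and \ref{mv}, are clearly designed to be combined: one produces an auxiliary prime $\pi_0$ on which $\chi(m_\phi)$ vanishes, while the other forbids such vanishing. The entire argument amounts to matching up their hypotheses.

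First I would control the size of $m_\phi$. Since $\deg(\pi) > C_7 \geq C_2(n_K,q,r)$, Proposition \ref{cong} together with Lemma \ref{e}(2) forces $m_\phi \mid (e_\phi, q_\pi-1)$ and hence $m_\phi \mid n_K C_1(q,r)$. This is essential because the threshold constant $C_5 = C_5(K_{\rm s},n_K,q,m,n)$ in Proposition \ref{mth} depends on $m$; by taking the maximum over all divisors $m$ of $n_K C_1(q,r)$ (this is exactly the definition of $C_6$), we obtain $C_5(K_{\rm s},n_K,q,m_\phi,1) \leq C_6 \leq C_7$, independently of $\phi$. Thus the estimate $\deg(\pi) > C_7$ allows us to apply Proposition \ref{mth} with $m = m_\phi$ and $n = 1$, yielding a monic irreducible $\pi_0 \in A$ and a place $v$ of $K$ above $\pi_0$ such that $\pi_0$ is an $m_\phi$-th power residue modulo $\pi$, $\deg(\pi_0) < \deg(\pi)$, and $f_{v|\pi_0} = 1$.

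Next I would feed this $\pi_0$ and $v$ into Proposition \ref{mv}. Since $r_0 > 1$ and $f_{v|\pi_0} = 1$, the divisibility $r_0 \mid f_{v|\pi_0}$ fails; and $\deg(\pi) > \deg(\pi_0) = f_{v|\pi_0}\deg(\pi_0)$, so both hypotheses are satisfied. Proposition \ref{mv} then yields
\[
\chi(m_\phi)(\Frob{v}{}) \neq 1.
\]
On the other hand, since $\chi(m_\phi)$ factors through the $\chi_\pi$-action and is therefore unramified outside $\pi$ and $\infty$, one has $\chi(m_\phi)(\Frob{v}{}) = \chi(m_\phi)(\Frob{\pi_0}{})^{f_{v|\pi_0}} = \chi(m_\phi)(\Frob{\pi_0}{})$, and by Lemma \ref{est} the $m_\phi$-th power residue condition on $\pi_0$ gives $\chi(m_\phi)(\Frob{\pi_0}{}) = 1$. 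This contradicts the conclusion of Proposition \ref{mv}, proving $\sD(K,r,\pi) = \varnothing$.

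The main obstacle, or rather the only delicate point, is the uniformity of the constant: because $m_\phi$ depends on $\phi$, one needs the a priori bound $m_\phi \mid n_K C_1(q,r)$ (independent of $\phi$) in order to absorb the $m$-dependence of $C_5$ into a single constant $C_7$ that works for every $[\phi] \in \sD(K,r,\pi)$ simultaneously. Everything else is a direct assembly of Propositions \ref{mth} and \ref{mv} together with the unramifiedness of $\chi_\pi$ at $\pi_0 \neq \pi, \infty$.
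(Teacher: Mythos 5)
Your proof is correct and follows essentially the same route as the paper: assume $[\phi]\in\sD(K,r,\pi)$, bound $m_\phi$ by $n_KC_1(q,r)$ so that $C_6$ covers all possible $m_\phi$, invoke Proposition \ref{mth} to produce $\pi_0$ and $v$ with $f_{v|\pi_0}=1$, $\deg(\pi_0)<\deg(\pi)$, and $\chi(m_\phi)(\mathrm{Frob}_{\pi_0})=1$, then contradict Proposition \ref{mv}. You spell out the uniformity-of-constants point (why $C_6$ and $C_7$ are defined as they are) more explicitly than the paper's terse proof, but the logic is identical.
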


\begin{proof}
Assume that $\sD(K,r,\pi)$ is not empty and $[\phi] \in \sD(K,r,\pi)$.
By Proposition \ref{mth}, there exist a monic irreducible element $\pi_0$ and a place $v$ of $K$ above $\pi_0$
such that $f_{v|\pi_0}=1$, $\deg(\pi_0) < \deg(\pi)$ and $\chi(m_\phi)(\Frob{\pi_0}{})=1$.
However, since  $\pi_0$ and $v$ satisfy the assumption of Proposition \ref{mv}, we see that 
$\chi(m_\phi)(\Frob{v}{})=\chi(m_\phi)(\Frob{\pi_0}{}) \neq 1$.
\end{proof}

By the same argument, we can also prove a uniform version.
For a fixed finite separable extension $K_0$ of $F$ with degree $n_0:=[K_0:F]$ and  a positive integer $n$, set
\[
C_8=C_8(K_0,q, r, n):=\max \left\{ C_2(n n_0, q,r), \max\{C_5(K_0,n_0,q, m, n);\ m | n_0C_1(q,r) \} \right\}.
\]

\begin{thm}\label{uniform}
Let $r=r_0p^\nu$, $K_0$, and $n$ be as above.
Suppose that $r_0$ does not divide $n$.
If $\deg(\pi)>C_8$, 
then  for any finite extension $K$ of $K_0$ satisfying  $[K:K_0]=n$, the set 
$
\sD(K,r,\pi)
$ is empty, namely the union
\[
\underset{[K:K_0]=n}{\bigcup}\sD(K,r,\pi)
\]
is empty.
\end{thm}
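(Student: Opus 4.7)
The plan is to mirror the proof of Theorem \ref{main2} while replacing every $K$-dependent quantity by its $K_0$-counterpart, so that the resulting estimates are uniform over the finite extensions $K/K_0$ of degree $n$. Suppose, for contradiction, that $[\phi] \in \sD(K,r,\pi)$ for some such $K$, and let $m_\phi$ be the invariant attached to $\phi$ in Section 4.2. Since $\deg(\pi) > C_2(nn_0,q,r)$ is forced by $C_8$, the lemma preceding Proposition \ref{mv} confines $m_\phi$ to a finite set of divisors determined by $n$, $n_0$, $q$ and $r$ alone; the outer maximum in the definition of $C_8$ is designed to dominate the Chebotarev bound produced below for every such $m$.

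The crucial modification is to Proposition \ref{mth}. In place of the Galois closure $\tilde{K}$ of $K_{\rm s}/F$, I would take the Galois closure $\tilde{K}_0$ of $K_0/F$ and form $E_0 := \tilde{K}_0 F_{m_\phi}$, a field depending only on $K_0$ and $m_\phi$. Choosing $\sigma \in \Gal{E_0}{F}$ with $\sigma|_{K_0 F_{m_\phi}} = \id$ and applying Proposition \ref{CL} to the conjugacy class of $\sigma$, together with the $K_0$-version of Lemma \ref{B} (whose proof goes through verbatim, since it rests only on bounds for $[\tilde{K}_0:F]$, $\deg_F \Sigma_{\tilde{K}_0/F}$, $\deg_{\tilde{K}_0} \cD_{\tilde{K}_0/F}$, $g_F$ and $m_\phi$), one obtains a monic irreducible $\pi_0 \in A$ together with a place $v_0$ of $K_0$ above $\pi_0$ such that $\pi_0$ is an $m_\phi$-th power residue modulo $\pi$, $\deg(\pi_0) < \deg(\pi)/n$, and $f_{v_0|\pi_0} = 1$. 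The effective bound here is captured by $C_5(K_0, n_0, q, m_\phi, n)$, which is absorbed by $C_8$.

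Now let $v$ be any place of $K$ above $v_0$. Then $f_{v|\pi_0} = f_{v|v_0}\cdot f_{v_0|\pi_0} = f_{v|v_0}$ divides $[K_v:K_{0,v_0}] \leq [K:K_0] = n$. Since $r_0 \nmid n$, no divisor of $n$ can be a multiple of $r_0$, so $r_0 \nmid f_{v|\pi_0}$; moreover $f_{v|\pi_0}\deg(\pi_0) \leq n\deg(\pi_0) < \deg(\pi)$. Thus the hypotheses of Proposition \ref{mv} are satisfied, yielding $\chi(m_\phi)(\Frob{v}{}) \neq 1$. On the other hand $\chi(m_\phi)$ factors through $\Gal{F_{m_\phi}}{F}$, which is unramified at $\pi_0$, so $\chi(m_\phi)(\Frob{v}{}) = \chi(m_\phi)(\Frob{\pi_0}{})^{f_{v|\pi_0}}$, and Lemma \ref{est} applied to the Chebotarev choice of $\pi_0$ forces $\chi(m_\phi)(\Frob{\pi_0}{}) = 1$. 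This contradiction completes the argument.

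The step that genuinely uses the hypothesis $r_0 \nmid n$ is the appeal to Proposition \ref{mv}: in the non-uniform Theorem \ref{main2} one had the freedom to choose $v$ inside $K$ directly so that $f_{v|\pi_0} = 1$, whereas here the Chebotarev step is performed over $K_0$ and only controls $f_{v_0|\pi_0}$, so $f_{v|\pi_0}$ may be any divisor of $n$. The hypothesis $r_0 \nmid n$ is precisely what prevents such a divisor from being a multiple of $r_0$; this is the only substantive obstacle, the remaining work being the routine bookkeeping that the various $K_0$-uniform constants produced in the modified Lemma \ref{B} are bounded by $C_8$ independently of $K$.
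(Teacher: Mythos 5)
Your overall strategy is exactly the paper's: run Proposition \ref{mth} over $K_0$ (with the Galois closure $\tilde{K}_0$ of $K_0$ in place of $\tilde{K}$) to obtain $\pi_0$ and a place $v_0$ of $K_0$ with $f_{v_0|\pi_0}=1$, then pass to a suitable place $v$ of $K$ above $v_0$ and invoke Proposition \ref{mv}. The Chebotarev/constant bookkeeping is also handled correctly. However, there is a genuine gap in the step where you pass from $v_0$ to $v$.

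You write ``let $v$ be \emph{any} place of $K$ above $v_0$'' and argue that $f_{v|v_0}$ divides $[K_v:K_{0,v_0}]\leq n$, then conclude from $r_0\nmid n$ that $r_0\nmid f_{v|v_0}$. This does not follow: $f_{v|v_0}$ divides $e_{v|v_0}f_{v|v_0}=[K_v:K_{0,v_0}]$, which is merely $\leq n$ and need not divide $n$. In particular $f_{v|v_0}$ is not a divisor of $n$ in general (your closing remark ``$f_{v|\pi_0}$ may be any divisor of $n$'' is also false for the same reason). For a concrete obstruction, take $n=6$ and $r_0=4$: it can happen that one place $v$ above $v_0$ has $e_{v|v_0}=1$, $f_{v|v_0}=4$ while another accounts for the remaining degree $2$. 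Then $r_0\nmid n$ but $r_0\mid f_{v|v_0}$ for that $v$, and your argument breaks down. The fix is the one the paper actually uses: one cannot take an arbitrary $v$, but there must \emph{exist} a place $v$ above $v_0$ with $r_0\nmid f_{v|v_0}$, because otherwise $r_0$ would divide every term in the fundamental identity $n=\sum_{v\mid v_0}e_{v|v_0}f_{v|v_0}$ and hence $n$ itself, contradicting $r_0\nmid n$. Once you select such a $v$, the rest of your argument (the inequality $f_{v|\pi_0}\deg(\pi_0)<\deg(\pi)$, the appeal to Proposition \ref{mv}, and the evaluation $\chi(m_\phi)(\Frob{v}{})=\chi(m_\phi)(\Frob{\pi_0}{})^{f_{v|\pi_0}}=1$) goes through and yields the contradiction.
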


\begin{proof}
Let $K$ be  a finite extension of $K_0$ with $[K:K_0]=n$ and assume that  $[\phi] \in \sD(K,r,\pi)$.
Applying Proposition \ref{mth} to $K_0$, we can find a monic irreducible element $\pi_0$ and a finite place $v_0$ of $K_0$ above $\pi_0$ such that  $f_{v_0|\pi_0}=1$, $n\deg(\pi_0) < \deg(\pi)$ and $\chi(m_\phi)(\Frob{\pi_0}{})=1$.
Now we can take  a place $v$ of $K$ above $v_0$ such that $f_{v|v_0} (=f_{v|\pi_0})$ is not divisible by $r_0$.
Indeed, if not, then $r_0$ must divide $n=\sum_{v|v_0}e_{v|v_0}f_{v|v_0}$. 
Since $f_{v|\pi_0}\deg(\pi_0) < n\deg(\pi_0)< \deg(\pi)$, by Proposition \ref{mv}, we see that
 $\chi(m_\phi)(\Frob{v}{})=\chi(m_\phi)(\Frob{\pi_0}{})^{f_{v|\pi_0}} \neq 1$.
It is a  contradiction.
\end{proof}

\section{Comparison with number field case}
In this final section, 
we compare the Rasmussen-Tamagawa conjecture and its Drinfeld module analogue.
After studying the similarly of them, we construct an example of a  Drinfeld module satisfying Rasmussen-Tamagawa type conditions for any $\pi$ and prove Theorem \ref{nonempty}.
We also prove  the infiniteness of $\sD(K,r,\pi)$ for $r\geq 2$ and $\pi=t$ in Proposition \ref{infinite}.  

\subsection{Defining conditions of $\sD(K,r,\pi)$}
In number field case,  $\sA(k,g,\ell)$ is  defined by the  equivalent conditions (RT-1), (RT-2), and (RT-3) in Section 1.
The equivalence of them follows  from the criterion of N{\'e}ron-Ogg-Shafarevich  and the next group theoretic lemma:
\begin{lem}[Rasmussen and Tamagawa {\cite[Lemma 3.4]{RT2}}]\label{group}
Let $\bF$ be a finite field of characteristic $\ell$.
Suppose $G$ is a profinite group, $N \subset G$ is a pro-$\ell$ open normal subgroup, and 
$C=G{/}N$ is a finite cyclic subgroup with $\#C | \#\bF^\times$.
Let $V$ be an $\bF$-vector space of dimension $r$ on which $G$ acts continuously.
Fix a group homomorphism $\chi_0:G \ra \bF^\times$ with $\mathrm{Ker}(\chi_0)=N$.
Then there exists a filtration
\[
0=V_0 \subset V_1 \subset \cdots \subset V_r=V
\]
such that each $V_s$ is $G$-stable and $\dim_\bF V_s=s$ for  any $0 \leq s \leq r$.
Moreover, for each $1 \leq s \leq r $, the $G$-action on each quotient $V_s{/}V_{s-1}$ is given by
$\chi_0^{i_s}$ for some integer $i_s$ satisfying $0 \leq i_s < \#C$.
\end{lem}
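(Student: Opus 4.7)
The plan is to prove Lemma \ref{group} by induction on $r = \dim_\bF V$, constructing the one-dimensional $G$-stable step $V_1$ at each stage and then passing to the quotient $V/V_1$. So the whole task reduces to producing a single $G$-stable line $V_1 \subset V$ on which $G$ acts through $\chi_0^{i_1}$ for some $0 \leq i_1 < \#C$.

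To produce $V_1$, I would first exploit the pro-$\ell$ hypothesis on $N$. Since $V$ is a finite $\bF_\ell$-vector space and $G$ acts continuously, some open normal subgroup $N' \trianglelefteq G$ acts trivially on $V$; passing to $N/(N \cap N')$ one gets a \emph{finite} $\ell$-group acting on the $\ell$-group $V$, so the standard fixed-point theorem for $\ell$-groups yields $V^N \neq 0$. Because $N$ is normal, $V^N$ is $G$-stable, and the $G$-action on it factors through $C = G/N$.

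Next I would decompose $V^N$ as a $C$-representation. The assumption $\#C \mid \#\bF^\times$ forces $\#C$ to be coprime to $\ell$, so Maschke's theorem applies and $V^N$ splits as a direct sum of simple $\bF[C]$-modules. The same hypothesis ensures $\bF$ contains all $\#C$-th roots of unity, and since $C$ is cyclic, every simple $\bF[C]$-module is one-dimensional, realized by a character $C \to \bF^\times$. The character $\chi_0$, having kernel exactly $N$, induces an injection $\bar{\chi}_0 : C \hookrightarrow \bF^\times$ whose image is the unique subgroup of $\bF^\times$ of order $\#C$; consequently every character $C \to \bF^\times$ is a power $\bar{\chi}_0^{i}$ with $0 \leq i < \#C$. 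Picking any irreducible summand of $V^N$ gives the desired $V_1$ with $G$-action $\chi_0^{i_1}$.

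The induction then concludes: the quotient $V/V_1$ is a $G$-representation of dimension $r-1$ that inherits the same hypotheses (same $G$, $N$, $C$, $\chi_0$), so by the inductive hypothesis it admits a filtration of the prescribed type, which lifts to the filtration on $V$. I do not expect a real obstacle here; the only subtle point is handling the fixed-point argument in the profinite setting, but that is immediate once one reduces modulo an open normal subgroup acting trivially on the finite-dimensional $V$. Everything else is Maschke plus the cyclic structure of $C$.
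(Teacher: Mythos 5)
Your proof is correct and is essentially the argument from Rasmussen--Tamagawa \cite[Lemma 3.4]{RT2}, which the paper cites rather than reproduces: reduce to a finite quotient via continuity, use the $\ell$-group fixed-point theorem to get $V^N\neq 0$, note $V^N$ is $G$-stable with $C$-action, use $\gcd(\#C,\ell)=1$ together with $\bF^\times$ being cyclic containing $\mathrm{Im}(\bar\chi_0)$ as the unique subgroup of order $\#C$ to produce a line acted on by a power of $\chi_0$, and induct on $V/V_1$. This matches the route the paper defers to; no gaps.
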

\begin{rem}
In \cite{RT2}, this lemma is proved when $\bF=\bF_\ell$.
The general case can be proved in the same way.
\end{rem}

As an analogue, we give two conditions which are equivalent to (D1)$+$(D2).
Let $\phi$ be a rank-$r$ Drinfeld module over $K$ and let $\pi \in A$ be a monic irreducible element.
Consider the field  $K(\phi[\pi^\infty]):=K({\bigcup}_{n \geq 1}\phi[\pi^n])$  generated by all $\pi$-power torsion points of $\phi$, so that it coincides with the fixed subfield 
of $K^{\rm sep}$ by  the kernel of $\rep{\phi}{\pi}:G_K \ra {\rm GL}_r(A_\pi)$.
Recall that the mod $\pi$ Carlitz character $\chi_\pi:G_K \ra {\rm GL}_{\bF_\pi}(\cC[\pi]) \simeq \bF_\pi^\times$ is an analogue of the mod $\ell$ cyclotomic character.
For the field $L:=K(\phi[\pi]) \cap K(\cC[\pi])$, we can prove the next proposition in the same way  as the abelian variety case.

\begin{prop}\label{DRTcondi}
The following conditions are equivalent.
\begin{itemize}
\setlength{\leftskip}{0.5cm}
\item[(DR-1)]  $K(\phi[\pi^\infty])/L$ is a pro-$p$ extension which is unramified at any finite place of $L$ not lying above $\pi$,
\item[(DR-2)] $\phi$ has good reduction at any finite place of $K$ not lying above $\pi$ and $K(\phi[\pi])/L$ is a $p$-extension,
\item[(DR-3)] $\phi$ satisfies {\rm (D1)} and {\rm (D2)}.
\end{itemize}
\end{prop}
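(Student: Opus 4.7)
The plan is to follow the pattern of the equivalence of (RT-1), (RT-2), (RT-3) in the abelian variety case, substituting Proposition \ref{NOS} for the classical N\'eron-Ogg-Shafarevich criterion, the Carlitz character $\chi_\pi$ for the cyclotomic character, and Lemma \ref{group} for its number field analogue. Throughout, set $M := K(\phi[\pi])$.

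First I would show (DR-3) $\Rightarrow$ (DR-2). The good reduction half is immediate from (D1). For the $p$-extension part, (D2) places $\mrep{\phi}{\pi}(G_K)$ in the Borel subgroup $B \subset \GL{r}{\bF_\pi}$ of upper triangular matrices; its unipotent radical $U$ is a $p$-group, and $B/U$ embeds into the torus $(\bF_\pi^\times)^r$ via the diagonal characters $\chi_\pi^{i_1}, \ldots, \chi_\pi^{i_r}$. Let $H := \bigcap_{s=1}^{r} \Ker{\chi_\pi^{i_s}} \subset G_K$; since each $\chi_\pi^{i_s}$ factors through $\Gal{K(\cC[\pi])}{K}$, we have $G_{K(\cC[\pi])} \subset H$, so the fixed field $K'$ of $H$ satisfies $K' \subset M \cap K(\cC[\pi]) = L$. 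Then $\Gal{M}{L} \subset \Gal{M}{K'}$ is identified via $\mrep{\phi}{\pi}$ with a subgroup of $U$, and is therefore a $p$-group.

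The equivalence (DR-2) $\Leftrightarrow$ (DR-1) is a formal bootstrap. The kernel of the reduction map $\GL{r}{A_\pi} \thra \GL{r}{\bF_\pi}$ is pro-$p$, so $\Gal{K(\phi[\pi^\infty])}{M}$ is pro-$p$; hence $K(\phi[\pi^\infty])/L$ is pro-$p$ if and only if $M/L$ is a $p$-extension. The two ramification statements translate via Proposition \ref{NOS}, combined with the fact from Example \ref{Carlitz} that $K(\cC[\pi])/K$, and therefore $L/K$, is unramified at finite places of $K$ not lying above $\pi$.

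Finally, for (DR-2) $\Rightarrow$ (D2), I would apply Lemma \ref{group} to $G := \Gal{M}{K}$ acting on $V := \phi[\pi]$, with $N := \Gal{M}{L}$ (a $p$-group by (DR-2)) and $C := \Gal{L}{K}$, which is cyclic of order dividing $[K(\cC[\pi]):K] \mid q_\pi - 1 = \#\bF_\pi^\times$. The decisive input required by the lemma is a character $\chi_0 : G \to \bF_\pi^\times$ with $\Ker{\chi_0} = N$; the choice $\chi_0 := \chi_\pi^d$, where $d := [K(\cC[\pi]):L]$, works because $\chi_\pi(G_L) \subset \bF_\pi^\times$ is the unique cyclic subgroup of order $d$, whence $\Ker{\chi_\pi^d} = G_L$. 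Lemma \ref{group} then furnishes a filtration of $V$ with successive quotients $\chi_\pi^{d i_s}$ for integers $0 \leq i_s < \#C$, and the exponents $d i_s$ satisfy $0 \leq d i_s < d \cdot \#C = [K(\cC[\pi]):K] \leq q_\pi - 1$, recovering (D2). The main obstacle I anticipate is precisely this last step, pinning down a power of $\chi_\pi$ with kernel $G_L$ so that Lemma \ref{group} yields characters that are themselves powers of $\chi_\pi$; once that identification is in place, the remainder is formal.
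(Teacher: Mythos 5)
Your proof is correct and follows the same overall strategy as the paper: the formal equivalence of (DR-1) and (DR-2) via the pro-$p$ kernel of $\GL{r}{A_\pi} \twoheadrightarrow \GL{r}{\bF_\pi}$, Takahashi's good-reduction criterion (Proposition \ref{NOS}), and the unramifiedness of $K(\cC[\pi])/K$ outside $\pi$; the deduction of (D2) from (DR-2) via Lemma \ref{group}; and the Borel/unipotent observation for (DR-3)$\Rightarrow$(DR-2) (the paper phrases it directly as $\mrep{\phi}{\pi}(N)$ landing in a Sylow $p$-subgroup, whereas you introduce the auxiliary field $K'$ and show $K'\subseteq L$, but these are the same argument). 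The one place where you genuinely diverge is in choosing the character $\chi_0$ to feed into Lemma \ref{group}: the paper simply writes $\chi_0 = \chi_\pi|_G$ and $N=\Ker(\chi_\pi|_G)$, which is only literally meaningful when $K(\cC[\pi])\subseteq K(\phi[\pi])$ so that $\chi_\pi$ factors through $G = \Gal{K(\phi[\pi])}{K}$ — and the paper's own Remark immediately after the proposition shows this inclusion can fail for Drinfeld modules. Your choice $\chi_0 := \chi_\pi^{d}$ with $d=[K(\cC[\pi]):L]$, together with the verification that $\Ker(\chi_\pi^d)=G_L$ (using that $\chi_\pi(G_L)$ is the unique order-$d$ subgroup of the cyclic group $\bF_\pi^\times$ and that $G_{K(\cC[\pi])}\subseteq G_L$), produces a character that genuinely factors through $G$ with kernel $N$ and still yields diagonal entries that are powers of $\chi_\pi$ in the required range $[0,q_\pi-1]$. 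This is a real precision gain over the paper's wording; it patches the one spot where the paper's notation does not quite parse in general.
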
 

\begin{rem}
Unlike the abelian variety case, the field $K(\phi[\pi])$ may not contain $K(\cC[\pi])$.
For example, for $x \in \bF_q^\times {\setminus} \{  1 \}$, consider the rank-one Drinfeld module 
$\phi$ over $F$ determined by $\phi_t=t + x\tau$ and suppose $q \neq 2$.
Then the fields $F(\phi[t])$ and $F(\cC[t])$ are generated by the roots of $t + xT^{q-1}$ and $t + T^{q-1}$, respectively.
By Kummer theory, we see that $F(\phi[t]) \neq F(\cC[t])$,  so that  $F(\phi[t]) \not\supset F(\cC[t])$.
\end{rem}

\begin{proof}[Proof of Proposition \ref{DRTcondi}]
Since the kernel of ${\rm GL}_r(A_\pi) {\thra} {\rm GL}_r(\bF_\pi)$ is a pro-$p$ group, the extension $K(\phi[\pi^\infty])/K(\phi[\pi])$ is always pro-$p$.
The extension  $K(\cC[\pi])/K$ is unramified at any finite place of $K$ not lying above $\pi$ 
(Example \ref{Carlitz}).
Hence the conditions (DR-1) and (DR-2) are equivalent by Proposition \ref{NOS}.
Suppose that (DR-2) holds.
Then the condition  (DR-3) immediately follows from  Lemma \ref{group} for $G={\rm Gal}(K(\phi[\pi])/K)$, $\chi_0=\chi_\pi|_G$, 
$N=\mathrm{Ker}(\chi_\pi|_G)={\rm Gal}(K(\phi[\pi])/L)$, and $V=\phi[\pi]$.
Conversely, if (DR-3) holds, then the image $\mrep{\phi}{\pi}(N)$ of $N={\rm Gal}(K(\phi[\pi])/L)$
is contained in 
\[
\left\{
\begin{pmatrix}
1 &  & {\large*}\\
 & \ddots & \\
 & & 1
\end{pmatrix}
\in {\rm GL}_r(\bF_\pi) \right\},
\]
which is a Sylow $p$-subgroup of ${\rm GL}_r(\bF_\pi)$.
Since $\mrep{\phi}{\pi}|_G$ is injective, we see that  $K(\phi[\pi])/L$ is a $p$-extension.
\end{proof}

\begin{rem}
The original conjecture of Rasmussen and Tamagawa is formulated for abelian varieties of arbitrary dimension, and so we would like to formulate its function field analogue for  some higher dimensional objects (recall that Drinfeld modules are analogues of elliptic curves). 
In \cite{Anderson}, Anderson introduced objects called {\it $t$-motives} as analogues of abelian varieties of higher dimensions, which are also generalizations of Drinfeld modules.
In fact the category of Drinfeld modules is anti-equivalent to that of $t$-motives of dimension one.
It is known that $t$-motives have the notions of good reduction and Galois representation attached to their $\pi$-torsion points (see, for example \cite{Gar}), so that we can consider the conditions (D1) and (D2) for $t$-motives.
Moreover, Proposition \ref{DRTcondi} is also generalized to $t$-motives since Galois criterion of good reduction for $t$-motives holds. 
Therefore  the set $\sM(K,d,r,\pi)$ of isomorphism classes of $d$-dimensional $t$-motives over $K$ of rank $r$ satisfying the Rasmussen-Tamagawa type conditions can be defined and the following question makes sense: is the set $\sM(K,d,r,\pi)$ empty for any $\pi$ with sufficiently large degree?
\end{rem}

\subsection{Non-emptiness of $\sD(K,r,\pi)$}

In this subsection, giving a concrete example, we prove the following theorem:
\begin{thm}\label{nonempty}
If $r$ divides  $[K:F]_{\rm i}$, then the set $\sD(K,r,\pi)$ is never empty for any $\pi$.
\end{thm}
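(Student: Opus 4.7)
Since $r$ divides $[K:F]_{\rm i}=[K:K_{\rm s}]$, a power of $p$, we have $r=p^{\nu}$. The extension $K/K_{\rm s}$ is purely inseparable of degree $p^{\mu}:=[K:F]_{\rm i}$, so $K\subseteq K_{\rm s}^{1/p^{\mu}}$; but $K_{\rm s}^{1/p^{\mu}}=K_{\rm s}(t^{1/p^{\mu}})$ (any $p^{\mu}$-th root of an element of $K_{\rm s}$ is expressible in terms of $t^{1/p^{\mu}}$, since the constant field of $K_{\rm s}$ is perfect), and both sides have degree $p^{\mu}$ over $K_{\rm s}$, forcing equality. In particular $t^{1/r}\in K$; set $\theta:=t^{1/r}$ and define
$$\Phi_t := (\theta+\tau)^{r}\in K\{\tau\}.$$
Expanding in the skew polynomial ring (using $\tau c=c^{q}\tau$), the $\tau^{0}$-coefficient is $\theta^{r}=t$ and the $\tau^{r}$-coefficient is $1$, so $\Phi$ is a rank-$r$ Drinfeld module over $K$ whose leading coefficient is a unit at every finite place. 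Hence (D1) holds for every $\pi$.

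The key observation for (D2) is that $\theta+\tau$ is the Carlitz element for the twisted $A$-field structure $\iota'\colon A\to K$, $t\mapsto\theta$; writing $\mathcal{C}'$ for the resulting rank-$1$ Drinfeld module, so that $\mathcal{C}'_a = a(\theta+\tau)$, one has $\Phi_a=\mathcal{C}'_{a(t^{r})}$ for all $a\in A$. For any monic irreducible $\pi\in A$, using that $\bF_q$ is perfect and $r=p^{\nu}$, one has $\pi(t^{r})=\tilde\pi(t)^{r}$ in $A$, where $\tilde\pi$ is the monic irreducible of the same degree obtained by replacing each coefficient of $\pi$ with its $r$-th root in $\bF_q$. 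Thus $\Phi[\pi]=\mathcal{C}'[\tilde\pi^{\,r}]$ as $G_{K}$-modules. Being a free $A/\tilde\pi^{\,r}$-module of rank one, $\mathcal{C}'[\tilde\pi^{\,r}]$ carries the Galois-stable filtration
$$0\subset \tilde\pi^{\,r-1}\mathcal{C}'[\tilde\pi^{\,r}]\subset \cdots \subset \mathcal{C}'[\tilde\pi^{\,r}]$$
whose successive quotients are all isomorphic to $\mathcal{C}'[\tilde\pi]$, a $1$-dim $\bF_{\tilde\pi}$-space with $G_K$-action by a character $\chi'_{\tilde\pi}$. Each term is $\mathcal{C}'$-stable, hence $\Phi$-stable, and has $\bF_\pi$-dimension (via $\Phi$) one greater than the previous; so this induces a $G_K$-stable $\bF_\pi$-filtration of $\Phi[\pi]$ with $1$-dimensional graded pieces.

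Finally, I would identify $\chi'_{\tilde\pi}$, under the canonical identification $\bF_{\tilde\pi}=\bF_\pi$ inside $\bar\bF_q$, as a power of $\chi_\pi|_{G_K}$. For a finite place $v$ of $K$ above a monic irreducible $\pi_0\notin\{\pi,t\}$, $\mathcal{C}'$ has good reduction at $v$, and since $\bar\theta\in\bF_v$ is a root of $\tilde\pi_0$ (playing the role that $\bar t$ does for the standard Carlitz module) the Frobenius endomorphism of $\bar{\mathcal{C}}'$ equals $\mathcal{C}'_{\tilde\pi_0^{\,f_{v|\pi_0}}}$, yielding $\chi'_{\tilde\pi}(\mathrm{Frob}_v)\equiv \tilde\pi_0^{\,f_{v|\pi_0}}\pmod{\tilde\pi}$. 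Combining with $\chi_\pi(\mathrm{Frob}_v)\equiv \pi_0^{\,f_{v|\pi_0}}\pmod\pi$ from Example \ref{Carlitz}, the polynomial identity $\pi_0(t^{r})=\tilde\pi_0(t)^{r}$ reduced mod $\tilde\pi$, and the observation that the residues $\bar s:=t\bmod\tilde\pi$ and $\bar t:=t\bmod\pi$ satisfy $\bar s^{\,r}=\bar t^{\,q^{j}}$ for a fixed $j$ (because $\bar s^{\,r}$ is a root of $\pi$), a direct calculation gives $\chi'_{\tilde\pi}(\mathrm{Frob}_v)=\chi_\pi(\mathrm{Frob}_v)^{q^{j}/r}$, where $q^{j}/r$ is a well-defined integer mod $q_\pi-1$ since $\gcd(r,q_\pi-1)=1$. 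By Chebotarev, $\chi'_{\tilde\pi}|_{G_K}=\chi_\pi^{\,i}|_{G_K}$ for $i:=q^{j}/r$, verifying (D2) with $i_1=\cdots=i_r=i$. \textbf{The main obstacle} is this final identification: the preceding steps are formal skew-algebra manipulations, but extracting the correct power $i$ requires careful bookkeeping of the three residue-field identifications $A/\pi$, $A/\tilde\pi$, $\bF_v$ and exploitation of the Frobenius-twist relation $\bar s^{\,r}=\bar t^{\,q^{j}}$.
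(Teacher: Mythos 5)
Your construction of $\Phi_t=(t^{1/r}+\tau)^r$, the identity $\Phi_a=\cC'_{a(t^r)}$ (this is the paper's $\Phi_a=\cC'_{\hat a^r}$ in different notation, since $\hat a^r=a(t^r)$ when $r$ is a $p$-power), and the $G_K$-stable filtration of $\Phi[\pi]=\cC'[\tilde\pi^r]$ by $\tilde\pi$-power torsion are exactly the paper's construction (Proposition \ref{main3}, with $V_s=\cC'[\hat\pi^s]$ and $\hat\pi=\tilde\pi$). So the route is the paper's up to the last step, and (D1) is handled the same way.

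Where you diverge is in identifying the character on the graded pieces, and this is where the real gap sits. The paper settles it by a short direct computation: for $\sigma\in G_K$ and $\lambda\in V_1=\cC'[\hat\pi]$, pick $a_\sigma\equiv\chi_\pi(\sigma)\pmod\pi$, use the bijection $\cC'[\hat\pi]\to\cC[\pi]$, $\lambda\mapsto\lambda^r$ (Lemma \ref{relation}), and compare $r$-th powers of $\sigma(\lambda)$ and $\chi_\pi(\sigma)^i\cdot\lambda$; injectivity of the $r$-power map on $K^{\rm sep}$ then gives the action by $\chi_\pi^i$ with $ir\equiv 1\pmod{q_\pi-1}$. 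Your Chebotarev argument is workable in principle, but it is not self-contained as written: the exponent $j$ in $\bar s^{\,r}=\bar t^{\,q^j}$ depends on the choice of embeddings $A/\pi,A/\tilde\pi\hookrightarrow\bar\bF_q$ (there is no canonical one), and you must further check that this choice is compatible with the $\bF_\pi$-module structure on $V_1$ induced by $\Phi$ (namely $a\cdot\lambda=\cC'_{a(t^r)}(\lambda)$), since otherwise the resulting $i=q^j/r$ is only determined up to a Frobenius twist of the diagonal characters. Choosing $\beta^r=\alpha$ (so $j=0$) and verifying this compatibility does recover the paper's $i\equiv r^{-1}\pmod{q_\pi-1}$, but that verification is precisely the ``bookkeeping'' you flag as the obstacle and must actually be carried out before the proof closes. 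The paper's argument avoids Chebotarev and the residue-field identifications entirely, which is what makes it shorter and cleaner.
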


If $r=1$, then Theorem \ref{nonempty} is trivial since the Carlitz module $\cC$ satisfies both (D1) and (D2).
Assume that $r \geq 2$ and $[K:F]_{\rm i}$ is divisible by $r$, so that $r$ is a $p$-power. 
Now
the $r$-power map $A \ra A;a \mapsto a^r$ is an injective ring homomorphism.

For any $a =\sum x_nt^n \in A$ with $x_n \in \bF_q$, set $\hat{a}:=\sum x_n^{1/r}t^n$.
Then we see that $a \mapsto \hat{a}$ is a ring automorphism of $A$ and that the  map
$A \ra A; a \mapsto \hat{a}^r$ is an injective $\bF_q$-algebra homomorphism.

\begin{lem}\label{ps}
Set $[K:F]_{\rm i}=p^\nu$. Then $K_{\rm s}=K_{}^{p^{\nu}}$.
\end{lem}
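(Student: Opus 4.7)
The plan is to establish the equality $K_{\rm s}=K^{p^\nu}$ by a two-sided comparison: first show the inclusion $K^{p^\nu}\subseteq K_{\rm s}$, then show both sides have index exactly $p^\nu$ in $K$, which forces equality.

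For the inclusion, I would use the defining property of $K_{\rm s}$. Since $K/K_{\rm s}$ is purely inseparable of degree $[K:K_{\rm s}]=[K:F]_{\rm i}=p^\nu$, any $\alpha\in K$ has a minimal polynomial over $K_{\rm s}$ of the form $X^{p^e}-c$ with $p^e\mid p^\nu$, hence $e\le \nu$. Therefore $\alpha^{p^\nu}=(\alpha^{p^e})^{p^{\nu-e}}\in K_{\rm s}$, and $K^{p^\nu}\subseteq K_{\rm s}$.

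The main work is the degree computation $[K:K^{p^\nu}]=p^\nu$, which I would deduce from the classical fact that $[K:K^p]=p$ for a function field $K$ of transcendence degree one over a perfect constant field. Here the constant field $\bF_K$ is finite, hence perfect, so $K$ admits a separating transcendence basis $\{s\}$ over $\bF_K$; then $K=K^p(s)$ with $s$ transcendental over $K^p$, giving $[K:K^p]=p$. The $p^k$-power map is a field isomorphism $K\overset{\sim}{\to} K^{p^k}$ sending $K^p$ onto $K^{p^{k+1}}$, so $[K^{p^k}:K^{p^{k+1}}]=p$ for every $k\ge 0$; telescoping gives $[K:K^{p^\nu}]=p^\nu$. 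Combined with the tower $K^{p^\nu}\subseteq K_{\rm s}\subseteq K$ and $[K:K_{\rm s}]=p^\nu$, this forces $[K_{\rm s}:K^{p^\nu}]=1$, i.e.\ $K_{\rm s}=K^{p^\nu}$.

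The main obstacle is nothing more than invoking the separating transcendence basis (which requires perfectness of the constant field $\bF_K$) to get $[K:K^p]=p$. Once that standard fact is available, the rest is pure degree arithmetic together with the elementary exponent bound for purely inseparable extensions. No use of the Drinfeld-module structure or of earlier results in the paper is needed here; the lemma is purely about the arithmetic of the function field $K/F$.
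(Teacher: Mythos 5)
Your proof is correct and follows essentially the same route as the paper: first $K^{p^\nu}\subseteq K_{\rm s}$ via the exponent bound for purely inseparable extensions, then $[K:K^{p^\nu}]=p^\nu$ by telescoping $[K^{p^k}:K^{p^{k+1}}]=p$, and finally a degree count in the tower $K^{p^\nu}\subseteq K_{\rm s}\subseteq K$. The only cosmetic difference is that the paper cites Rosen's Proposition~7.4 for the degree-$p$ steps whereas you derive them from the separating transcendence basis; both are the same standard fact.
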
 

\begin{proof}

Since $K$ is a purely inseparable extension of $K_{\rm s}$ of degree $p^{\nu}$, the field 
$K_{}^{p^{\nu}}$ is contained in  $K_{\rm s}$.
Consider the sequence of fields $K \supset K^p \supset \cdots \supset K^{p^{\nu}} $.
Proposition 7.4 of \cite{Ros} implies that   each extension $K^{p^n}/K^{p^{n+1}}$
 is of degree $p$.
Hence $[K:K_{}^{p^{\nu}}]=p^{\nu}=[K:K_{\rm s}]$, which means that $K_{\rm s}=K_{}^{p^{\nu}}$.
\end{proof}

Since $r$ divides $[K:F]_{\rm i}$, Lemma \ref{ps} implies that $K$ contains the field 
$F^{{1/r}}$.
In particular the $r$-th root $t^{1/r}$ of $t$ is contained in $K$, so that 
we have a new injective $A$-field structure $\iota:A \ra K$ defined by $\iota(t)=t^{1/r}$.
Define the rank-one Drinfeld module 
\[
\cC':A \ra K\{ \tau \}
\]
 over the   $A$-field $(K,\iota)$ by 
$\cC'_t=t^{1/r}+\tau$.

Set ${}^{(r)}\mu:=\sum c_i^r \tau^i$ for any  $\mu=\sum c_i\tau^i \in K\{ \tau \}$, which defines a ring homomorphism $K\{ \tau \} \ra K\{ \tau \}$.
Then we can relate $\cC'$ with the Carlitz module $\cC$ as follows:

\begin{lem}\label{relation}
{\rm (1)} For any $a \in A$, ${}^{(r)}\cC'_{\hat{a}}=\cC_a$. 

{\rm (2)} For any  element $\lambda \in \cC'[\hat{a}]$, there exists a unique   $\delta \in \cC[a]$ such that $\lambda=\delta^{1/r}$.
\end{lem}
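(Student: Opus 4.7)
The plan is to prove both parts via a direct computation that exploits $r = p^\nu$ so that the $r$-th power map is a power of Frobenius and is compatible with the structures on both $A$ and $K\{\tau\}$.

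For (1), I would reformulate the claim: define $\psi : A \to K\{\tau\}$ by $\psi_a := {}^{(r)}\cC'_{\hat{a}}$ and aim to show $\psi = \cC$. Since $A = \bF_q[t]$, it suffices to check that $\psi$ is an $\bF_q$-algebra homomorphism with $\psi_t = \cC_t$. The operation $\mu \mapsto {}^{(r)}\mu$ is a ring endomorphism of $K\{\tau\}$: additivity follows from $(c+d)^r = c^r + d^r$ in characteristic $p$, while multiplicativity reduces, via the commutation $\tau c = c^q \tau$, to the identity $(d^r)^{q^i} = (d^{q^i})^r$. Combined with the fact (noted in the paper) that $a \mapsto \hat{a}$ is a ring automorphism of $A$ and that $\cC'$ is itself a ring homomorphism, one deduces that $\psi$ is multiplicative and additive; $\bF_q$-linearity is the computation $\psi_c = {}^{(r)}(c^{1/r}) = c$ for $c \in \bF_q$ (using that $x \mapsto x^r$ is a bijection of $\bF_q$). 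Finally the direct check
\[
\psi_t \;=\; {}^{(r)}\cC'_t \;=\; {}^{(r)}(t^{1/r} + \tau) \;=\; t + \tau \;=\; \cC_t
\]
identifies $\psi$ with $\cC$, giving (1).

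For (2), given $\lambda \in \cC'[\hat{a}]$, I would set $\delta := \lambda^r$ and verify that $\cC_a(\delta) = 0$. Writing $\cC'_{\hat{a}} = \sum_i c_i \tau^i$, part (1) yields $\cC_a = \sum_i c_i^r \tau^i$, and hence
\[
\cC_a(\delta) \;=\; \sum_i c_i^r \lambda^{r q^i} \;=\; \Bigl(\sum_i c_i \lambda^{q^i}\Bigr)^{\!r} \;=\; \bigl(\cC'_{\hat{a}}(\lambda)\bigr)^r \;=\; 0,
\]
once again using that $r = p^\nu$ makes the $r$-th power map additive in characteristic $p$. Uniqueness of $\delta$ is immediate since $x \mapsto x^r$ is injective on $K^{\mathrm{sep}}$ (it is a power of Frobenius), forcing $\delta = \lambda^r$.

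I do not anticipate any real obstacle here; the whole lemma is essentially a compatibility statement between the Frobenius twist on coefficients in $K\{\tau\}$ and the Frobenius twist $a \mapsto \hat{a}$ on $A$. The only slightly delicate point is organizing the verification that ${}^{(r)}$ is a ring endomorphism of $K\{\tau\}$, but this reduces to the commutation relation in one line.
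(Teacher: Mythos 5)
Your proof is correct and follows the same route as the paper: reduce part (1) to checking on the ring generators $\bF_q$ and $t$ after observing that ${}^{(r)}$, $\cC'$, and $a\mapsto\hat a$ are all ring homomorphisms, and for part (2) set $\delta=\lambda^r$ and compute $\cC_a(\lambda^r)=(\cC'_{\hat a}(\lambda))^r=0$. The only cosmetic difference is that you spell out the verification that ${}^{(r)}$ is a ring endomorphism of $K\{\tau\}$, which the paper asserts without proof just before the lemma, and the paper additionally remarks that $\lambda\mapsto\lambda^r$ is a bijection $\cC'[\hat a]\to\cC[a]$ by a cardinality count, which is not needed for the statement as given.
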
 

\begin{proof}
Clearly ${}^{(r)}\cC'_{\hat{x}}=x=\cC_x$ for any $x \in \bF_q$ and 
${}^{(r)}\cC'_{\hat{t}}={}^{(r)}\cC'_{t}=\cC_t$.
Hence for any $a=\sum x_nt^n$, 
\[
{}^{(r)}\cC'_{\hat{a}}={}^{(r)}\left( \sum x_n^{1/r} (\cC'_t)^n \right)=\sum x_n (\cC_t)^n=\cC_a.
\]
For any $\lambda \in \cC'[\hat{a}]$, we see that 
\[
0=\left(\cC'_{\hat{a}}(\lambda) \right)^r={}^{(r)}\cC'_{\hat{a}}(\lambda^r)=\cC_a(\lambda^r),
\]
so that $\lambda^r \in \cC[a]$ and  we have the injective homomorphism $\cC'[{\hat{a}}] \ra \cC[a];\lambda \mapsto \lambda^r$ of finite groups.
Since $\#\cC'[\hat{a}] $ is equal to $\#\cC[a]$  by $\deg(\hat{a})=\deg(a)$, it is a bijection.
\end{proof}

Now define $\Phi_a:=\cC'_{\hat{a}^r}=(\cC'_{\hat{a}})^r \in K\{ \tau  \}$ for any $a \in A$.
Then
 by construction it gives  an $\bF_q$-algebra homomorphism
\[
\Phi: A \ra K\{ \tau \}, 
\]
which is determined by $\Phi_t=(t^{1/r} + \tau)^r$.
Since $\iota(\hat{a}^r)=a$ holds, $\Phi$ is  a rank-$r$ Drinfeld module over $K$.
Moreover it has good reduction at every finite place $v$ of $K$ since $v(t^{1/r}) \geq 0$.

By the following proposition, we see that $[\Phi] \in \sD(K,r,\pi)$, which implies Theorem \ref{nonempty}.

\begin{prop}\label{main3}
Let $i$ be the positive integer satisfying $ir \equiv 1 \pmod {q_\pi-1}$ and $i < q_\pi-1$.
Then the mod $\pi$ representation attached to $\Phi$ is of the form
\[
\mrep{\Phi}{\pi}\simeq 
\begin{pmatrix}
\chi_\pi^i &  & {\large*}\\
 & \ddots & \\
 & & \chi_\pi^i
\end{pmatrix}.
\]
\end{prop}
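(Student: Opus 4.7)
The plan is to exhibit a $G_K$-stable $\bF_\pi$-filtration on $\Phi[\pi]$, check that every successive quotient is the same one-dimensional character, and then compute that character explicitly.

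First observe that, by the very definition of $\Phi$,
\[
\Phi_\pi = \cC'_{\hat{\pi}^r} = \bigl(\cC'_{\hat{\pi}}\bigr)^r \quad \text{in } K\{\tau\},
\]
where the second equality is composition. Hence, as subsets of $K^{\mathrm{sep}}$, the $\pi$-torsion $\Phi[\pi]$ coincides with $\cC'[\hat{\pi}^r]$. Setting $V_k := \cC'[\hat{\pi}^k]$ for $0 \le k \le r$ gives a tower
\[
0 = V_0 \subset V_1 \subset \cdots \subset V_r = \Phi[\pi].
\]
Each $V_k$ is $G_K$-stable (because $\cC'$ is defined over $K$) and is $\Phi$-stable since $\Phi_a = \cC'_{\hat{a}^r}$ and $\cC'_{\hat{\pi}^k}$ commute in $K\{\tau\}$ (the underlying elements of $A$ commute). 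A cardinality count shows $\dim_{\bF_\pi} V_k = k$, so each graded piece $V_k/V_{k-1}$ is a one-dimensional $\bF_\pi$-representation of $G_K$. Moreover, the operator $\cC'_{\hat{\pi}^{k-1}}$ induces a $G_K$-equivariant, $\bF_\pi$-linear isomorphism $V_k/V_{k-1} \xrightarrow{\sim} V_1$: injectivity is immediate from the definition of the $V_j$, and a cardinality count gives surjectivity. Thus it suffices to identify the character of $V_1$.

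For $V_1 = \cC'[\hat{\pi}]$, Lemma \ref{relation}(2) supplies a $G_K$-equivariant bijection
\[
V_1 \lra \cC[\pi], \qquad \lambda \mapsto \lambda^r.
\]
The identity $(\mu(y))^r = {}^{(r)}\mu(y^r)$ (valid in $K\{\tau\}$ because $r$ is a $p$-power) combined with Lemma \ref{relation}(1) yields, for every $\alpha \in A$,
\[
\bigl(\Phi_\alpha(\lambda)\bigr)^r = {}^{(r)}\Phi_\alpha(\lambda^r) = \bigl({}^{(r)}\cC'_{\hat{\alpha}}\bigr)^r(\lambda^r) = (\cC_\alpha)^r(\lambda^r) = \cC_{\alpha^r}(\lambda^r).
\]
Therefore, if $\chi\colon G_K \to \bF_\pi^\times$ is the character describing the $\Phi$-action of $G_K$ on $V_1$, then
\[
\cC_{\chi(\sigma)^r}(\lambda^r) = \bigl(\Phi_{\chi(\sigma)}(\lambda)\bigr)^r = \sigma(\lambda)^r = \sigma(\lambda^r) = \cC_{\chi_\pi(\sigma)}(\lambda^r),
\]
which forces $\chi^r = \chi_\pi$. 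Because $r$ is a $p$-power while $\#\bF_\pi^\times = q_\pi-1$ is prime to $p$, the $r$-th power map on $\bF_\pi^\times$ is a bijection; the unique $r$-th root of $\chi_\pi$ in the character group is $\chi_\pi^i$ with $ir \equiv 1 \pmod{q_\pi-1}$, which gives the claimed form of $\bar{\rho}_{\Phi,\pi}$.

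The main obstacle is the bookkeeping in the last step: the single set $\cC'[\hat{\pi}]$ carries two one-dimensional module structures over non-canonically isomorphic residue fields, one via $\cC'$ over $\bF_{\hat{\pi}}$ and the other via $\Phi$ over $\bF_\pi$, and one must carefully transport the Galois action through the bijection $\lambda \mapsto \lambda^r$ using the compatibility of ${}^{(r)}$ with $\cC'$ and $\cC$. Once the identity $\chi^r = \chi_\pi$ is in hand, the coprimality of $r$ with $q_\pi-1$ makes the extraction of $\chi = \chi_\pi^i$ immediate.
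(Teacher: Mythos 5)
Your proposal is correct and takes essentially the same route as the paper: the identical filtration $V_k=\cC'[\hat\pi^k]$, the same $G_K$-equivariant quotient maps $V_k/V_{k-1}\xrightarrow{\sim}V_1$ via $\cC'_{\hat\pi^{k-1}}$, and the same transport of the Galois action through the $r$-power bijection $V_1\to\cC[\pi]$. The only cosmetic difference is in the end-game: you derive $\chi^r=\chi_\pi$ abstractly and then invert the $r$-th power map on $\bF_\pi^\times$, whereas the paper lifts $\chi_\pi(\sigma)$ to $a_\sigma\in A$ and verifies $\sigma(\lambda)=\Phi_{a_\sigma^i}(\lambda)$ directly by comparing $r$-th powers.
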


\begin{proof}
It suffices to prove that   $\bar{\rho}_{\Phi,\pi}^{\rm ss} = (\chi_\pi^i)^{\oplus r}$.
For each  $1 \leq s \leq r$,  set
\[
V_s:=\{ \lambda \in {}_\Phi K^{\rm sep} ; \cC'_{\hat{\pi}^{s}}(\lambda)=0\}.
\]
For any $a \in A$ and $\lambda \in V_s$, we see that  $\Phi_a(\lambda) \in V_s$ since 
$
\cC_{\hat{\pi}^s}'(\Phi_a(\lambda))= \cC_{\hat{\pi}^s}'(\cC_{\hat{a}^r}'(\lambda))=\cC_{\hat{a}^r}'(\cC_{\hat{\pi}^s}'(\lambda))=0.
$
Hence
$V_s$ is an $A$-submodule of ${}_\Phi K^{\rm sep}$ with the natural $G_K$-action.
Moreover  $\Phi_\pi(\lambda)=0$ for any $\lambda \in V_s$, so that $V_s$ is an $\bF_\pi$($=A/\pi A$)-vector space.
Here  $\Phi[\pi]=V_r$ by the definition of $\Phi$.
Then we obtain  the filtration 
\[
0 =V_0 \subset V_1 \subset V_2 \subset \cdots \subset V_r=\Phi[\pi]
\]
of  $G_K$-stable $\bF_\pi$-subspaces of $\Phi[\pi]$.
Now  the map  $V_s\ra V_1;\lambda \mapsto \cC'_{\hat{\pi}^{s-1}}(\lambda)$ induces a $G_K$-equivariant isomorphism $V_s{/}V_{s-1} \cong V_1$.
Since  $V_1=\cC'[\hat{\pi}]$ (as a set) and  $\deg(\pi)=\deg(\hat{\pi})$, we have 
$\#V_1=q_{\hat{\pi}}=q_\pi (=\# \bF_\pi)$.
Hence $\dim_{\bF_\pi}V_1=1$ and 
 the semisimplification of $\Phi[\pi]$ (as an $\bF_\pi[G_K]$-module) is 
$\Phi[\pi]^{\rm ss}  =\oplus_{s=1}^rV_s{/}V_{s-1} \cong V_1^{\oplus r}$.
For any  $\sigma \in G_K$ and $\lambda \in V_1$, we prove $\sigma(\lambda)=\chi_\pi(\sigma)^i \cdot \lambda$ as follows.
Take an element $a_\sigma \in A$ satisfying $a_\sigma \equiv \chi_\pi(\sigma) \pmod \pi$.
By Lemma \ref{relation} (2),  $\lambda=\delta^{1/r}$ for some $\delta \in \cC[\pi]$. 
Then 
\[
\sigma(\lambda)^r=\sigma(\delta)=\chi_{\pi}(\sigma)\cdot\delta=\cC_{a_\sigma}(\delta).
\]
Now the $\bF_\pi$-vector space structure of $V_1$ is determined by  $\Phi$ and so 
$\chi_\pi(\sigma)^i \cdot \lambda = \Phi_{a_\sigma^i}(\lambda) = \cC'_{\hat{a}_\sigma^{ir}}(\lambda)$.
Since 
$ir \equiv 1 \pmod {q_{\hat{\pi}}-1}$ holds, 
 we have
 $\hat{a}_{\sigma}^{ir}  \equiv \hat{a}_{\sigma}   \pmod {\hat{\pi}}  $.
This implies
 $\cC'_{\hat{a}_\sigma^{ir}}(\lambda) =\cC'_{\hat{a}_\sigma}(\lambda)$.
By Lemma \ref{relation} (1), we obtain
\[
\left( \chi_\pi(\sigma)^i\cdot\lambda \right)^r
=\left( \cC'_{\hat{a}_\sigma}(\lambda) \right)^r
={}^{(r)}\cC'_{\hat{a}_\sigma}(\lambda^r)
=\cC_{a_\sigma}(\delta)=\sigma(\lambda)^r.
\]
Since the $r$-power map is injective,  we have
$\sigma(\lambda)=\chi_\pi(\sigma)^i \cdot \lambda$.
Hence 
 the $G_K$-action on $V_1$ is given by $\chi_\pi^i$.
\end{proof}


\begin{rem}
Let  $u$ be a finite place of  $K$ above $\pi$. Now $r$ divides $e_{u|\pi}$ by assumption and set $e=e_{u|\pi}/r$.
Since $ir \equiv 1 \pmod {q_\pi-1}$, we see that 
\[
\chi_\pi^i|_{I_{K_u}}=(\omega_{1,K_u})^{i \cdot e_{u|\pi}} = (\omega_{1,K_u})^{i \cdot r \cdot e}=(\omega_{1,K_u})^e.
\]
Hence the set of tame inertia weights of $\mrep{\Phi}{\pi}|_{I_{K_u}}$ is $\mathrm{TI}_{{K_u}}(\mrep{\Phi}{\pi})=\{e\}$.
\end{rem} 

\subsection{Infiniteness of $\sD(K,r,t)$}
Finally, for $\pi=t$, we construct an infinite subset of $\sD(K,r,t)$.
In number field case, the set $\sA(k,g,\ell)$ is always finite because of the Shafarevich conjecture proved by Faltings \cite{Fal},  which states that there exist only finitely many  isomorphism classes of abelian varieties over fixed $k$ with fixed dimension $g$ which have good reduction outside a fixed finite set of finite places of $k$.
However, the Drinfeld module analogue of it  does not hold:

\begin{ex}\label{a}
For any $a\in A$, consider  the  rank-2 Drinfeld module $\phi^{(a)}:A \ra F\{ \tau \}$  given by 
$
\phi^{(a)}_t=t + a\tau + \tau^2.
$
It is  easily seen that   $\phi^{(a)}$ has good reduction at any  finite place of $F$.
If $\phi^{(a)}$ is isomorphic to  $\phi^{(a')}$ for some $a' \in A$ over $F$,
then there exists an element $c \in F$ such that $c\phi_t^{(a')} = \phi_t^{(a)}c$, so that 
\[
\phi_t^{(a')}=t + a'\tau + \tau^2=t + c^{q-1}a\tau + c^{q^2-1}\tau^2.
\]
This means that $c \in \bF_q^\times$ and hence $a'=c^{q-1}a=a$.
Therefore the set of isomorphism classes 
$
\{ [\phi^{(a)}] ; a \in A \}
$ is infinite.
\end{ex}

Let $W$ be a $G_K$-stable one-dimensional $\bF_q$-vector space contained  in  $K^{\rm sep}$ and write 
$\kappa_W: G_K \ra \bF_q^\times$ for the character attached to $W$.
Set
 $P_W(T):=\prod_{\lambda \in W}(T-\lambda)$, which is an $\bF_q$-linear  polynomial of the form
\[
P_W(T)=T^q+c_WT,  \ \ \ c_W:= \Bigl({\prod}_ {\lambda \in W\setminus \{0 \}}-\lambda \Bigr) \in K^\times
\]
by \cite[Corollary 1.2.2]{Goss}.
For any $c \in K^\times$, denote by $\bar{c} \in K^\times{/}(K^\times)^{q-1}$ the class of $c$ and by 
$\kappa_{(c)}:G_K \ra \bF_q^\times$ the character corresponding to  $\bar{c}$ by the map 
$K^\times{/}(K^\times)^{q-1} \overset{\sim}{\ra} \mathrm{Hom}(G_K,\bF_q^\times)$ of Kummer theory.

\begin{lem}\label{k}
For the above element  $c_W \in K^\times$, the character $\kappa_{(-c_W)}$ coincides with $\kappa_W$.
\end{lem}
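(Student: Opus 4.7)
The plan is to compute $c_W$ explicitly in terms of a generator of $W$ and then recognize $-c_W$ as a $(q-1)$-th power whose root sees the character $\kappa_W$ directly.

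First I would fix a nonzero element $\lambda_0 \in W$, so that $W = \bF_q \lambda_0$ as an $\bF_q$-vector space. Then
\[
c_W = \prod_{\lambda \in W \setminus \{0\}}(-\lambda) = \prod_{x \in \bF_q^\times}(-x\lambda_0) = (-1)^{q-1}\lambda_0^{q-1}\prod_{x \in \bF_q^\times}x.
\]
Two elementary facts clean this up: $(-1)^{q-1} = 1$ in characteristic $p$ (automatic in characteristic $2$, and $q-1$ is even in odd characteristic), and the finite-field analogue of Wilson's theorem gives $\prod_{x \in \bF_q^\times}x = -1$. Combining these yields $c_W = -\lambda_0^{q-1}$, equivalently $-c_W = \lambda_0^{q-1}$.

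Next I would invoke Kummer theory. The character $\kappa_{(-c_W)}: G_K \to \bF_q^\times$ corresponding to the class of $-c_W \in K^\times/(K^\times)^{q-1}$ is characterized by
\[
\kappa_{(-c_W)}(\sigma) = \frac{\sigma(\mu)}{\mu}, \qquad \mu \in K^{\rm sep}\ \text{with}\ \mu^{q-1} = -c_W.
\]
By the identity just derived, $\mu = \lambda_0$ is such a choice. Hence $\kappa_{(-c_W)}(\sigma) = \sigma(\lambda_0)/\lambda_0$ for every $\sigma \in G_K$.

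Finally, since $W$ is $G_K$-stable with attached character $\kappa_W$, the element $\lambda_0 \in W$ satisfies $\sigma(\lambda_0) = \kappa_W(\sigma)\lambda_0$, so $\kappa_{(-c_W)}(\sigma) = \kappa_W(\sigma)$ for all $\sigma$, giving the equality of characters. There is no real obstacle here; the only point requiring any care is the sign computation $c_W = -\lambda_0^{q-1}$, where the factor of $-1$ coming from $\prod_{x \in \bF_q^\times} x$ is precisely what makes it $-c_W$ (rather than $c_W$) that appears in the statement.
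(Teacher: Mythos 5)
Your proof is correct and follows essentially the same route as the paper's: both reduce to the identity $\lambda^{q-1}=-c_W$ for nonzero $\lambda\in W$ and then read off $\kappa_{(-c_W)}(\sigma)=\sigma(\lambda)/\lambda=\kappa_W(\sigma)$ via Kummer theory. The only (immaterial) difference is that you derive $\lambda_0^{q-1}=-c_W$ by explicitly computing the product $\prod_{\lambda\in W\setminus\{0\}}(-\lambda)$ with Wilson's theorem, whereas the paper simply notes that any nonzero $\lambda\in W$ is a root of $T^{q-1}+c_W$ (from $P_W(\lambda)=\lambda(\lambda^{q-1}+c_W)=0$).
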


\begin{proof}
Since $\lambda^{q-1}=-c_W$ for any $\lambda \in W{\setminus}\{0\}$, the character $\kappa_{(-c_W)}$ is given by
$
\kappa_{(-c_W)}(\sigma)={\sigma(\lambda)/\lambda}=\kappa_W(\sigma)
$
for any $\sigma \in G_K$.
\end{proof} 

Identify $\bF_t=A/tA=\bF_q$.
Then $\cC[t]$ is a one-dimensional $\bF_q$-subspace of   $K^{\rm sep}$ and   $P_{\cC[t]}(T)=T^q + t T$ by the definition of $\cC$.
By  Lemma \ref{k}, we see that $\chi_t=\kappa_{(-t)}$.
Note that $\chi_t^i=\kappa_{((-t)^i)}$ for any integer $i$.

Take $r$ elements $c_1, \ldots, c_r \in K^\times$.
For any  $1 \leq s \leq r$, define 
$f_s(\tau):=(\tau + c_s)(\tau +c_{s-1}) \cdots (\tau+c_1) \in K\{ \tau \}$
and set $W_s:=\Ker {f_s:K^{\rm sep} \ra K^{\rm sep} }$, which is a $G_K$-stable $s$-dimensional $\bF_q$-subspace of $K^{\rm sep}$.
Thus we obtain the filtration
\[
0=W_0 \subset W_1 \subset \cdots \subset W_r
\]
of $\bF_q[G_K]$-modules. 

\begin{lem}\label{c}
The $\bF_q$-linear representation $\bar{\rho}:G_K \ra \GL{\bF_q}{W_r} \simeq \GL{r}{\bF_q}$ is of the form
\[
\bar{\rho} \simeq
\begin{pmatrix}
\kappa_{(-c_1)} & * & \cdots & *\\
 & \kappa_{(-c_2)}  & \ddots  & \vdots \\
 &  & \ddots  & * \\
 & &  &\kappa_{(-c_{r})}  
\end{pmatrix}.
\]
\end{lem}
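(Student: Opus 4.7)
The plan is to use the filtration $0=W_0\subset W_1\subset\cdots\subset W_r$ directly: because each $W_s$ is $G_K$-stable, choosing a basis $(e_1,\dots,e_r)$ of $W_r$ with $(e_1,\dots,e_s)$ a basis of $W_s$ automatically makes $\bar{\rho}$ upper triangular, and the $s$-th diagonal entry is the character giving the $G_K$-action on the one-dimensional $\bF_q$-vector space $W_s/W_{s-1}$. So the whole problem reduces to identifying these successive quotients as Galois modules.

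First I would establish that each $W_s$ has $\dim_{\bF_q} W_s = s$. By definition $W_s=\Ker(f_s)$ where $f_s\in K\{\tau\}$ is an $\bF_q$-linear polynomial of $\tau$-degree $s$, hence of usual degree $q^s$ as a polynomial in the variable of $\bG_{a,K}$; since $c_s\neq 0$ for each $s$, the polynomial is separable and therefore has exactly $q^s$ roots in $K^{\mathrm{sep}}$, i.e.\ $\#W_s=q^s$.

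Next, for each $1\leq s\leq r$ I would consider the $\bF_q$-linear map
\[
f_{s-1}:W_s\longrightarrow K^{\mathrm{sep}},\qquad \lambda\longmapsto f_{s-1}(\lambda).
\]
It is $G_K$-equivariant because $f_{s-1}$ has coefficients in $K$. Its kernel is exactly $W_{s-1}$, and for any $\lambda\in W_s$ we have $(\tau+c_s)f_{s-1}(\lambda)=f_s(\lambda)=0$, so the image lies in $V_s:=\Ker(\tau+c_s)$, the set of $\mu\in K^{\mathrm{sep}}$ with $\mu^q+c_s\mu=0$. This is a $G_K$-stable one-dimensional $\bF_q$-subspace of $K^{\mathrm{sep}}$ (again by separability of $T^q+c_sT$), so a dimension count gives a $G_K$-equivariant isomorphism $W_s/W_{s-1}\xrightarrow{\sim}V_s$.

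Finally, the polynomial $P_{V_s}(T)=T^q+c_sT$ has the coefficient $c_{V_s}=c_s$ in the notation preceding Lemma~\ref{k}. Applying Lemma~\ref{k} to $W=V_s$ gives $\kappa_{V_s}=\kappa_{(-c_s)}$, so the $G_K$-action on $W_s/W_{s-1}$ is $\kappa_{(-c_s)}$, which is exactly the claim. There is no real obstacle here; the only thing one has to be careful about is confirming that $f_{s-1}$ really lands in the one-dimensional space $\Ker(\tau+c_s)$ and is surjective onto it, which the dimension count settles.
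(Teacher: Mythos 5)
Your proof is correct and follows essentially the same route as the paper: identify each successive quotient $W_s/W_{s-1}$ with $\Ker(\tau+c_s)\subset K^{\mathrm{sep}}$ as a Galois module via the map induced by $f_{s-1}$, then invoke Lemma~\ref{k}. The paper leaves the isomorphism $W_s/W_{s-1}\cong\Ker(\tau+c_s)$ unexplained, whereas you supply the explicit map and dimension count, but there is no difference in method.
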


\begin{proof}
For any $1 \leq s \leq r$, the quotient $W_s{/}W_{s-1}$ is isomorphic to $\Ker {\tau + c_s:K^{\rm sep} \ra K^{\rm sep}}$ as an $\bF_q[G_K]$-module.
Hence  each $W_s{/}W_{s-1}$ is embedded into $K^{\rm sep}$.
By Lemma \ref{k}, the action of $G_K$ on $W_s{/}W_{s-1}$ is given by $\kappa_{(-c_s)}$.
\end{proof}

Fix $r$ integers $i_1, \ldots , i_r$ satisfying   $\sum_{s=1}^r i_s=1$.
For any ${\bf m}=(m_1, \ldots , m_r) \in \bZ^{r}$ satisfying   $\sum_{s=1}^rm_s=0$, 
consider the $\bF_q$-algebra homomorphism $\phi^{\bf m}:A \ra K\{ \tau \}$ given by
\[
\phi^{\bf m}_t=(-1)^{r-1}\prod_{s=1}^r(\tau - (-t)^{k_s}),
\]
where $k_s=i_s + m_s(q-1)$ for any $1 \leq s \leq r$.
Now  $\sum_{s=1}^rk_s=1$, so that the constant term of $\phi_t^{\bf m}$ is  $(-1)^{r-1}\prod_{s=1}^r( -(-t)^{k_s})=(-1)^{2r}t=t$ and hence 
$\phi^{\bf m}$ is 
a rank-$r$ Drinfeld module  over $K$.

\begin{prop}\label{m}
The isomorphism class $[\phi^{\bf m}]$ is contained in $\sD(K,r,t)$.
Moreover,  
the mod $t$ representation attached to $\phi^{\bf m}$ is of the form
\[
\mrep{\phi^{\bf m}}{t} \simeq 
\begin{pmatrix}
\chi_t^{i_1} & * & \cdots & *\\
 & \chi_t^{i_2}  & \ddots  & \vdots \\
 &  & \ddots  & * \\
 & &  &\chi_t^{i_r}  
\end{pmatrix},
\]
where $i_1, \ldots, i_r$ are the integers fixed as above.
\end{prop}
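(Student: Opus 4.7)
The plan is to verify (D1) and (D2) directly from the explicit factorization of $\phi^{\bf m}_t$, using Lemma \ref{c} to extract the upper triangular structure of the mod $t$ representation.

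For (D1), I would fix a finite place $v$ of $K$ not lying above $t$. Since $v(t)=0$, every factor $(-t)^{k_s}$ lies in $\cO_{K_v}^\times$. Expanding $\phi^{\bf m}_t=(-1)^{r-1}\prod_{s=1}^r(\tau-(-t)^{k_s})$ in $K\{\tau\}$ and using that the $q$-power Frobenius twist preserves zero valuation, every coefficient of $\phi^{\bf m}_t=t+c_1\tau+\cdots+c_r\tau^r$ lies in $\cO_{K_v}$, while the leading coefficient $c_r=(-1)^{r-1}$ lies in $\cO_{K_v}^\times$. Hence $\phi^{\bf m}$ itself already has good reduction at $v$; no change of model is required.

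For (D2), I would set $c_s:=-(-t)^{k_s}\in K^\times$, so that $\tau-(-t)^{k_s}=\tau+c_s$, and order the defining product as
\[
\phi^{\bf m}_t=(-1)^{r-1}(\tau+c_r)(\tau+c_{r-1})\cdots(\tau+c_1)=(-1)^{r-1}f_r(\tau),
\]
in the notation preceding Lemma \ref{c}. Then $\phi^{\bf m}[t]=\Ker{\phi^{\bf m}_t}=\Ker{f_r}=W_r$ as subsets of $K^{\rm sep}$, and the $\bF_t$-vector space structure inherited from the Drinfeld module coincides with the ambient $\bF_q$-structure on $W_r$, because $\bF_t=A/tA=\bF_q$ and $\phi^{\bf m}_c=c$ for $c\in\bF_q$ (since $\phi^{\bf m}$ is an $\bF_q$-algebra homomorphism). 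Lemma \ref{c} then produces a $G_K$-stable filtration $0=W_0\subset W_1\subset\cdots\subset W_r=\phi^{\bf m}[t]$ whose successive quotients afford the diagonal characters $\kappa_{(-c_s)}=\kappa_{((-t)^{k_s})}$.

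The final step will be to identify each diagonal character with $\chi_t^{i_s}$. From Lemma \ref{k} and the identity $\chi_t=\kappa_{(-t)}$ recorded before Lemma \ref{c}, one has $\kappa_{((-t)^{k_s})}=\chi_t^{k_s}$. Because $\chi_t$ takes values in $\bF_t^\times=\bF_q^\times$, a group of order $q-1$, and $k_s=i_s+m_s(q-1)$, this equals $\chi_t^{i_s}$, yielding the desired form of $\mrep{\phi^{\bf m}}{t}$. I do not foresee any serious obstacle; the only mildly delicate points are arranging the factor ordering of the defining product to match that of Lemma \ref{c}, and the essentially formal compatibility check between the $\bF_t$- and $\bF_q$-structures on the $t$-torsion.
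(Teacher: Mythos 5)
Your proof is correct and follows essentially the same route as the paper's: good reduction away from $t$ from the explicit integral coefficients and unit leading coefficient, then Lemma \ref{c} applied to the factorization $\phi^{\bf m}_t=(-1)^{r-1}(\tau-(-t)^{k_r})\cdots(\tau-(-t)^{k_1})$, and finally $\kappa_{((-t)^{k_s})}=\chi_t^{k_s}=\chi_t^{i_s}$ since $k_s\equiv i_s\pmod{q-1}$. The only differences are cosmetic: you spell out that $(-t)^{k_s}\in\cO_{K_v}^\times$ because $v(t)=0$ (which, given that the $k_s$ may be negative, is a slightly sharper observation than the paper's ``$-t\in\cO_{K_v}$''), and you make explicit the compatibility of the $\bF_t$- and $\bF_q$-structures on $\phi^{\bf m}[t]$, both of which the paper leaves implicit.
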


\begin{proof}
For any   finite place  $v$ of $K$ not lying above $t$,
since  $-t \in \cO_{K_v}$ and the leading coefficient of $\phi_t^{\bf m}$ is $(-1)^{r-1}$, we see that $\phi^{\bf m}$ has good reduction at $v$.
Now $\phi^{\bf m}[t]$ coincides with the kernel of
$\prod_{s=1}^r(\tau-(-t)^{k_s})$.
Applying Lemma \ref{c} to $f_s=(\tau-(-t)^{k_s})\cdots(\tau-(-t)^{k_1})$, we see that $\mrep{\phi^{\bf m}}{t}$ is given as above since $\kappa_{((-t)^{k_s})}=\chi_t^{k_s}=\chi_t^{i_s}$ for any $1 \leq s \leq r$.
\end{proof}

\begin{prop}\label{infinite}
If $r \geq 2$, then the set $\sD(K,r,t)$ is infinite.
\end{prop}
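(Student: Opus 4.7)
The plan is to exhibit an explicit one-parameter subfamily of the Drinfeld modules constructed in Proposition~\ref{m} and to show that infinitely many of its members represent distinct $K$-isomorphism classes. Since $\pi=t$ gives $q_\pi=q$, I choose integers $i_1,\ldots,i_r$ with $0\leq i_s\leq q-1$ and $\sum_s i_s=1$ (for instance $i_1=1$, $i_s=0$ for $s\geq 2$). For every $N\in\bZ$, set ${\bf m}_N:=(N,-N,0,\ldots,0)\in\bZ^r$, which is well-defined because $r\geq 2$ and has coordinate sum zero. Proposition~\ref{m} then gives $[\phi^{{\bf m}_N}]\in\sD(K,r,t)$ for every integer $N$, so it suffices to prove that $N\mapsto [\phi^{{\bf m}_N}]$ has infinite image.

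The first step is a direct computation of the $\tau$-coefficient $a_1^{(N)}$ of $\phi^{{\bf m}_N}_t$. Writing $k_s:=i_s+(m_N)_s(q-1)$ and expanding the product $\phi^{{\bf m}_N}_t=(-1)^{r-1}\prod_{s=1}^r(\tau-(-t)^{k_s})$ by repeated use of the commutation rule $\tau c=c^q\tau$, one obtains
\[
a_1^{(N)}=\sum_{s=1}^r(-t)^{e_s(N)},\qquad e_s(N):=(q-1)\sum_{j<s}k_j+1-k_s.
\]
For our choice of ${\bf m}_N$, the exponents $e_s(N)$ for $s\geq 3$ are independent of $N$, whereas $e_1(N)=1-i_1-N(q-1)$ and $e_2(N)=1-i_2+(q-1)i_1+Nq(q-1)$ are nonconstant affine functions of $N$ moving in opposite directions. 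Hence for $|N|$ large the minimum among $\{e_s(N)\}_{s=1}^r$ is attained uniquely and equals the $t$-adic valuation $v_t(a_1^{(N)})$; consequently $v_t(a_1^{(N)})$ is a strictly monotone affine function of $N$ outside a bounded interval, and in particular $\{v_t(a_1^{(N)}):N\in\bZ\}$ is infinite.

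The non-isomorphism step uses the standard isomorphism criterion. If $\phi^{{\bf m}_N}\cong\phi^{{\bf m}_{N'}}$ over $K$ via some $c\in K^\times$ with $c\phi^{{\bf m}_N}_t=\phi^{{\bf m}_{N'}}_t c$, then combining the identity $\tau^s c^{-1}=c^{-q^s}\tau^s$ with a coefficient comparison yields $a_s(\phi^{{\bf m}_{N'}})=a_s(\phi^{{\bf m}_N})\cdot c^{1-q^s}$ for $1\leq s\leq r$. Since the leading coefficient of every $\phi^{{\bf m}_M}_t$ is the constant $(-1)^{r-1}$, the case $s=r$ forces $c^{q^r-1}=1$; in particular $c$ is algebraic over $\bF_q$, so $v_t(c)=0$. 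The case $s=1$ then gives $v_t(a_1^{(N)})=v_t(a_1^{(N')})$, so any $N$ and $N'$ yielding distinct $v_t(a_1)$ give non-isomorphic Drinfeld modules. Combined with the previous paragraph this produces infinitely many distinct isomorphism classes in $\sD(K,r,t)$.

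The main technical obstacle is the careful non-commutative expansion of $\prod(\tau-(-t)^{k_s})$ needed to pin down the closed formula for $a_1^{(N)}$; once that is in hand, the valuation estimate and the constraint $c\in\mu_{q^r-1}$ coming from the leading coefficient make the non-isomorphism argument essentially formal.
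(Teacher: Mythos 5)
Your proof is correct and follows essentially the same strategy as the paper: use Proposition~\ref{m} to produce a one-parameter family $\{\phi^{{\bf m}_N}\}$ in $\sD(K,r,t)$, observe that the valuation at a place above $t$ of a fixed $\tau$-coefficient is unbounded along the family, and note that any $K$-isomorphism between members is given by a constant (because the leading coefficients are equal, forcing $c^{q^r-1}=1$), so the valuation of that coefficient is an isomorphism invariant. The only differences are cosmetic: the paper takes ${\bf m}=(-m,0,\ldots,0,m)$ and tracks the $\tau^{r-1}$-coefficient, whereas you take ${\bf m}_N=(N,-N,0,\ldots,0)$ and track the $\tau$-coefficient.
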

\begin{proof}
We construct an infinite subset of $\sD(K,r,t)$ as follows.
Fix $r$ integers  $i_1, \ldots, i_r$  satisfying $\sum_{s=1}^ri_s=1$.
For any positive integer $m$, consider  $(-m,0,\ldots,0,m) \in \bZ^r$ and  define $\phi^m:=\phi^{(-m,0,\ldots,0,m)}$, which is a Drinfeld module satisfying $[\phi^m] \in \sD(K,r,t)$ by Proposition \ref{m}.
Write  $\phi^m_t=t + c_1\tau + \cdots + c_{r-1}\tau^{r-1} + (-1)^{r-1}\tau^r \in K\{ \tau \}$. Then by construction the coefficient $c_{r-1}$ is given by 
\[
c_{r-1}=(-t)^{i_1-m(q-1)} +  (-t)^{i_r +m(q-1)} + \sum_{s=2}^{r-1}(-t)^{i_s}.
\]
For any finite place $u$ of $K$ above $t$, if $m$ is sufficiently large, then 
\[
u(c_{r-1})=(i_1-m(q-1)) u(-t)<0,
\]
hence we see that $u(c_{r-1}) \ra -\infty$ as $m \ra \infty$.
On the other hand, for two positive integers $m$ and $m'$,  if $\phi^{m'}$ is isomorphic to $\phi^m$, then $\phi^{m'}_t=x^{-1}\phi^m_tx$ for some $x \in \bF_K^\times$ by the same argument of Example \ref{a}.
These facts imply that  if $m'$ is sufficiently large, then $\phi^{m}$ and $\phi^{m'}$ are not isomorphic.
Therefore the subset $\{[\phi^m]; m \in \bZ_{>0} \}$ of $\sD(K,r,t)$ is infinite.
\end{proof}

\ \\

Department of Mathematics, Tokyo Institute of Technology

2-12-1 Oh-okayama, Meguro-ku, Tokyo 152-8551, Japan

{\it  E-mail address} : \email{\bf okumura.y.ab@m.titech.ac.jp}
\end{document}